\newcommand{\R}{\mathbb{R}}
\newcommand{\N}{\mathbb{N}}
\newcommand{\ball}{\operatorname{B}}
\newcommand{\lebe}{\operatorname{L}}
\newcommand{\sobo}{\operatorname{W}}
\newcommand{\hold}{\operatorname{C}}
\newcommand{\vari}{\operatorname{BV}}
\newcommand{\D}{\mathrm{D}}
\newcommand{\B}{\operatorname{B}}
\newcommand{\eps}{\varepsilon}
\newcommand{\bv}{\operatorname{BV}}
\newcommand{\wstar}{\stackrel{*}{\rightharpoonup}}
\newcommand{\nocontentsline}[3]{}
\let\origcontentsline\addcontentsline
\newcommand\stoptoc{\let\addcontentsline\nocontentsline}
\newcommand\resumetoc{\let\addcontentsline\origcontentsline}
\newlength{\leftstackrelawd}
\newlength{\leftstackrelbwd}
\def\leftstackrel#1#2{\settowidth{\leftstackrelawd}%
{${{}^{#1}}$}\settowidth{\leftstackrelbwd}{$#2$}%
\addtolength{\leftstackrelawd}{-\leftstackrelbwd}%
\leavevmode%
{\kern-.5\leftstackrelawd}{}\mathrel{\mathop{#2}\limits^{#1}}}
\newtheorem{defi}{Definition}
\newtheorem{lemma}[defi]{Lemma}
\newtheorem{satz}[defi]{Theorem}
\newtheorem{beisp}[defi]{Example}
\newtheorem{remark}[defi]{Remark}
\newtheorem{prop}[defi]{Proposition}
\numberwithin{defi}{section}
\numberwithin{equation}{section}
\pgfplotsset{compat=1.18}
\definecolor{seeblau}{RGB}{0, 169, 224}
\def\XXint#1#2#3{{\setbox0=\hbox{$#1{#2#3}{\int}$}
     \vcenter{\hbox{$#2#3$}}\kern-.5\wd0}}
\title[On the singular set of $\bv$ minimizers]{On the singular set of $\bv$ minimizers \\  for non-autonomous functionals}
\author[L. Fu\ss angel]{Lukas Fu\ss angel}
\address{Fachbereich Mathematik und Statistik, Universit\"{a}t Konstanz, Universit\"{a}tsstraße 10, 78464 Konstanz, Germany}
\email{lukas.fussangel@uni-konstanz.de}
\author[B. Priyasad]{Buddhika Priyasad}
\email{priyasad@uni-konstanz.de}
\author[P. Stephan]{Paul Stephan}
\email{paul.stephan@uni-konstanz.de}
\date{September 11, 2025}
\keywords{Sobolev regularity, dimension reduction, linear growth functionals, functions of bounded variation}
\subjclass[2020]{35B65, 35J47, 49N60}
\begin{document}
\begin{abstract}
{
We investigate regularity properties of minimizers for non-autonomous convex variational integrands $F(x, \D u)$ with linear growth, defined on bounded Lipschitz domains $\Omega \subset \R^n$. Assuming appropriate ellipticity conditions and Hölder continuity of $\D_zF(x,z)$ with respect to the first variable, we establish higher integrability of the gradient of minimizers and provide bounds on the Hausdorff dimension of the singular set of minimizers.
}
\end{abstract}
\maketitle
\setcounter{tocdepth}{1}
\tableofcontents

\section{Introduction}
\subsection{Convex variational integrals on $\bv$.}
Let $\Omega \subset \R^n$ be an open and bounded Lipschitz domain. In this paper, we study the regularity properties of minimizers of non-autonomous convex variational integrals of the form
\begin{align}\label{variationalproblem}
\mathscr F[u] = \mathscr F[u; \Omega]:= \int \limits_{\Omega} F(x, \D u) \dd{x}, \quad u \in \sobo^{1,1}_{u_0}(\Omega; \R^N),
\end{align}
where $\sobo^{1,1}_{u_0}(\Omega; \R^N)= u_0 + \sobo^{1,1}_0(\Omega; \R^N)$ for some Dirichlet boundary datum $u_0 \in \sobo^{1,1}(\Omega; \R^N)$. Here we have $F \in \mathrm{C}( \bar{\Omega} \times \R^{N \times n})$ with $F(x, \cdot) \in \hold^2(\R^{N\times n})$ for every $x \in \Omega$ and $F$ is an integrand of linear growth. This means that there are constants $0<c_0\le c_1 <\infty$ and $c_2 \ge 0$ such that
\begin{align}\label{eq:lineargrowth}
c_0 |z| - c_2 \le F(x, z) \le c_1 (1+|z|)
\end{align}
for all $z \in \R^{N \times n}$ and all $x \in \Omega$. \\

In order to obtain the existence of minimizers, it is necessary to relax the above functional to the space of functions of bounded variation. This is necessary because bounded sequences in $\sobo^{1, 1}(\Omega; \mathbb{R}^N)$ typically do not have weakly converging subsequences; rather, the gradients of minimizing sequences may concentrate. More precisely, since any minimizing sequence $(u_j)_j$ for \eqref{variationalproblem} is bounded in $\sobo^{1,1}(\Omega; \R^N)$ due to the linear growth assumption \eqref{eq:lineargrowth}, there is a subsequence $(u_{j_k})_k$ and a function $u \in \bv(\Omega; \R^N)$ such that $u_{j_k} \to u$ in $\lebe^{1}(\Omega; \R^N)$ and $\D u_{j_k} \wstar \D u$. Here, $\bv(\Omega; \R^N)$ is the set of all $u \in \lebe^1(\Omega; \R^N)$ such that the distributional derivative $\D u$ can be represented by a finite $\R^{N \times n}$-valued Radon measure on $\Omega$. However, there need not exist a subsequence of $(u_j)_j$ converging (weakly) in $\sobo^{1,1}(\Omega;\R^N)$.\\

In this situation, it is therefore natural to extend \eqref{variationalproblem} to $u \in \bv(\Omega; \R^N)$. In doing so, we extend by lower semicontinuity, for instance, using the Lebesgue-Serrin extensions (see, for example, \cite{Serrin59}) and define:
\begin{equation}\label{eq:relaxedFunctional}
\overline{\mathscr F}_{u_0}[u; \Omega]:= \inf \left\{ \liminf_{j \to \infty} \int \limits_\Omega F(x, \D u_j) \dd{x}:\begin{array}{l} (u_j)_j \subset \sobo^{1,1}_{u_0}(\Omega; \R^N), \\[1mm] u_j \to u \text{ in } \lebe^1(\Omega; \R^N)\end{array} \right\}.
\end{equation}
If $F$ is convex with respect to the second variable, then the relaxed functional coincides with the original functional $\mathscr F$ on $\sobo^{1,1}_{u_0}(\Omega; \R^N)$. In addition, it is lower semicontinuous with respect to convergence in $\lebe^{1}(\Omega;\R^{N})$. A \textit{generalized minimizer} (or \textit{$\bv$ minimizer}) is a function $u \in \bv(\Omega; \R^N)$ such that
\begin{equation}\label{eq:gen_min}
  \overline{\mathscr F}_{u_0}[u; \Omega] \le \overline{\mathscr F}_{u_0}[v; \Omega] \ \text{for all } v \in \bv(\Omega; \R^N).  
\end{equation}
We denote the set of generalized minimizers by $\operatorname{GM}(\mathscr{F}; u_0)$. In this scenario, it is possible to represent the relaxed functional \eqref{eq:relaxedFunctional} in terms of integrals. The origins of such integral representations can be traced back to the works of \textsc{Goffman \& Serrin} \cite{GoffmanSerrin64} and \textsc{Reshetnyak} \cite{Reshetnyak68}, and even extend to the more general quasiconvex context; see, for example,  \textsc{Ambrosio \& Dal Maso} \cite{AmbrosioDalMaso92}, \textsc{Fonseca \& M\"{u}ller} \cite{FonsecaMüller93} and \textsc{Kristensen \& Rindler} \cite{KristensenRindler10}. Based on these results, the integral representation  in our case reads as
\begin{align}\label{eq:integralrepresentation}
    \begin{split} \overline{\mathscr F}_{u_0}[u; \Omega]  = \int \limits_\Omega F(x, \nabla u) \dd{x} & + \int \limits_\Omega F^\infty\left( x , \frac{\dd{\mathrm{D}^su}}{\dd{|\mathrm{D}^su|}}\right ) \dd{|\mathrm{D}^su|}  \\ & + \int_{\partial \Omega} F^\infty(x, (u_0-u) \otimes \nu_{\partial \Omega}) \dd{\mathscr H^{n-1}},\qquad u\in\bv(\Omega;\R^{N}).
    \end{split}
\end{align}
Here, the terms $u$ and $u_0$ in the boundary integral are understood in the sence of traces,  and the distributional gradient $\D u$ is split into its parts according to the Lebesgue-Radon-Nikodým theorem as follows:
$$\D u = \D^a u + \D^s u = \nabla u \mathscr L^n + \frac{\dd{\D^su}}{\dd{|\D^su|}}|\D^su|.$$
In order to grasp the behaviour of the integrands at infinity, which corresponds to $\D u$ becoming singular with respect to $\mathscr{L}^{n}$, the last ingredient in \eqref{eq:integralrepresentation} is given by the  \textit{recession function}
$$F^\infty(x,z) = \limsup_{t \to \infty} \frac{F(x,tz)}{t},\qquad x\in\overline{\Omega},\;z\in\R^{N\times n}.$$
For more on the space $\bv(\Omega; \R^N)$ and the properties of $\bv$-functions, we refer the reader to Chapter \ref{sec:IntroBV}. The existence of generalized minimizers of \eqref{variationalproblem} in $\bv(\Omega; \R^N)$ is by now well-known, see, for example,  \cite[Theorem 2.8]{Schmidt15} and essentially follows from an application of the direct method of calculus of variations once the functional is relaxed in the sense of \eqref{eq:relaxedFunctional}. However, we are interested in the regularity properties of generalized minimizers, which we will treat in the next section.

\subsection{Context and main results}\label{sec:context}
In the context of $\bv$ minimizers of \eqref{variationalproblem} it is desirable to obtain integrability and differentiability results that go beyond $u \in \bv(\Omega; \R^N)$. The regularity theory for linear growth problems has been a very active field of research over the past decades, and  we single out two particular types of theorems obtained so far: one class are Sobolev regularity results, stating that $u \in \sobo^{1,q}_{\mathrm{loc}}(\Omega; \R^N)$ for some $q \ge 1$ holds for every generalized minimizer. The second class of results is concerned with Hölder continuity of the gradients of minimizers.\\

So far, there is no available Sobolev regularity result for minimizers of non-autonomous convex linear growth integrals. We give an affirmative answer to this open problem with our following main result:
\begin{satz}[Sobolev regularity]\label{maintheorem}
Let $F: \Omega \times \R^{N\times n} \to \R$ with $F \in \hold(\bar{\Omega}\times\R^{N \times n})$ and $F(x,\cdot) \in \hold^2(\R^{N \times n})$ for every $x \in \Omega$. Assume that $F$ satisfies the following conditions:
    \begin{enumerate}[label = (\textbf{H\arabic*}) ]
        \item $F$ has linear growth: There exist constants $0<c_0 \le c_1<\infty$ and $c_2 \ge 0$ with
        $$c_0|z| -c_2 \le F(x,z) \le c_1(1+|z|)$$
        for all $z \in \R^{N\times n}$ and $x \in \Omega$. \label{cond:4} 
        \item There are $\lambda >0$ and $\mu >1$ such that
        $$\langle \D^2_z F(x,z) \xi, \xi  \rangle \geq \lambda \frac{|\xi|^2}{(1+|z|^2)^{\frac{\mu}{2}}}, \quad z, \xi \in \R^{N\times n}, x \in \Omega.$$\label{cond:2}
        \item There is a constant $C>0$ and a H\"{o}lder exponent $\alpha \in (0,1)$ such that
        $$|\D_z F(x_0,z) - \D_z F(x,z)| \leq C |x-x_0|^\alpha, \quad z \in \R^{N \times n}, x_0, x \in \Omega.$$\label{cond:3}
    \end{enumerate}
   Then, for $1 < \mu < 1+ \frac{\alpha}{n}$, any generalized minimizer $u \in \mathrm{GM}(\mathscr F; u_0)$ of \eqref{variationalproblem} is of class $\sobo^{1,p}_{\mathrm{loc}}(\Omega; \R^N)$ for all $1 \leq p < \frac{(3-\mu)n}{2 n-\alpha}$.
\end{satz}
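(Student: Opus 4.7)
The plan is to combine a vanishing–viscosity regularization with a fractional difference–quotient Caccioppoli estimate tailored to the Hölder dependence of $\D_zF$ in $x$. For $\delta>0$ I would perturb $F_{\delta}(x,z):=F(x,z)+\delta(1+|z|^{2})$, obtaining a functional of quadratic growth whose unique minimizer $u_{\delta}\in\sobo^{1,2}_{u_{0}^{\delta}}(\Omega;\R^{N})$ (with $u_{0}^{\delta}$ a standard $\sobo^{1,2}$-approximation of $u_{0}$) enjoys the classical interior regularity $u_{\delta}\in\sobo^{2,2}_{\operatorname{loc}}(\Omega;\R^{N})$ in the non-degenerate case. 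A compactness argument based on \ref{cond:4}, the lower semicontinuity of \eqref{eq:relaxedFunctional}, and the integral representation \eqref{eq:integralrepresentation} produces, along a subsequence, a limit $u^{\ast}\in\operatorname{GM}(\mathscr{F};u_{0})$. Since \ref{cond:2} forces strict convexity of $F(x,\cdot)$, the absolutely continuous gradient $\nabla u$ is uniquely determined across $\operatorname{GM}(\mathscr{F};u_{0})$, so it suffices to prove the claimed integrability for $u^{\ast}$.

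The heart of the argument is a Caccioppoli inequality for translates of $u_{\delta}$. Fix a cutoff $\eta\in\hold_{c}^{\infty}(\Omega)$ and $h\in\R^{n}$ small, write $\tau_{h}u(x):=u(x+h)$, and subtract the Euler--Lagrange identity for $u_{\delta}$ from its shifted version, testing with $\eta^{2}(\tau_{h}u_{\delta}-u_{\delta})$. The algebraic decomposition
\[\D_{z}F(x{+}h,\nabla\tau_{h}u_{\delta})-\D_{z}F(x,\nabla u_{\delta})=\bigl[\D_{z}F(x{+}h,\nabla\tau_{h}u_{\delta})-\D_{z}F(x{+}h,\nabla u_{\delta})\bigr]+\bigl[\D_{z}F(x{+}h,\nabla u_{\delta})-\D_{z}F(x,\nabla u_{\delta})\bigr]\]
splits the estimate into an ellipticity contribution — which by \ref{cond:2} and the fundamental theorem of calculus is bounded below by $\lambda\int\eta^{2}|\nabla\tau_{h}u_{\delta}-\nabla u_{\delta}|^{2}(1+|\nabla u_{\delta}|^{2}+|\nabla\tau_{h}u_{\delta}|^{2})^{-\mu/2}\,\dd{x}$ — and a non-autonomous remainder controlled pointwise by $C|h|^{\alpha}$ through \ref{cond:3}. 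A weighted Young inequality either absorbs the Hölder error into the ellipticity weight or trades it against the $\bv$-bound from \ref{cond:4}; in either case one arrives at a Nikolskii-type estimate for the canonical nonlinear quantity $V(\nabla u_{\delta})$, where $V(z):=\int_{0}^{|z|}(1+s^{2})^{-\mu/4}\,\dd s\cdot z/|z|$ satisfies $|V(z)|^{2}\sim(1+|z|)^{2-\mu}$ and $|\nabla V(\nabla u)|^{2}\sim(1+|\nabla u|)^{-\mu}|\D^{2}u|^{2}$.

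Chaining the embedding of Nikolskii into Sobolev--Slobodeckij spaces with the fractional Sobolev embedding $\sobo^{s,2}\hookrightarrow\lebe^{2n/(n-2s)}$ yields higher integrability of $V(\nabla u_{\delta})$ uniformly in $\delta$; translating back via $|V|^{2}\sim(1+|\nabla u|)^{2-\mu}$ and optimizing the exponent tracking delivers the claimed bound $\nabla u_{\delta}\in\lebe^{p}_{\operatorname{loc}}$ for all $p<(3-\mu)n/(2n-\alpha)$. The condition $\mu<1+\alpha/n$ is precisely the threshold at which this exponent exceeds $1$, so one genuinely improves on the $\bv$ baseline. A Fatou/lower-semicontinuity passage carries the $\delta$-uniform bound to $u^{\ast}$ and hence, by uniqueness of the absolutely continuous gradient, to every generalized minimizer.

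I expect the main obstacle to be the balancing in the Caccioppoli step: after Young absorption the right-hand side naturally contains $(1+|\nabla u_{\delta}|)^{\mu}$, which is only $\lebe^{1}_{\operatorname{loc}}$ from \ref{cond:4} since $\mu>1$. Closing the estimate therefore requires either a finely tuned weighted Cauchy--Schwarz against the $\bv$-norm, or a short bootstrap in which each Sobolev gain feeds back into the right-hand side until the exponent saturates at $(3-\mu)n/(2n-\alpha)$; the hypothesis $\mu<1+\alpha/n$ is exactly the condition ensuring this scheme converges. A secondary difficulty is keeping all constants $\delta$-uniform, which constrains the choice of regularizing term and forces the boundary datum approximation $u_{0}^{\delta}$ to be compatible with the purely local Caccioppoli framework.
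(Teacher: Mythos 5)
Your overall scheme (viscosity regularization, a finite-difference Caccioppoli estimate exploiting \ref{cond:3} to produce an $|h|^{\alpha}$-decay, a $V$-function and Nikolski\u{\i}/Besov embeddings, then passage to the limit) is the same skeleton as the paper's proof, but there is a genuine gap at the very first reduction. You claim that, since \ref{cond:2} makes $F(x,\cdot)$ strictly convex, the absolutely continuous gradient $\nabla u$ is the same for all elements of $\operatorname{GM}(\mathscr F;u_0)$, and that it therefore ``suffices to prove the claimed integrability for $u^{\ast}$'', the limit of your viscosity approximations. The uniqueness of $\nabla u$ across generalized minimizers is indeed true, but it does not yield the theorem: membership in $\sobo^{1,p}_{\mathrm{loc}}(\Omega;\R^N)$ for an arbitrary $u\in\operatorname{GM}(\mathscr F;u_0)$ requires in addition that the interior singular part $\D^s u$ vanishes, and singular parts are \emph{not} shared among generalized minimizers --- the recession function $F^\infty(x,\cdot)$ is positively $1$-homogeneous, hence never strictly convex, which is exactly the source of non-uniqueness. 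Knowing $\nabla u^\ast\in\lebe^p_{\mathrm{loc}}$ and $\D^su^\ast=0$ says nothing about $\D^su$ for another minimizer $u$ differing from $u^\ast$ by a purely singular $\bv$-function. This is precisely why the paper does not use the plain stabilization $F+\delta(1+|z|^2)$ (which, as they note, ``would only lead to estimates for one generalized minimizer''), but instead fixes an arbitrary $u\in\operatorname{GM}(\mathscr F;u_0)$ and builds, via Ekeland's variational principle in $\sobo^{-1,1}(\Omega;\R^N)$, a sequence of almost-minimizers $u_k$ of stabilized functionals converging to \emph{that} $u$; the uniform $\lebe^q$-bounds on $\D u_k$ are then transferred to $u$ by Reshetnyak lower semicontinuity applied to $g(z)=|z|^q$ with $q>1$, whose recession is $+\infty$, which is what forces $\D^su=0$ for the prescribed minimizer. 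Without an approximation tied to the given minimizer (or an independent argument excluding interior singular parts for every element of $\operatorname{GM}(\mathscr F;u_0)$), your conclusion ``for every generalized minimizer'' does not follow.

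Two further points, less serious but worth fixing. First, the interior $\sobo^{2,2}_{\mathrm{loc}}$ regularity you attribute to $u_\delta$ is not available here: the $x$-dependence is only $\hold^{0,\alpha}$, so the Euler--Lagrange system cannot be differentiated (this is stressed in the paper), and the identity $|\nabla V(\nabla u_\delta)|^2\sim(1+|\nabla u_\delta|)^{-\mu}|\D^2u_\delta|^2$ presupposes second derivatives you do not have; everything must be phrased with finite differences, as you in fact do in the Caccioppoli step. Second, your exponent bookkeeping is inconsistent: the $\lebe^2$-based chain ($\operatorname{B}^{\alpha/2}_{2,\infty}\hookrightarrow\sobo^{s,2}\hookrightarrow\lebe^{2n/(n-2s)}$ applied to your $V$ with $|V|^2\sim(1+|z|)^{2-\mu}$) does not produce the threshold $\frac{(3-\mu)n}{2n-\alpha}$; the paper obtains that exponent by first converting the weighted $\lebe^2$-estimate into an $\lebe^1$-Nikolski\u{\i} bound for $V_\kappa(\D u_k)$ through Young's inequality, with $\kappa\ge\frac{\mu+1}{2}$ chosen so that the conjugate term $M_{h,s}^{(\mu+2(1-\kappa))/2}$ is controlled by the uniform $\sobo^{1,1}$-bound alone (this is also the clean resolution of the ``main obstacle'' you describe, with no bootstrap needed), and then uses $\operatorname{B}^{\alpha/2}_{1,\infty}\hookrightarrow\lebe^{\frac{2n}{2n-\alpha}-\delta'}$. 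If you intend to run the $\lebe^2$-based route you must justify the resulting (different) exponent and the uniformity in the approximation parameter; as written, the claimed constants and exponents do not match the argument sketched.
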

For autonomous integrands $F: \R^{N \times n} \to \R$ only depending on the gradient variable, Sobolev regularity theorems of this kind are well-known and several other results for linear growth functionals are available, see, for example, \cite{BeckBulicekGmeineder20,BeEiFr24,BeckSchmidt13,Bildhauer02gradient_estimates,Bildhauer03book,Bildhauer03two-dimensional,Gmeineder2017,Gmeineder20BD,GmeinederKristensen19BD}.

The existing results usually rely on the assumption of $\mu$-ellipticity, meaning the existence of $\mu >1$ such that 
\begin{align}\label{eq:mu-ellipticity}
\lambda \frac{|\xi|^2}{\left(1+|z|^2\right)^{\frac{\mu}{2}}} \le \langle \D_z^2 F(x,z) \xi, \xi \rangle\le \Lambda \frac{|\xi|^2}{\left(1+|z|^2\right)^{\frac{1}{2}}}, \quad z,\xi \in \R^{N \times n}, \, x \in \Omega,
\end{align}
for some constants $0<\lambda \le \Lambda <\infty$. Integrands satisfying $\mu$-ellipticity are a special case of the class of $(s, \mu,q)$-growth integrands introduced by \textsc{Bildhauer, Fuchs \& Mingione} \cite{BildhauerFuchsMingione01}, see also \cite{BildhauerFuchs01}. Note that $(\mu=1)$-elliptic integrands cannot be of linear growth.\\

This assumption on the degenerate ellipticity allows to obtain a Sobolev regularity result (see \cite{BeckSchmidt13,Bildhauer02gradient_estimates,Bildhauer03book}) for autonomous linear growth integrands $F: \R^{N \times n} \to \R$. More precisely, if $F$ is $\mu$-elliptic with $1<\mu \le 3$, then any bounded generalized minimizer belongs to $\sobo^{1,1}_{\mathrm{loc}}(\Omega; \R^N)$. If one drops the boundedness assumption, the same result stays true in the range $1< \mu < 1+\frac 2n$. The main ideas for these Sobolev regularity results go back to \textsc{Bildhauer} \cite{Bildhauer02gradient_estimates,Bildhauer03book} who proved the requisite gradient integrability for one generalized minimizer. Due to the non-uniqueness of generalized minimizers (which occurs because of the lack of strict convexity of the recession function $F^\infty$), a refinement due to \textsc{Beck \& Schmidt} \cite{BeckSchmidt13} allows obtaining regularity for all generalized minimizers in the respective ranges of $\mu$.\\

A key observation is that the growth behavior of $\D_z^2F(x,z)$ exhibits asymmetry, with different rates from above and below. This feature is characteristic of linear growth integrands and structurally parallels the class of $(p,q)$-growth integrands. These are functions satisfying the following conditions: There exist constants such that $c_0, c_1, c_2 >0$ and $0< \lambda \le \Lambda < \infty$:
$$c_0 |z|^p -c_2 \le F(x,z) \le c_1(|z|^q+1), $$
$$\lambda (1+|z|^2)^{\frac{p-2}{2}} |\xi|^2 \le \langle \D_z^2F(x,z)\xi, \xi \rangle \le \Lambda (1+|z|^2)^{\frac{q-2}{2}} |\xi|^2$$
for all $z, \xi \in \R^{N \times n}, \ x \in \Omega$ and exponents $1<p<q < \infty$. The ratio $\frac{p}{q}$ encapsulates the asymmetry in growth and is conceptually analogous to the ellipticity parameter $\mu$. In both cases, there is a discrepancy between the functionals natural space of definition and the larger space needed for compactness arguments. For strictly convex $(p,q)$-growth integrands, compactness ensures the existence of a unique minimizer in $\sobo^{1,p}$ similar to how $\bv$ is the compactness space for linear growth integrands. The main goal, however, is to establish $\sobo^{1,q}_{\mathrm{loc}}$-regularity, as this is the natural space for the functional. This parallels the challenge of proving $\sobo^{1,1}_{\mathrm{loc}}$-regularity for linear growth integrands.

Important regularity results for minimizers of $(p,q)$-growth functionals were established by \textsc{Esposito, Leonetti, and Mingione}, see \cite{EspositoLeonettiMingione99NoDEA,EspositoLeonettiMingione99JDE,EspositoLeonettiMingione02} in the autonomous case and \cite{EspositoLeonettiMingione04} for the non-autonomous case. For non-autonomous, strictly convex functionals, they showed that $\sobo^{1,q}_{\mathrm{loc}}$-regularity holds when $\frac{q}{p} <1+ \frac{\alpha}{n}$, where $\alpha$ is the Hölder continuity exponent with respect to the first variable. Their results are sharp, as demonstrated by counterexamples for $\frac{q}{p} > 1+ \frac{\alpha}{n}$. They also proved that minimizers of the relaxed functional in $\sobo^{1,p}$ exhibit the $\sobo^{1,q}_{\mathrm{loc}}$ regularity. These proofs employ similar fractional Sobolev estimates and use of finite differences. However, the linear growth case poses additional challenges due to the potential non-uniqueness of $\bv$ minimizers. This makes a more refined stabilization procedure necessary (see Chapter \ref{sec:EkelandConstruction}).

The bound $1+ \frac{\alpha}{n}$ matches our upper bound for the ellipticity $\mu$ and shows that the presence of an $x$-dependence creates a gap in the possible parameters $p$ and $q$: For Lipschitz-continuous dependence on $x$, the condition is $\frac{q}{p} < 1+ \frac 1n$, while in the autonomous case the bound $\frac{q}{p} < 1+ \frac{2}{n}$ is sufficient for $\sobo^{1,q}_{\mathrm{loc}}$-regularity. This is entirely analogous to the linear growth setting described in Theorem \ref{maintheorem} in comparison to the autonomous case as discussed above.\\

Furthermore, this line of research extends beyond $(p,q)$-growth functionals. Functionals of nearly linear growth with similar conditions on the asymmetry in integrability exponents have been considered in the recent contributions by \textsc{De Filippis \& Mingione} \cite{DeFilippis1} and \textsc{De Filippis \& Piccinini} \cite{De_Filippis_2024}.\\

Having established higher integrability of the gradients of generalized minimizers, we turn to Hölder continuity results. One of the main results serving as the starting point for our considerations is the following partial regularity result which is due to \textsc{Anzellotti \& Giaquinta} \cite{AnzellottiGiaquinta88}:
\begin{prop}{\cite[Theorem 6.1]{AnzellottiGiaquinta88}}\label{prop:PartialRegularity}
    Let $F: \Omega \times \R^{N \times n} \to \R$ be an integrand of linear growth. Let $F(x, \cdot)$ be convex for $\mathscr L^n$-a.e. $x \in \Omega$ such that $\langle \D_z^2F(x,z)\xi, \xi\rangle \ge \lambda(z) |\xi|^2$ for all $z, \xi \in \R^{N \times n}$, $x \in \Omega$ and some function $\lambda(z)>0$. Assume furthermore that $|F(x, z) - F(y,z)| \le c |x-y|^\gamma (1+ |z|)$ for all $x,y \in \Omega$, $z \in \R^{N \times n}$ for some $\gamma \in (0,1)$. Then every generalized minimizer $u \in \operatorname{GM}(\mathscr{F}; u_0)$ of \eqref{variationalproblem} is partially regular in the following sense:
    There is an open set $\Omega_0 \subset \Omega$ with $\mathscr L^n(\Omega \backslash \Omega_0)=0$ such that $u\in \hold^{1, \alpha}_{\mathrm{loc}}(\Omega_0; \R^N)$ for some $\alpha \in (0,1)$. The set $\Omega_0$ is called the regular set of $u$.
\end{prop}
Regularity results of this category are called \textit{partial regularity} results (as opposed to \textit{full regularity}) because they do not establish regularity on the full set $\Omega$ but only on a subset $\Omega_0 \subset \Omega$. Apart from the linear growth setting with a convex integrand considered here, one can also consider \textit{quasiconvex} integrands $F$. These are functions $F: \Omega \times \R^{N \times n} \to \R$ satisfying 
$$F(x_0,z_0) \le \int \limits_{(0,1)^n}F(x_0,z_0 + \D \varphi) \dd{x} \quad \forall \varphi \in \sobo^{1,\infty}_0((0,1)^n; \R^{N \times n})$$
at every $x_0 \in \Omega$ and $z_0 \in \R^{N \times n}$. Partial regularity for quasiconvex linear-growth integrands was shown by \textsc{Gmeineder \& Kristensen} \cite{GmeinederKristensen19BVPartialRegularity}. On top of that, partial regularity results are also available for functionals depending on the symmetric part of the gradient or for functionals of non-standard growth, for example, $(p,q)$-growth or Orlicz growth; see upon others \cite{BärlinKessler2022,Gmeineder20BD,Gmeineder21symmquasiconvexBD,GmeinederKristensen2024pq,Li22,Schmidt14,Stephan24}. In the scalar-valued case $N=1$ or under strong structural assumptions there are also gradient Hölder continuity results available on the full set $\Omega$, see, for example,  \cite{BeckSchmidt15InteriorGradient,Bildhauer03book} and also the very recent contribution due to \textsc{De Filippis, De Filippis \& Piccinini} \cite{defilippis2024boundedminimizersdoublephase}. \\

In the situation of Proposition \ref{prop:PartialRegularity} we denote
$$\operatorname{Sing} u:= \Omega \backslash \Omega_0$$
the so-called singular set of $u$. It is known as a by-product from partial regularity proofs (see, for example, \cite{AnzellottiGiaquinta88,GmeinederKristensen19BVPartialRegularity,Schmidt09}) that the singular set of a $\sobo^{1,1}_{\mathrm{loc}}$-minimizer consists of the non-Lebesgue points of the derivative, i.e.
\begin{align}\label{eq:singularset}
\operatorname{Sing} u :=\left\{x_0 \in \Omega: \limsup_{\rho \searrow 0} \fint \limits_{\ball(x_0, \rho)}|\D u - (\D u)_{\ball(x_0, \rho)}|\ \dd x >0 \right \},
\end{align}
where for $x_0 \in \Omega$, the integral average is taken over the ball $\ball(x_0, \rho)$ for sufficiently small $\rho > 0$ such that $\ball(x_0, \rho) \subset \Omega$. Here, the description of the singular set differs from the one in \cite[Chapter 6]{AnzellottiGiaquinta88}, where additionally the points $x_0 \in \Omega$ for which
\begin{align}\label{eq:singular_set_u}
    \limsup_{\rho \searrow 0} \fint_{\ball(x_0, \rho)} |u- \bar u| \dd x>0 \quad \text{ for all } \bar u \in \R^N
\end{align}
are included in the singular set. This is not the case here, because assuming that \eqref{eq:singular_set_u} does not hold is only necessary to treat the additional $u$-dependence in \cite{AnzellottiGiaquinta88}.
\\

Although the singular set is $\mathscr L^n$-negligible, it could still be considerably large. An important question in the context of partial regularity is whether it is possible to provide a bound on the Hausdorff dimension $\dim_{\mathscr{H}}(\operatorname{Sing} u)$. Dimension reduction results for minimizers of variational integrals have been obtained in the case of $p$-growth,
$$\frac 1c |z|^p -d \le F(x, z) \le c(1+ |z|^p)$$
for some constants $c>0, d \ge 0$.
For $p \ge 2$, if $F$ is convex and $\D_zF(x,z)$ is differentiable with respect to the first variable and satisfies the growth condition 
$$|\D_x \D_zF(x,z)| \le C (1+|z|^2)^{\frac{p-1}{2}},$$ 
then $\dim_{\mathscr H}(\operatorname{Sing} u) \le n-2$ (see, for example, \cite{GiaquintaModica79} or \cite[Chapter 9]{Giusti}).
A dimension reduction even in the case where $F$ is non-differentiable with respect to $x$ is provided by \cite{Mingione03}.

Other results in this direction include, for example, \cite{KristensenMingione06,KristensenMingione07}. A survey on different dimension reduction results for the singular set can be found in \cite{mingione_regularity_2006,Mingione08wild_side}. \\

In view of partial regularity and dimension bounds, it is also the condition of $\mu$-ellipticity (see \eqref{eq:mu-ellipticity}) that leads to an improvement in comparison to $\mathscr L^n(\Omega \backslash \Omega_0)=0$ (see Proposition \ref{prop:PartialRegularity}). For autonomous integrands, the dimension bound
$$\dim_{\mathscr H}(\operatorname{Sing} u) \le n-1$$
is available in the range $1<\mu \le \frac{n}{n-1}$ if $n \ge 3$ and for $1<\mu<2$ if $n=2$. This result is in principle well-known, however, it is hard to find a proof in the literature. We will therefore include a sketch of the proof in Section \ref{sec:proofDimension}. Although partial regularity as recorded in Proposition \ref{prop:PartialRegularity} is available for non-autonomous integrands, a dimension bound is not known in that case.\\

Our second main result provides such a Hausdorff dimension bound, thereby extending the regularity theory for convex linear growth integrands to the non-autonomous case. It is a consequence of the higher integrability established in Theorem \ref{maintheorem}. 
\begin{satz}[Dimension bound]\label{maintheorem2}
    Let $F$ be as in Theorem \ref{maintheorem}. If $1 < \mu < \frac{3n}{3n-\alpha}$, then
    $$    \dim_{\mathscr{H}}(\operatorname{Sing} u) \leq n - \frac{\alpha}{2}$$
    holds for every generalized minimizer $u \in \operatorname{GM}(\mathscr{F}; u_0)$ of \eqref{variationalproblem}.
\end{satz}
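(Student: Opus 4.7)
The plan is to promote the higher integrability from Theorem \ref{maintheorem} to a fractional differentiability statement for $\D u$, from which the Hausdorff dimension bound follows via the classical estimate on the non-Lebesgue points of Nikolskii functions. First I would extract, from the finite-difference Caccioppoli estimates underlying the proof of Theorem \ref{maintheorem} (combined with the Ekeland-regularization of the relaxed functional invoked there, which is unavoidable because $\operatorname{GM}(\mathscr F; u_0)$ can fail to be a singleton), a fractional bound for the auxiliary field $V_\mu(z) := (1+|z|^2)^{-\mu/4} z$ of the form
\[
\int_{\B_R(x_0)} |V_\mu(\D u)(x+h) - V_\mu(\D u)(x)|^2 \, \dd x \le C |h|^{\alpha} \int_{\B_{2R}(x_0)} (1+|\D u|)^\mu \, \dd x,
\]
valid for $|h| \ll 1$ and $\B_{2R}(x_0) \Subset \Omega$. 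The hypothesis $\mu < \frac{3n}{3n-\alpha}$ is exactly the condition under which $\mu < \frac{(3-\mu)n}{2n-\alpha}$, so that Theorem \ref{maintheorem} yields $\D u \in \lebe^\mu_{\mathrm{loc}}$ and the right-hand side is locally finite. This gives $V_\mu(\D u) \in N^{\alpha/2, 2}_{\mathrm{loc}}(\Omega; \R^{N \times n})$ in the Nikolskii scale.

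Next I would transfer this fractional smoothness from $V_\mu(\D u)$ to $\D u$ itself. Since $V_\mu$ is a diffeomorphism of $\R^{N \times n}$ whose inverse has derivative of size $\lesssim (1+|V_\mu^{-1}(w)|^2)^{\mu/4}$, one has the pointwise inequality $|\tau_h \D u| \le C(1+|\D u|+|\D u(\cdot +h)|)^{\mu/2}|\tau_h V_\mu(\D u)|$, with $\tau_h g(x) := g(x+h) - g(x)$. Applying the Cauchy--Schwarz inequality and the $\lebe^\mu$-bound on $\D u$ then yields
\[
\int_{\B_R(x_0)} |\D u(x+h) - \D u(x)| \, \dd x \le C |h|^{\alpha/2}, \qquad |h| \ll 1,
\]
i.e., $\D u \in N^{\alpha/2, 1}_{\mathrm{loc}}(\Omega; \R^{N \times n})$.

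Finally, the characterization \eqref{eq:singularset} shows that $\operatorname{Sing} u$ is contained in the complement of the $\lebe^1$-Lebesgue set of $\D u$. The classical Hausdorff dimension bound for non-Lebesgue points of Nikolskii (or fractional Sobolev) functions asserts that for $f \in N^{s,q}_{\mathrm{loc}}$ with $0 < sq < n$ this complement has Hausdorff dimension at most $n - sq$; see, e.g., Ziemer's \emph{Weakly Differentiable Functions}. Applied to $f = \D u$, $s = \alpha/2$, $q = 1$, this gives $\dim_{\mathscr H}(\operatorname{Sing} u) \le n - \alpha/2$.

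The main obstacle is the rigorous production of the first fractional estimate: the finite-difference procedure has to be carried out on the Ekeland-regularized minimizers, where the Euler--Lagrange equation holds only up to a small error depending on $\varepsilon$, and one must verify that this error does not spoil the $|h|^{\alpha}$-dependence on the right-hand side when passing to the limit $\varepsilon \searrow 0$. Balancing this error against the $\lebe^\mu$-integrability of $\D u$ furnished by Theorem \ref{maintheorem} is precisely what forces the narrower range $\mu < \frac{3n}{3n-\alpha}$ in place of the condition $\mu < 1+\alpha/n$ sufficient for Theorem \ref{maintheorem}.
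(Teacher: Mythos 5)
Your proposal follows essentially the same route as the paper: the finite-difference estimates for the Ekeland approximations give a uniform Nikolskii bound for the $V$-function (your $V_\mu$ is the paper's $V_\kappa$ with $\kappa = 1+\tfrac{\mu}{2}$, which makes the weight disappear), the transfer to $\D u$ via Cauchy--Schwarz uses exactly the $\lebe^\mu_{\mathrm{loc}}$-bound available precisely when $\mu < \frac{3n}{3n-\alpha}$ (the paper does the same with Young's inequality and a general $\kappa$), and the dimension bound then follows from the non-Lebesgue-point estimate for fractional Sobolev/Nikolskii functions as in Proposition \ref{thm:fracsobo-hausdorff}. The one step you leave implicit --- passing the $|h|^{\alpha/2}$-difference estimate from the approximations $\D u_k$ to $\D u$ itself --- is handled in the paper via the compact embedding $\sobo^{\theta,1}(K;\R^N) \hookrightarrow\hookrightarrow \lebe^1(K;\R^N)$, pointwise a.e.\ convergence of a subsequence and Fatou's lemma; also note that the narrower range for $\mu$ is forced by the uniform bound on $\int_K M_{h,s}^{\mu/2}\dd{x}$ (as you correctly identify earlier), not by the Ekeland error term, which is harmless in the whole range of Theorem \ref{maintheorem}.
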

Note in particular that the range for $\mu$ in Theorem \ref{maintheorem2} is more restrictive than in Theorem \ref{maintheorem}, since 
$
\frac{3n}{3n-\alpha} < 1 + \frac{\alpha}{n}.
$

\subsection{Main ideas of the proofs}
The proof of Theorem \ref{maintheorem} relies on a vanishing viscosity approximation strategy. The relaxation of the original functional \eqref{variationalproblem} to $\bv$ in the sense of \eqref{eq:relaxedFunctional} and \eqref{eq:integralrepresentation} leads to non-uniqueness of minimizers. This is due to the positive $1$-homogeneity of the recession function $F^\infty$ which results in a non-strictly convex term. Therefore, we start from a given generalized minimizer $u$ and construct a vanishing viscosity approximation sequence $(u_k)_k$ with the help of Ekeland's variational principle. This variational principle is a common tool in regularity results for linear growth functionals, see, for example, \cite{BeckSchmidt13,GmeinederKristensen19BD}. By using this approximating sequence, we avoid working directly with measures, which would appear as derivatives of $\bv$ functions. Instead, we are able to work in Sobolev spaces.\\

Taking an Euler-Lagrange-type inequality for the approximating sequence as the starting point, we then test with second order finite differences $\tau_{s,-h}(\rho^2 \tau_{s,h}u_k)$ in direction $s \in \{1,\dots, n\}$ of step width $h$. A major challenge is that we cannot differentiate the Euler-Lagrange equation or the corresponding Euler-Lagrange-type inequality due to the low regularity of $F$ in the first variable. The ideas presented here are similar to the strategy in \cite{Mingione03}, where the corresponding partial differential equations for non-autonomous $p$-growth functionals for $p \ge 2$ are considered. The observation is that we can interpret the assumption of Hölder continuity as some kind of fractional differentiability. Employing an auxiliary $V$-function and using the regularity assumptions on the integrand at hand, we obtain estimates on the decay of finite differences of the first derivatives as a power of $\abs{h}$. Thereby, we work within the scale of Nikolskiǐ spaces. In a certain range of the parameter $\mu$, embedding results then lead to higher integrability for the derivatives $\D u_k$. These carry over to the limit due to the construction of the approximating sequence as mentioned above.\\

Thereafter, Theorem \ref{maintheorem2} is a consequence of higher fractional Sobolev regularity and the measure density lemma. The fractional Sobolev regularity can be obtained from the same estimates on finite differences established before in the proof of Theorem \ref{maintheorem} and it gives an exact meaning to the idea that the Hölder assumption implies fractional differentiability. On the other hand, the measure density lemma provides an estimate on the set of non-Lebesgue points of functions in fractional Sobolev spaces.\\

\subsection{Organization of the paper}
In Section \ref{sec:prelims} we will provide the preliminaries and background material on functions of bounded variation and all relevant function spaces. We will also present the important measure density lemma that eventually leads to a dimension reduction. In Section \ref{sec:proofSobolev} we will prove Theorem \ref{maintheorem} by using the vanishing viscosity approximation. Section \ref{sharpness} is concerned with an example showing that in one dimension for $\mu> 1+ \alpha$, no Sobolev regularity for all generalized minimizers can be achieved. Thereby, it shows that the range for $\mu$ in Theorem \ref{maintheorem} is essentially sharp in one dimension. In Section \ref{sec:proofDimension} we extend this result to obtain fractional Sobolev regularity which leads to a dimension reduction. Possible generalizations and extensions of our results are discussed in Section \ref{sec:extensions}.

\section{Preliminaries}\label{sec:prelims}
\subsection{General Notation}
Throughout this paper, we assume $\Omega$ to be an open, bounded, and Lipschitz domain in $\R^n$. The Euclidean inner product on finite-dimensional real vector spaces is denoted by $\langle \cdot, \cdot \rangle$. The $n$-dimensional Lebesgue measure is denoted by $\mathscr{L}^n$ and the $(n-1)$-dimensional Hausdorff measure is denoted by $\mathscr{H}^{n-1}$. Given two positive, real valued functions $f,g$ we denote by $f \lesssim g$ that $f \leq Cg$ with a constant $C > 0$.

If $U \subset \R^n$ is measurable with $0 < \mathscr{L}^n(U) < \infty$ and $f \in \lebe^1(U;\R^N)$, we write as usual 
\begin{equation}\label{eq:mean-value-integral}
    (f)_{U} :=  \fint \limits_U f\dd{x} := \frac{1}{\mathscr{L}^n(U)} \int \limits_U f \dd{x}.
\end{equation}
For a given measurable map $f: \Omega \to \R^m$, a unit vector $e_s, \, s \in \{1,\dots,n\}$, and a step width $h \neq 0$, we define the finite difference $\tau_{s,h}f(x)$ by 
\begin{equation*}
    \tau_{s,h}f(x) := f(x + he_s) - f(x)
\end{equation*}
for all $x \in \Omega$ with $0<|h|<\mathrm{dist}(x, \, \partial \Omega)$. Moreover, for such $x$ we denote the difference quotient
\begin{equation*}
    \Delta_{s,h} f(x) := \frac{\tau_{s,h}f(x)}{h}.
\end{equation*}
Finally, for given $a \in \R^m, b \in \R^n$, we denote their dyadic product by $a \otimes b := ab^{\top}$.\\
\subsection{Functions of bounded variation}\label{sec:IntroBV}
We recall fundamental properties of functions of bounded variation, with primary references being \cite{AmbrosioFuscoPallara,evans_measure_1991}. A map $u \in \lebe^1(\Omega; \R^N)$ is called a function of bounded variation if the distributional derivative $\D u$ is a finite $\R^{N \times n}$-valued Radon measure. The space $\bv(\Omega; \R^N)$ of functions of bounded variation is a Banach space with the norm
$$\norm{u}_{\bv(\Omega; \R^N)}:= \norm{u}_{\lebe^1(\Omega; \R^N)} + |\D u|(\Omega),$$
where $|\D u|$ is the total variation measure of $\D u$. By the Radon-Nikodým theorem we can write $\D u$ as the sum of a measure that is absolutely continuous with respect to $\mathscr{L}^n$ and a part that is singular. We denote the density of the absolutely continuous part by $\nabla u$. For the singular part, we use the fact that any Radon measure is absolutely continuous with respect to its own total variation. Therefore, we can write
$$\D^su = \frac{\dd{\D^su}}{\dd{|\D^su|}}|\D^su|,$$
where as usual, $\frac{\dd{\D^su}}{\dd{|\D^su|}}$ denotes the density of $\D^su$ with respect to $|\D^su|$.\\

In many applications within the calculus of variations, the topology induced by this norm is overly restrictive. Consequently, alternative concepts of convergence in $\bv$ are crucial for addressing variational problems with linear growth.
\begin{itemize}
    \item \textit{Weak$^*$ convergence}: A sequence $(u_k)_k \subset \bv(\Omega; \R^N)$ is said to converge weakly$^*$ in $\bv$, denoted by $u_k \wstar u$, if $u_k \to u$ in $\lebe^1(\Omega; \R^N)$ and $\D u_k \wstar \D u$ as $\R^{N \times n}$-valued Radon measures.
    \item \textit{Strict convergence}: A sequence $(u_k)_k$ is said to converge strictly to $u$ if, in addition to weak$^*$ convergence, it holds that $|\D u_k|(\Omega) \to |\D u|(\Omega)$.
    \item \textit{Area-strict convergence}: A sequence $(u_k)_k$ is said to converge area-strictly to $u$ if in addition to weak$^*$ convergence the strict convergence of $(\mathscr L^n, \D u_k)$ to $(\mathscr L^n, \D u)$ as $\R^{1+N \times n}$-valued Radon measures holds. This is equivalent to
    $$\sqrt{1+ |\D u_k|^2}(\Omega) \to \sqrt{1+|\D u|^2}(\Omega)$$
    where the application of a convex function to a measure is defined using the so-called recession function. These functionals were first studied by \textsc{Goffman \& Serrin} \cite{GoffmanSerrin64} (for more details see, for example, \cite[Chapter 2.1]{Schmidt15} or \cite[Chapter 5]{AmbrosioFuscoPallara}).
\end{itemize}

\subsection{Application of Reshetnyak's Theorem}
In the context of linear growth functionals, it is necessary to deal with functionals of measures. The key result concerning (semi-)continuity of these functionals is due to \textsc{Reshetnyak} \cite{Reshetnyak68}. We present it here in the form of \cite[Theorem 2.4]{BeckSchmidt13}.
\begin{satz}[Reshetnyak]\label{thm:Reshetnyak}
    Let $(\mu_k)_k$ be a sequence of finite $\R^m$-valued Radon measures on $\Omega$ which converges weakly$^*$ to a finite $\R^m$-valued Radon measure $\mu$ on $\Omega$. Assume that all measures $\mu_k$ and $\mu$ take values in some closed convex cone $K$ in $\R^m$. 
    \begin{enumerate}
        \item If $f: K \to [0, \infty]$ is a lower semicontinuous, convex and $1$-homogeneous function, then
        $$\int \limits_\Omega f\left(\frac{\dd{\mu}}{\dd{|\mu|}}\right ) \dd{|\mu|} \le \liminf_{k \to \infty} \int \limits_\Omega f\left(\frac{\dd{\mu_k}}{\dd{|\mu_k|}}\right ) \dd{|\mu_k|}. $$
        \item If $\mu_k$ converges strictly to $\mu$ and $f: K \to [0, \infty)$ is continuous and $1$-homogeneous, then
        $$\int \limits_\Omega f\left(\frac{\dd{\mu}}{\dd{|\mu|}}\right ) \dd{|\mu|} = \lim_{k \to \infty} \int \limits_\Omega f\left(\frac{\dd{\mu_k}}{\dd{|\mu_k|}}\right ) \dd{|\mu_k|}.$$
    \end{enumerate}
\end{satz}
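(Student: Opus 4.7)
This is Reshetnyak's classical lower-semicontinuity and continuity theorem for convex functionals of measures, and the two parts call for rather different arguments.

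For part (1), the key tool is the dual representation of convex, lower semicontinuous, positively $1$-homogeneous functions on the cone $K$. I would first establish
$$f(z) = \sup_{\varphi \in A_f} \langle \varphi, z \rangle, \qquad z \in K,$$
where $A_f := \{\varphi \in \R^m : \langle \varphi, w \rangle \le f(w) \text{ for all } w \in K\}$ is a closed convex subset of $\R^m$. This comes from Hahn-Banach applied to the closed convex epigraph of $f$, combined with $1$-homogeneity, which forces every separating affine functional to be linear. I would then lift this pointwise formula to the integral level and show
$$\int_\Omega f\Bigl(\tfrac{\mathrm{d}\mu}{\mathrm{d}|\mu|}\Bigr) \mathrm{d}|\mu| = \sup \Bigl\{ \int_\Omega \varphi \cdot \mathrm{d}\mu : \varphi \in \mathrm{C}_c(\Omega; \R^m), \ \varphi(x) \in A_f \text{ for all } x \Bigr\}.$$
The inequality $\ge$ is pointwise from the dual formula. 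For $\le$, partition $\Omega$ into Borel sets on which $\mathrm{d}\mu/\mathrm{d}|\mu|$ is approximately constant, select near-optimal constants $\varphi_i \in A_f$ on each piece, and use inner regularity of $|\mu|$ and Lusin's theorem to smooth the resulting simple function into an admissible $\varphi \in \mathrm{C}_c(\Omega; A_f)$. The case where $f$ takes the value $+\infty$ is handled by the monotone approximation $f_j := f \wedge (j \cdot |\cdot|)$ together with monotone convergence. Once this dual formula is in place, each admissible test $\varphi$ is a weak*-continuous linear functional on measures, so $\int \varphi \cdot \mathrm{d}\mu = \lim_k \int \varphi \cdot \mathrm{d}\mu_k \le \liminf_k \int f \, \mathrm{d}|\mu_k|$, and taking the supremum over $\varphi$ gives the claim.

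For part (2) the assumption drops convexity and keeps only continuity of $f$, so part (1) is not directly applicable. My plan is to reduce the functional to one acting on the ``direction field''. Writing $\mu_k = \xi_k |\mu_k|$ with $|\xi_k| = 1$ $|\mu_k|$-a.e., we have $\int f(\mathrm{d}\mu_k/\mathrm{d}|\mu_k|) \, \mathrm{d}|\mu_k| = \int h(\xi_k) \, \mathrm{d}|\mu_k|$ where $h := f|_{K \cap S^{m-1}}$ is continuous on the compact set $K \cap S^{m-1}$. I would first verify the conclusion for two model families of test functions: for the trivial case $f(z) = |z|$ it reduces to the mass-convergence half of strict convergence, and for linear $f(z) = \langle a, z \rangle$ it follows immediately from weak* convergence. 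I would then propagate continuity to linear combinations, and finally to all continuous $h$ on $K \cap S^{m-1}$ via a Stone-Weierstrass density argument on the compact sphere-slice, using the uniform bound $\bigl|\int h \, \mathrm{d}|\mu_k|\bigr| \le \|h\|_\infty \, |\mu_k|(\Omega)$ and the fact that $|\mu_k|(\Omega)$ is bounded (by strict convergence) to control the approximation error uniformly in $k$.

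The main obstacle is closing the Stone-Weierstrass step: continuous linear combinations on $K \cap S^{m-1}$ only capture test functions of the form $\langle a, \xi \rangle$, which by themselves are dense only modulo the algebra they generate. To extend from linear $h$ to products of linear $h$'s one needs weak* convergence of the tensorised measures, and the key gain from strict over merely weak* convergence is precisely what upgrades weak* convergence of $\mu_k$ to convergence against such higher-order test functions of the directions. An alternative route that avoids this issue is to lift $\mu_k$ to the joint (Young-type) measure $\tilde\mu_k$ on $\Omega \times (K \cap S^{m-1})$ with $\tilde\mu_k(A \times B) := |\mu_k|(\{x \in A : \xi_k(x) \in B\})$ and show that strict convergence of $\mu_k$ to $\mu$ on $\Omega$ upgrades to weak* convergence of $\tilde\mu_k$ to $\tilde\mu$ on $\Omega \times (K \cap S^{m-1})$; then the bounded continuous test function $(x,v) \mapsto h(v)$ delivers the result in one stroke. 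The verification of this lifting, which essentially says that no mass escapes to infinity in the direction variable, is the technical crux and uses part (1) applied to $|\cdot|$ together with the mass identity from strict convergence.
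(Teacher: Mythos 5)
The paper does not actually prove this statement: it is quoted as a classical result, cited from Reshetnyak's paper and stated in the form of Beck \& Schmidt, so there is no in-paper proof to compare yours against. Your proposal is, in substance, the standard proof from that literature (compare Ambrosio--Fusco--Pallara, Theorems 2.38--2.39): part (1) via the dual representation $f(z)=\sup_{\varphi\in A_f}\langle\varphi,z\rangle$ and the resulting formula for the functional as a supremum of weak$^*$-continuous linear functionals, part (2) via lifting to measures on $\Omega\times(K\cap S^{m-1})$ and testing with $(x,v)\mapsto h(v)$; your first, Stone--Weierstrass route for part (2) indeed does not close, as you yourself note, so the lifted-measure argument is the right one. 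Two points need repair or completion. First, in part (1) the truncation $f_j:=f\wedge(j|\cdot|)$ is not convex (a minimum of convex functions need not be), so it cannot be fed back into the duality machinery as stated; either truncate the dual set and use $f_j(z):=\sup\{\langle\varphi,z\rangle:\varphi\in A_f\cap\overline{B(0,j)}\}$, which is convex, $1$-homogeneous, finite and increases to $f$ on $K$, or simply truncate the scalar integrand $f(\xi(x))\wedge j$, which is all monotone convergence requires since the polar density $\xi$ has $|\xi|=1$ $|\mu|$-a.e. Second, in part (2) the real content is exactly the step you defer: identifying every weak$^*$ cluster point $\sigma$ of the lifted measures $\tilde\mu_k$ with $\tilde\mu$. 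The standard way is to test with $\phi(x)\,v$ for $\phi\in\hold_c(\Omega)$ to get $\int\phi(x)\,v\,d\sigma(x,v)=\int\phi\,d\mu$, to use the mass identity $|\mu_k|(\Omega)\to|\mu|(\Omega)$ together with lower semicontinuity (part (1) with $f=|\cdot|$) to pin down the total mass of $\sigma$, and then to disintegrate $\sigma=\pi_{\#}\sigma\otimes\sigma_x$ and invoke the strict convexity of the Euclidean unit ball: a probability measure on the unit sphere whose barycenter has norm one must be a Dirac, forcing $\sigma_x=\delta_{\xi(x)}$ and $\pi_{\#}\sigma=|\mu|$; the same mass convergence upgrades weak$^*$ to narrow convergence, which is needed because $h(v)$ is bounded but not compactly supported in the $x$-variable. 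With those two points made precise, your outline is a correct and complete proof strategy.
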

To see that this theorem is applicable for the relaxed functional as in \eqref{eq:integralrepresentation} in our case, one turns to the so-called perspective integrand $\overline{f}: \Omega \times [0, \infty) \times \R^{N \times n} \to [-\infty, \infty]$ defined by
$$\overline{f}(x,t,z):= \begin{cases}
    tf\left(x,\frac zt\right), \quad &\text{ for } t>0,\\
    f^\infty(x,z), \quad &\text{ for } t=0\\
\end{cases},$$
which is well-defined, lower semicontinuous and $1$-homogeneous if $f$ is convex. Then we have for $v \in \bv(\Omega; \R^N)$ with the decomposition $\D v = \nabla v \mathscr L^n + \frac{\dd{\D^sv}}{\dd{|\D^sv|}} |\D^sv|$:
\begin{equation}\label{eq:ReshetnyakApplication}
\int \limits_\Omega f(\nabla v) \dd{x} + \int \limits_\Omega f^\infty\left(\frac{\dd{\D^sv}}{\dd{|\D^sv|}}\right) \dd{|\D^sv|} = \int \limits_\Omega \overline{f}\left(x, \frac{\dd{(\mathscr L^n, \D v)}}{\dd{|(\mathscr L^n, \D v)|}}\right) \dd{|(\mathscr L^n, \D v)|}.
\end{equation}
The lower semicontinuity part may thus be applied to the functional on the left hand side for convex $f$ and a sequence $(v_k)_k \subset \bv(\Omega; \R^N)$ if $\D v_k \wstar \D v$ as Radon measures. The requirement of strict convergence in Theorem \ref{thm:Reshetnyak}, on the other hand, is equivalent to the area-strict convergence of the approximating sequence $( v_k)_k$ to $ v$ because the perspective integrand acts on $(\mathscr L^n, \D v)$. For more details, see, for example, \cite[Remark 2.5]{BeckSchmidt13}.\\

In the construction of an approximating sequence below in Section \ref{sec:proofSobolev}, we will use the following result due to \textsc{Bildhauer}. It allows us to approximate any $u \in \bv(\Omega; \R^N)$ area-strictly by smooth functions with arbitrarily prescribed traces. To this end, let $\tilde \Omega \Supset \Omega$ be a bounded Lipschitz domain and extend the given boundary datum $u_0 \in \sobo^{1,1}(\Omega; \R^N)$ to a function $\tilde u_0 \in \sobo^{1,1}_0(\tilde \Omega; \R^N)$. Then it holds
\begin{lemma}[{\cite[Lemma B.2]{Bildhauer03book}}]\label{lem:BildhauerApprox}
    Given $\tilde \Omega$ and $\tilde u_0$ as above, let $u \in \bv(\Omega; \R^N)$ and denote by $\tilde u$ its extension to $\tilde \Omega$ by $\tilde u_0$. There is a sequence $(u_k)_k \subset u_0 + \hold_c^\infty(\Omega; \R^N)$ such that $\tilde u_k$ converges to $\tilde u$ area-strictly, where $\tilde u_k$ denotes the extension of $u_k$ to $\tilde \Omega$ by $\tilde u_0$.
\end{lemma}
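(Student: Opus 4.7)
The plan is to reduce to approximating the difference $v := u - u_0 \in \bv(\Omega;\R^N)$ by compactly supported smooth functions $\varphi_k$, so that $u_k := u_0 + \varphi_k$ fulfils the required inclusion, and then show that the zero extension of $\varphi_k$ combined with $\tilde u_0$ converges area-strictly to $\tilde u$ on $\tilde\Omega$. Extending $v$ by zero yields $\bar v \in \bv(\tilde\Omega;\R^N)$ whose singular part carries, in addition to $\D^s v$ in $\Omega$, a jump on $\partial\Omega$ of total mass $\int_{\partial\Omega}|u - u_0|\dd{\mathscr H^{n-1}}$, coinciding with the surface term in \eqref{eq:integralrepresentation}. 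The crucial feature of the construction is that this surface mass must be \emph{produced} by the gradient of the smooth approximants concentrating in a thin boundary layer, since $\varphi_k$ vanishes near $\partial\Omega$ and hence $\tilde\varphi_k$ is Sobolev-regular across $\partial\Omega$.

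The construction uses two parameters. Let $\eta_{\delta} \in \hold_c^\infty(\Omega)$ be a cutoff with $\eta_\delta \equiv 1$ on $\{\dista(\cdot,\partial\Omega) > 2\delta\}$, $\eta_\delta \equiv 0$ on $\{\dista(\cdot,\partial\Omega) < \delta\}$ and $|\nabla \eta_\delta| \le C/\delta$, and let $\rho_\varepsilon$ be a standard mollifier. For sequences $\delta_k \downarrow 0$ and $\varepsilon_k \downarrow 0$ with $\varepsilon_k \ll \delta_k$, set
\[
  \varphi_k := \rho_{\varepsilon_k} * (\eta_{\delta_k} v) \in \hold_c^\infty(\Omega;\R^N), \qquad u_k := u_0 + \varphi_k.
\]
The $L^1$-convergence $\tilde u_k \to \tilde u$ in $\tilde\Omega$ follows routinely from dominated convergence and standard mollification estimates. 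For the area integrals, decompose $\tilde\Omega$ into three regions: the exterior $\tilde\Omega \setminus \overline{\Omega}$ (where $\tilde u_k = \tilde u_0 = \tilde u$, so contributions are identical); the interior bulk $\Omega_{2\delta_k} := \{\dista(\cdot,\partial\Omega) > 2\delta_k\}$, where $\varphi_k$ reduces to the genuine full mollification $\rho_{\varepsilon_k} * v$ of $v$, and area-strict convergence on $\Omega_{2\delta_k}$ (which exhausts $\Omega$ as $k \to \infty$) follows from the perspective-integrand identity \eqref{eq:ReshetnyakApplication} together with Theorem~\ref{thm:Reshetnyak}; and the boundary layer $\{\delta_k < \dista(\cdot,\partial\Omega) < 2\delta_k\}$, which requires a separate analysis.

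On the boundary layer, one expands $\D(\eta_{\delta_k} v) = \eta_{\delta_k} \D v + v \otimes \nabla\eta_{\delta_k}\,\mathscr L^n$. The first summand has $L^1$-mass bounded by $|\D v|(\{\dista(\cdot,\partial\Omega) < 2\delta_k\}) \to 0$. The second, using a coarea-type argument along level sets of $\dista(\cdot,\partial\Omega)$ together with the continuity of the BV trace in normal strips, satisfies $\int_\Omega |v \otimes \nabla\eta_{\delta_k}| \dd{x} \to \int_{\partial\Omega}|u - u_0|\dd{\mathscr H^{n-1}}$. Since the layer has $\mathscr L^n$-measure $\to 0$ and on it $|\nabla(\eta_{\delta_k}v)|$ dominates, one obtains $\int_{\mathrm{layer}} \sqrt{1 + |\nabla(\eta_{\delta_k}v)|^2}\dd{x} \to \int_{\partial\Omega}|u-u_0|\dd{\mathscr H^{n-1}}$. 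Choosing $\varepsilon_k$ small enough that mollification distorts the layer-estimate by at most $1/k$ (using that convolution commutes with integration up to a vanishing error for $\varepsilon_k\ll \delta_k$), one concludes $\limsup_k \int_{\tilde\Omega} \sqrt{1 + |\nabla \tilde u_k|^2}\dd{x} \le \sqrt{1 + |\D \tilde u|^2}(\tilde\Omega)$, while the reverse inequality is Reshetnyak's lower semicontinuity applied to the perspective integrand on the $\mathrm{L}^1$-convergent sequence. The main obstacle is the non-$1$-homogeneity of the area integrand $\sqrt{1+|\cdot|^2}$: creating \emph{exactly} the right amount of mass $\int_{\partial\Omega}|u-u_0|\dd{\mathscr H^{n-1}}$ in the layer requires $|\nabla \eta_{\delta_k}|$ of order $1/\delta_k$ to force $\sqrt{1+|v\otimes\nabla\eta_{\delta_k}|^2}\approx |v\otimes\nabla\eta_{\delta_k}|$, while simultaneously keeping the layer thin enough that the absolutely continuous piece $\eta_{\delta_k}\D v$ does not overshoot; balancing these scales is the technical heart of the argument.
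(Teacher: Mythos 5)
The paper does not prove this lemma at all --- it is quoted verbatim from Bildhauer's book --- so the comparison is with the standard construction that the cited proof rests on, and your cutoff-plus-mollification scheme is indeed that construction. Your overall logic is also the right one: $\lebe^1$-convergence plus a $\limsup$-bound on the area functional suffices, because the matching $\liminf$-bound comes from lower semicontinuity. One small repair: on the bulk $\Omega_{2\delta_k}$ you should not invoke Theorem \ref{thm:Reshetnyak} (its continuity part presupposes exactly the strict convergence you are trying to prove); the correct tool is Jensen's inequality for the mollification of the measure $\sqrt{1+|\D \tilde u|^2}$, which gives $\int_{\Omega_{2\delta_k}}\sqrt{1+|\nabla \tilde u_k|^2}\dd{x}\le \sqrt{1+|\D u|^2}(\Omega)+o(1)$ --- an upper bound is all you need there, so this is harmless.

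The genuine gap is in the boundary layer. With a cutoff only assumed to satisfy $|\nabla\eta_\delta|\le C/\delta$, all you can conclude is $\limsup_k\int_\Omega|v\otimes\nabla\eta_{\delta_k}|\dd{x}\le C\int_{\partial\Omega}|u-u_0|\dd{\mathscr H^{n-1}}$, and for $C>1$ this destroys area-strict convergence; the asserted limit $\int_\Omega|v\otimes\nabla\eta_{\delta_k}|\dd{x}\to\int_{\partial\Omega}|u-u_0|\dd{\mathscr H^{n-1}}$ simply does not follow from the hypotheses you placed on $\eta_\delta$. To make it true you must choose $\eta_\delta=\theta_\delta\circ \dista(\cdot,\partial\Omega)$, use $|\nabla\dista(\cdot,\partial\Omega)|=1$ a.e.\ and the coarea formula to rewrite the layer term as an average of the parallel-surface integrals $\int_{\{\dista(\cdot,\partial\Omega)=t\}}|v|\dd{\mathscr H^{n-1}}$, and then prove that these converge to $\int_{\partial\Omega}|\operatorname{tr}v|\dd{\mathscr H^{n-1}}$ as $t\searrow0$. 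For a merely Lipschitz boundary this last step is not ``continuity of the BV trace in normal strips'': level sets of the distance function need not be Lipschitz surfaces, and the required one-sided Minkowski-content/trace identity has to be established via a local graph representation or a bi-Lipschitz collar (equivalently, one builds the cutoff from a collar coordinate rather than from the Euclidean distance). This is where the Lipschitz regularity of $\partial\Omega$ actually enters, and it is asserted rather than proved in your sketch. Relatedly, the ``technical heart'' is misidentified: the non-homogeneity of $\sqrt{1+|\cdot|^2}$ is harmless for the upper bound since $\sqrt{1+t^2}\le 1+t$ and the layer volume vanishes, and the term $\eta_{\delta_k}\D v$ contributes at most $|\D v|(\{\dista(\cdot,\partial\Omega)<2\delta_k\})\to0$ automatically, with no balancing of scales needed; the crux is obtaining the constant exactly $1$ in front of the boundary mass.
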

The area-strict convergence of this approximating sequence makes Theorem \ref{thm:Reshetnyak} applicable by the above remarks.

\subsection{Besov spaces and embeddings}
 In this section we review some important facts and results about fractional spaces. Firstly, we recall that, given $1 \leq p < \infty$ and $0 < \alpha < 1$, a measurable map $u: \Omega \to \R^N$ belongs to the fractional Sobolev space $\sobo^{\alpha,p}(\Omega; \R^N)$ if and only if $u \in \lebe^p(\Omega; \R^N)$ and the \textit{Gagliardo seminorm}
 \begin{equation*}
    [u]^p_{\sobo^{\alpha,p}(\Omega; \R^N)} := \iint \limits_{\Omega \times \Omega} \frac{|u(x) - u(y)|^p}{|x-y|^{n + \alpha p}} \dd{(x,y)} < \infty.
\end{equation*}
 of $u$ is finite. The norm on the fractional Sobolev space is given by
$$\norm{u}_{\sobo^{\alpha, p}(\Omega; \R^N)}:= \norm{u}_{\lebe^p(\Omega; \R^N)} +[u]_{\sobo^{\alpha,p}(\Omega; \R^N)}.$$
The fractional Sobolev spaces arise as special cases of the more general Besov spaces which we will recall next.\\

There are several equivalent ways to define Besov spaces. The most suitable for us is the following: Let $0<\theta<1$ and $1 \le p,q \le \infty$. Let $\Omega \subset \R^n$ be open and write $\Omega_h:= \{x \in \Omega: \mathrm{dist}(x, \partial \Omega)>h\}$. We define for $u \in \lebe^1_{\mathrm{loc}}(\Omega; \R^N)$ the quantities
\begin{align*}
[u]_{\operatorname{B}_{p,q}^\theta(\Omega; \R^N)}:=& \sum_{s=1}^n \left(\int \limits_0^\infty \left(\frac{\norm{\tau_{s,h}u}_{\lebe^p(\Omega_h; \R^N)}}{h^\theta} \right)^q \frac{\dd{h}}{h} \right)^{\frac 1q}, \quad &\text{ if }& 1 \le p,q <\infty,\\
  [u]_{\operatorname{B}_{p,\infty}^\theta(\Omega; \R^N)}:=& \sup_{h>0} \, \max_{s \in \{1, \dots, n\}} \frac{\norm{\tau_{s,h}u}_{\lebe^p(\Omega_h; \R^N)}}{h^\theta}, \quad &\text{ if }& 1\le p <\infty, q=\infty.
\end{align*}
The space of functions such that
$$\norm{u}_{\operatorname{B}_{p,q}^\theta(\Omega; \R^N)}:= \norm{u}_{\lebe^p(\Omega; \R^N)} + [u]_{\operatorname{B}_{p,q}^\theta(\Omega; \R^N)} < \infty$$
is then called the $(\theta, p,q)$-Besov space $\operatorname{B}_{p,q}^\theta(\Omega; \R^N)$. Note that $\operatorname{B}_{p,p}^\theta(\Omega; \R^N) \simeq \sobo^{\theta,p}(\Omega; \R^N)$ whenever $\Omega$ is a smooth domain (see \cite[Chapter 3.4.2]{Triebel}). We also call $\mathcal N^{\theta,p}:= \operatorname{B}_{p,\infty}^\theta$ the $(\theta,p)$-Nikolskiǐ space.\\

\subsection{The space $\sobo^{-1,1}(\Omega;\R^{N})$}\label{sec:negativeSobolevSpace}
We are going to apply Ekeland's variational principle (see below in Lemma \ref{lem:Ek_Prin}) in the negative Sobolev space $\sobo^{-1,1}(\Omega; \R^N)$. In contrast to negative Sobolev spaces $\sobo^{-k, p}(\Omega; \R^N)$ for $p>1$, this space can not be defined as the dual of a Sobolev space. Instead, it is defined directly by use of distributions.
\begin{defi}
    The space $\sobo^{-1,1}(\Omega; \R^N)$ is defined as the set of all $T \in \mathscr D'(\Omega; \R^N)$ such that there are $T_0, T_1, \dots, T_n \in \lebe^1(\Omega; \R^N)$ with
    $$T= T_0 + \sum_{j=1}^n \partial_j T_j$$
    in $\mathscr D'(\Omega; \R^N)$. We endow $\sobo^{-1,1}(\Omega; \R^N)$ with a norm via
    $$\norm{T}_{\sobo^{-1,1}(\Omega; \R^N)}:= \inf \left\{ \sum_{j=0}^n \norm{T_j}_{\lebe^1(\Omega; \R^N)} \right\}$$
    where the infimum ranges over all possible representations of $T$ as above.
\end{defi}
This definition gives rise to a Banach space, see \cite[Chapter 2]{BeckSchmidt13}.\\

Let us further remark that for a function $w \in \lebe^1(\Omega;\R^N)$, we can estimate the $\sobo^{-1,1}$-norm of its derivative and finite difference quotients. 
More precisely, for $s \in \{1, \dots, n\}$ and $h > 0$ there holds
\begin{align}
    \norm{\partial_s w}_{\sobo^{-1,1}(\Omega;\R^N)} &\leq \norm{w}_{\lebe^{1}(\Omega;\R^N)}, \notag\\
    \norm{\Delta_{s,h} w}_{\sobo^{-1,1}(\Omega_h;\R^N)} &\leq \norm{w}_{\lebe^{1}(\Omega;\R^N)}. \label{Dw-11}
\end{align}

The first inequality follows immediately from the definition of the norm on $\sobo^{-1,1}(\Omega; \R^N)$ while for the second inequality we use the representation
$$\Delta_{s,h}w(x) = \frac 1h \int \limits_0^1 \partial_s w(x+the_s) \cdot h \dd{t} = \frac{\partial}{\partial x_s} \int \limits_0^1 w(x+the_s)\dd{t}$$
for $w \in \operatorname{C}^1(\Omega; \R^N)$ which implies the claim in that case. By density the inequality follows for all $w \in \lebe^1(\Omega; \R^N)$.\\

This property illustrates the benefit of selecting the perturbations in Ekeland's variational principle to be in $\sobo^{-1,1}(\Omega; \R^N)$. Given our focus on fractional difference quotients of the derivatives of the functions in an approximating sequence, this approach enables us to estimate these fractional derivatives by a prescribed uniform bound on the $\sobo^{1,1}$-norm of the sequence. Similarly, it enables us to obtain estimates for second derivatives or second order difference quotients by 
\begin{equation}\label{eq:DifferenceQuotientSecondOrder}
\norm{\D^2 u}_{\sobo^{-1,1}(\Omega; \R^{N \times n^2})} \lesssim \norm{\D u}_{\lebe^1(\Omega; \R^{N\times n})} \lesssim \norm{u}_{\sobo^{1,1}(\Omega; \R^N)}.
\end{equation}

\subsection{The measure density lemma}
In obtaining a Hausdorff dimension bound for the singular set we will work with the measure density lemma originally due to \textsc{Giusti}, see for example, \cite[Prop. 2.7]{Giusti}. For the sake of completeness we present the result here as it is given in \cite[Chapter 1.4]{Beck}.
\begin{lemma}\label{lem:measuredensity}
Let $\Omega$ be an open set in $\R^n$ and let $\lambda$ be a finite, non-negative and non-decreasing set function on the family of open subsets of $\Omega$ which is countably super-additive, meaning that for any family $\{O_i\}_{i \in \N}$ of pairwise disjoint open subsets of $\Omega$ it holds that
$$\lambda \left( \bigcup_{i \in \N}O_i\right )\ge \sum_{i \in \N} \lambda(O_i).$$
Then, for every $\alpha \in (0,n)$ the sets
$$E^\alpha:= \{x_0 \in \Omega: \limsup_{\rho \searrow 0} \rho^{-\alpha} \lambda\left(\ball(x_0, \rho)\right)>0\}$$
satisfy $\dim_{\mathscr H}(E^\alpha) \le \alpha$.
\end{lemma}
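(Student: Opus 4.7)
The plan is to show that $\mathscr{H}^s(E^\alpha) = 0$ for every $s > \alpha$, which by definition of the Hausdorff dimension gives $\dim_{\mathscr H}(E^\alpha) \le \alpha$. By countable subadditivity of Hausdorff measure, it suffices to prove the vanishing for each of the exhausting sublevel sets
\[
E^\alpha_k := \Bigl\{ x_0 \in \Omega : \limsup_{\rho \searrow 0} \rho^{-\alpha}\lambda(\Omega(x_0,\rho)) > 1/k \Bigr\},\qquad k\in\N,
\]
so I fix $k \in \N$ and $s > \alpha$, and aim to show $\mathscr{H}^s(E^\alpha_k) = 0$.

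Fix next an auxiliary scale $\eta > 0$. For every $x \in E^\alpha_k$, the definition of $\limsup$ furnishes a radius $\rho_x \in (0,\eta)$ with
\[
\lambda(\Omega(x,\rho_x)) \,>\, \frac{\rho_x^\alpha}{k}.
\]
Applying the Vitali $5r$-covering lemma to the family $\{B(x,\rho_x) : x \in E^\alpha_k\}$, whose radii are uniformly bounded by $\eta$, yields a countable pairwise disjoint subcollection $\{B(x_i,\rho_i)\}_{i\in\N}$ such that $E^\alpha_k \subset \bigcup_i B(x_i,5\rho_i)$. The disjointness of the balls $B(x_i,\rho_i)$ transfers to the open sets $\Omega(x_i,\rho_i) = B(x_i,\rho_i)\cap\Omega$, which is the property enabling the use of countable super-additivity of $\lambda$.

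Estimating the Hausdorff $10\eta$-premeasure through the $5r$-cover and inserting the lower bound above, I would obtain
\[
\mathscr{H}^s_{10\eta}(E^\alpha_k) \;\le\; \sum_i (10\rho_i)^s \;=\; 10^s \sum_i \rho_i^{s-\alpha}\,\rho_i^\alpha \;\le\; 10^s\,\eta^{s-\alpha}\,k\,\sum_i \lambda(\Omega(x_i,\rho_i)) \;\le\; 10^s\,\eta^{s-\alpha}\,k\,\lambda(\Omega),
\]
where the last inequality is precisely the countable super-additivity of $\lambda$ on the disjoint family $\{\Omega(x_i,\rho_i)\}_i$. Because $\lambda(\Omega)$ is finite and $s-\alpha>0$, letting $\eta \searrow 0$ forces $\mathscr{H}^s(E^\alpha_k) = 0$ and hence $\mathscr{H}^s(E^\alpha) = 0$.

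The argument is essentially textbook, and the only delicate point is ensuring that the Vitali $5r$-lemma is applicable; this is handled by cutting off the candidate radii at $\eta$, a parameter that is driven to $0$ at the end. The structural role of the hypotheses is transparent: finiteness of $\lambda$ provides the global bound $\lambda(\Omega)<\infty$, countable super-additivity converts the local density lower bounds into a single summed bound, and monotonicity (together with openness) makes the restriction to disjoint open balls harmless.
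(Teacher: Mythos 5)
Your proof is correct. Note that the paper itself does not prove this lemma but only cites it (Giusti, Prop.\ 2.7, and Beck, Ch.\ 1.4), and your argument is exactly the standard one behind those references: reduce to the sublevel sets $E^\alpha_k$, pick radii $\rho_x<\eta$ realizing the density bound, apply the Vitali $5r$-covering lemma, use super-additivity and monotonicity of $\lambda$ on the disjoint sets $\Omega(x_i,\rho_i)$, and let $\eta\searrow 0$; the only (harmless) cosmetic difference from Giusti's formulation is that you show $\mathscr H^s(E^\alpha_k)=0$ directly for each $s>\alpha$ rather than first proving $\mathscr H^\alpha(E^\alpha_k)<\infty$ and then passing to the dimension bound.
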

With the help of this lemma, it is easy to obtain the following dimension bounds on the set of non-Lebesgue points of functions in fractional Sobolev spaces.
\begin{prop}[{\cite[Section 4]{Mingione03}, \cite[Prop. 1.76]{Beck}}]\label{thm:fracsobo-hausdorff}
    Let $f \in \sobo^{\theta, p}(\Omega; \R^N)$ for $0< \theta \le 1$ and $1 \le p <\infty$ with $\theta p <n$. Moreover, let
    $$\Omega_1:= \left \{x_0 \in \Omega: \limsup_{\rho \searrow 0} \fint \limits_{\ball(x_0, \rho)} \left| f(x) - (f)_{\ball(x_0, \rho)}\right |^p \dd{x} >0\right \},$$
    $$\Omega_2:= \left \{x_0 \in \Omega: \limsup_{\rho \searrow 0} \left | (f)_{\ball(x_0, \rho)}\right | = \infty \right \}.$$
Then we have
$$\dim_{\mathscr H}(\Omega_1) \le n -\theta p \quad \text{ and } \quad \dim_{\mathscr H}(\Omega_2) \le n -\theta p.$$
\end{prop}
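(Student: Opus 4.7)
The plan is to apply the measure density lemma (Lemma \ref{lem:measuredensity}) to the set function on open subsets $O \subset \Omega$ defined by
\[
\lambda(O) := [f]^p_{\sobo^{\theta,p}(O; \R^N)} = \iint_{O \times O} \frac{|f(x)-f(y)|^p}{|x-y|^{n+\theta p}} \dd{x}\dd{y}.
\]
First I would verify its hypotheses: $\lambda(\Omega) < \infty$ since $f \in \sobo^{\theta,p}(\Omega; \R^N)$; monotonicity in $O$ is immediate; and countable super-additivity on pairwise disjoint open subsets holds because the products $O_i \times O_i$ are pairwise disjoint in $(\bigcup_i O_i) \times (\bigcup_i O_i)$.

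For the bound on $\Omega_1$ the key estimate is a fractional Poincaré inequality, which follows at once from Jensen applied to the inner average together with the crude bound $|x-y| \leq 2\rho$ inserted into the Gagliardo kernel:
\[
\int_{\Omega(x_0,\rho)} |f - (f)_{\Omega(x_0,\rho)}|^p \dd{x} \leq C\,\rho^{\theta p}\, \lambda(\Omega(x_0,\rho)).
\]
Dividing by $|\Omega(x_0,\rho)| \gtrsim \rho^n$ (uniformly in $x_0$, by the Lipschitz regularity of $\partial \Omega$) yields $\fint_{\Omega(x_0,\rho)} |f - (f)_{\Omega(x_0,\rho)}|^p \dd{x} \leq C \rho^{\theta p - n} \lambda(\Omega(x_0,\rho))$, hence $\Omega_1 \subset E^{n-\theta p}$ in the notation of Lemma \ref{lem:measuredensity}. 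The dimension bound $\dim_{\mathscr{H}}(\Omega_1) \leq n - \theta p$ then follows directly.

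For $\Omega_2$ I would run a telescoping argument on dyadic radii $\rho_j = 2^{-j}\rho_0$. Writing $\Omega_j := \Omega(x_0, \rho_j)$ and combining the two-ball comparison $|(f)_{\Omega_{j+1}} - (f)_{\Omega_j}| \leq C \fint_{\Omega_j} |f - (f)_{\Omega_j}| \dd{x}$ with Hölder and the Poincaré estimate above produces
\[
\bigl|(f)_{\Omega_{j+1}} - (f)_{\Omega_j}\bigr| \leq C\, \rho_j^{\theta - n/p}\, \lambda(\Omega_j)^{1/p}.
\]
If $x_0 \notin E^{n-\theta p + \epsilon p}$ for some $\epsilon > 0$, then eventually $\lambda(\Omega_j) \leq \rho_j^{\,n-\theta p + \epsilon p}$, so the right-hand side is bounded by $C\rho_j^\epsilon$, producing a summable geometric series. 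Hence $((f)_{\Omega_j})_j$ is Cauchy and bounded, and interpolating to non-dyadic radii extends the bound to all sufficiently small $\rho > 0$, giving $x_0 \notin \Omega_2$. Contrapositively, $\Omega_2 \subset \bigcap_{\epsilon > 0} E^{n-\theta p + \epsilon p}$, so Lemma \ref{lem:measuredensity} yields $\dim_{\mathscr{H}}(\Omega_2) \leq n - \theta p + \epsilon p$ for every admissible $\epsilon > 0$, and letting $\epsilon \searrow 0$ closes the argument.

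The main obstacle will be the $\Omega_2$ case: the bare condition $x_0 \notin E^{n-\theta p}$ only delivers $\lambda(\Omega(x_0,\rho)) = o(\rho^{n-\theta p})$, which is not strong enough to make the telescoping series summable. The small margin $\epsilon p$ in the exponent is exactly what converts the series into a geometric one, and since $\epsilon$ can be taken arbitrarily small, the sharpness of the final Hausdorff dimension bound is preserved.
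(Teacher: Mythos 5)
Your argument is correct and is essentially the standard proof of this proposition: the paper itself gives no proof (it cites Mingione and Beck), and those sources argue exactly as you do, applying Lemma \ref{lem:measuredensity} to the superadditive set function $O \mapsto [f]^p_{\sobo^{\theta,p}(O;\R^N)}$, with a fractional Poincar\'e inequality handling $\Omega_1$ and a dyadic telescoping estimate with the small exponent margin handling $\Omega_2$. Your closing remark correctly identifies why the $\epsilon p$ margin is needed for the $\Omega_2$ case and why it does not spoil the final bound.
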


In light of the characterization of the singular set of a minimizer given by \eqref{eq:singularset}, the primary objective in the proof of Theorem \ref{maintheorem2} is therefore to demonstrate that generalized minimizers of \eqref{variationalproblem} belong to fractional Sobolev spaces.

\section{Improved Sobolev regularity - Proof of Theorem \ref{maintheorem}}\label{sec:proofSobolev}
In this section, we will prove Theorem \ref{maintheorem}. The first step is to obtain a vanishing viscosity approximating sequence for any given generalized minimizer. Here, it is not sufficient to take a stabilized functional of the structure
$$\mathscr{F}_k[u; \Omega]:=\int \limits_\Omega \left (F(x, \D u) + \frac{1}{k^2} |\D u|^2 \right )\dd x$$
and to obtain estimates for the sequence $(u_k)_k$ of unique minimizers of $\mathscr F_k[-; \Omega]$. This strategy, which in the case of regularity for autonomous linear growth functionals, was used in the important works of \textsc{Bildhauer}, see, for example, \cite{Bildhauer02gradient_estimates,Bildhauer03book}, would only lead to estimates for one generalized minimizer; namely the limit of the sequence $(u_k)$ in a suitable topology. However, due to the term involving the recession function in \eqref{eq:integralrepresentation}, the functional at hand is no longer strictly convex, even though the integrand $F$ is. Therefore, generalized minimizers are not unique in general and it is necessary to start with any given minimizer $u$ and construct a corresponding approximating sequence with desirable regularity and convergence properties. This leads to the use of Ekeland's variational principle, see Lemma \ref{lem:Ek_Prin}.\\

Having constructed the approximating sequence $(u_k)$,we use the almost-minimality, given by Ekeland's variational principle, to obtain an Euler-Lagrange type inequality (Lemma \ref{Lemma 3.4}). Testing with second order finite differences of the functions $u_k$ itself, we are able to obtain fractional estimates for the derivatives $\D u_k$. Here, other than in the case of a $\hold^2$-dependence of $F$ on the first variable, we cannot differentiate the Euler-Lagrange equation of the system to obtain second order estimates. This also necessitates using finite differences instead of difference quotients.\\

Using the regularity of $F$ and a suitable auxiliary function, we end up with a Besov-Nikolskiǐ type estimate for the first derivatives. Embeddings into Lebesgue spaces then imply higher integrability in a suitable range of the ellipticity parameter $\mu$. Thereafter, the convergence properties of $(u_k)$, given from Ekeland's variational principle, make it possible to carry the integrability over to the limit $u$. \\

As we mentioned above, we will use the Ekeland variational principle from \cite{Ekeland1974}. We restate it here for the convenience of the reader.
\begin{lemma}\label{lem:Ek_Prin}
    Let $(X,d)$ be a complete metric space and let $F: X \to \R \cup \{+\infty\}$ be a lower semicontinuous functional which is not identically $+\infty$ and bounded from below. If for some $\eps>0$ and some $u \in X$ there holds
    $$F[u] \le \inf_X F +\eps,$$
    then there exists $v \in X$ such that
    \begin{align*}
    F[v] &\le F[u],\\
        d(u,v) &\le \sqrt \eps,\\
        F[v] &< F[w]+ \sqrt \eps d(v,w) \quad \textrm{for all } w \in X\backslash\{v\}.
    \end{align*}
\end{lemma}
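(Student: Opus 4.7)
My plan is to use the classical partial-order argument of Ekeland. On the complete metric space $(X,d)$ I would introduce the relation
$$w \preceq v \quad :\Longleftrightarrow \quad F[w] + \sqrt{\eps}\, d(v,w) \le F[v],$$
and verify, using the triangle inequality, that $\preceq$ is a partial order (antisymmetry follows immediately by adding the two defining inequalities). The three conclusions of the lemma then reduce to finding an element $v \in X$ with $v \preceq u$ that is additionally $\preceq$-minimal: indeed, $v \preceq u$ delivers both $F[v] \le F[u]$ and, because $F[u] \le \inf_X F + \eps$, the bound $\sqrt{\eps}\, d(u,v) \le F[u] - F[v] \le \eps$, i.e.\ $d(u,v)\le\sqrt{\eps}$; and minimality of $v$ means $w \preceq v \Rightarrow w = v$, which is precisely the strict inequality in the third conclusion.

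To produce such a $v$, I would build inductively a $\preceq$-decreasing sequence $(u_k)_{k\ge 0}\subset X$ with $u_0 = u$. Given $u_k$, set $S_k := \{w \in X : w \preceq u_k\}$ and $m_k := \inf_{S_k} F$ (finite because $F$ is bounded below), and pick $u_{k+1}\in S_k$ with $F[u_{k+1}] \le m_k + 2^{-k}$. The relation $u_{k+1}\preceq u_k$ yields $\sqrt{\eps}\, d(u_k,u_{k+1}) \le F[u_k] - F[u_{k+1}]$; since the real sequence $(F[u_k])_k$ is monotone decreasing and bounded below, the telescoping estimate shows $\sum_k d(u_k, u_{k+1}) <\infty$, so $(u_k)$ is Cauchy and, by completeness, converges to some $v \in X$.

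It remains to check that $v \preceq u_k$ for every $k$ (so in particular $v \preceq u$) and that $v$ is $\preceq$-minimal. For the first assertion I would pass $\ell \to \infty$ in $u_\ell \preceq u_k$ using the lower semicontinuity of $F$ together with the continuity of $d(u_k,\cdot)$. For minimality, if $w \preceq v$, transitivity gives $w \preceq u_k$ for every $k$, so $F[w] \ge m_k \ge F[u_{k+1}] - 2^{-k}$; sending $k\to\infty$ and invoking lower semicontinuity once more yields $F[w] \ge F[v]$, and combined with the defining inequality $F[w] + \sqrt{\eps}\, d(v,w) \le F[v]$ this forces $d(v,w) = 0$, hence $w = v$.

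The main obstacle I anticipate is the careful interplay between lower semicontinuity and the two limit passages above: one must resist demanding any continuity of $F$, and it is only the one-sided estimate $F[v]\le \liminf_\ell F[u_\ell]$, combined with the fact that the $\preceq$-inequality already holds along the sequence for every finite index, that makes both limits go through. The choice of the tolerance $2^{-k}$ in the construction is not canonical; any summable null sequence of positive tolerances would work equally well, and there is some flexibility in how to set up the iteration (e.g.\ one could alternatively invoke Zorn's lemma on the partially ordered subsets of $\{w : w \preceq u\}$ to extract a minimal element directly, bypassing the explicit construction).
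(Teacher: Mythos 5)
Your proof is correct: it is the classical iteration argument for Ekeland's variational principle (essentially Ekeland's original proof), and all three conclusions follow as you describe. Note that the paper itself offers no proof of this lemma --- it simply restates the result and cites Ekeland's 1974 paper --- so there is nothing to compare against beyond noting that your argument is the standard one. One small point worth tightening: as defined, $\preceq$ is only antisymmetric on $\{F<\infty\}$ (two distinct points with $F=+\infty$ dominate each other), but this is harmless here, since $F[u]\le\inf_X F+\eps<\infty$ and every element entering your construction satisfies $F[w]\le F[u]$, so the whole argument takes place where $F$ is finite.
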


In the following vanishing viscosity approximation we will stabilize the functional $\mathscr F[u]$ in a way that makes the next two auxiliary lemmas applicable.
\begin{lemma}[{\cite[Lemma 3.2]{GmeinederKristensen19BD}, \cite[Lemma 2.6]{BeckSchmidt13} }]\label{Lemma 3.2}
Let $F: \Omega \times \R^{N \times n} \to \R$ be a Carathéodory function such that $F(x, \cdot)$ is convex for almost all $x \in \Omega$ with
\begin{equation}
    c |z|^p - \theta \le F(x, z) \le \Theta(1+ |z|^p) \label{eq:coerc_bounds}
\end{equation}
for some $p>1$ and for all $z \in \R^{N \times n}$ with constants $c, \theta, \Theta >0$. Given a boundary datum $u_0 \in \sobo^{1,p}(\Omega; \R^N)$, the functional
$$\mathscr F[u]:=\begin{cases} \displaystyle
    \int \limits_\Omega F(x, \D u(x)) \dd{x}, \ &\mathrm{if } \ u \in \sobo^{1,p}_{u_0}(\Omega; \R^N),\\
    +\infty, &\mathrm{if } \ u \in (\sobo^{-1,1}\backslash \sobo^{1,p}_{u_0})(\Omega; \R^N)\\
\end{cases}$$
is lower semicontinuous with respect to norm convergence on $\sobo^{-1,1}(\Omega; \R^N)$.  
\end{lemma}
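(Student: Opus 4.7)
The plan is to establish lower semicontinuity by the direct method: pass to a subsequence along which the functional converges, extract weak compactness from coercivity in $\sobo^{1,p}$, identify the weak limit with the prescribed $\sobo^{-1,1}$-limit, and then invoke the classical weak lower semicontinuity of convex integral functionals on Sobolev spaces.

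First, I would take a sequence $(u_k)_k\subset\sobo^{-1,1}(\Omega;\R^N)$ with $u_k\to u$ in $\sobo^{-1,1}(\Omega;\R^N)$ and assume, without loss of generality, that $\ell:=\liminf_{k\to\infty}\mathscr F[u_k]<\infty$ (otherwise there is nothing to show). Passing to a (non-relabelled) subsequence, I may suppose $\mathscr F[u_k]\to \ell$, and by discarding finitely many terms that $\mathscr F[u_k]<\infty$ for every $k$. By the definition of $\mathscr F$, this forces $u_k\in\sobo^{1,p}_{u_0}(\Omega;\R^N)$ for every $k$, and the lower growth bound in \eqref{eq:coerc_bounds} yields
\begin{equation*}
c\,\|\D u_k\|_{\lebe^p(\Omega;\R^{N\times n})}^p\le \mathscr F[u_k]+\theta\mathscr L^n(\Omega)\le \ell+1+\theta\mathscr L^n(\Omega)
\end{equation*}
for large $k$. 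Since $u_k-u_0\in\sobo^{1,p}_0(\Omega;\R^N)$, the Poincar\'e inequality upgrades this to a uniform bound on $\|u_k\|_{\sobo^{1,p}(\Omega;\R^N)}$.

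Next, using reflexivity of $\sobo^{1,p}(\Omega;\R^N)$ for $p>1$, I extract a further subsequence (again not relabelled) and a function $v\in\sobo^{1,p}_{u_0}(\Omega;\R^N)$ such that $u_k\rightharpoonup v$ weakly in $\sobo^{1,p}(\Omega;\R^N)$. The Rellich--Kondrachov compact embedding gives $u_k\to v$ strongly in $\lebe^p(\Omega;\R^N)$, hence in $\lebe^1(\Omega;\R^N)$; and since any $\lebe^1$-function $w$ may be regarded as the element $T_0=w$, $T_1=\cdots=T_n=0$ in the representation defining $\sobo^{-1,1}(\Omega;\R^N)$, strong $\lebe^1$-convergence implies strong $\sobo^{-1,1}$-convergence. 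Uniqueness of limits in $\sobo^{-1,1}(\Omega;\R^N)$ now forces $u=v\in\sobo^{1,p}_{u_0}(\Omega;\R^N)$. This identification of the two limits, carried out through the intermediate compactness in $\lebe^1$, is the main technical point; the asymmetry between the space prescribing convergence ($\sobo^{-1,1}$) and the space delivering compactness ($\sobo^{1,p}$) is what makes the statement nontrivial.

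It then remains to apply the classical lower semicontinuity theorem for convex integral functionals with Carath\'eodory integrand and $p$-growth (Ioffe's theorem, or the version in \cite{evans_measure_1991}): since $F(x,\cdot)$ is convex $\mathscr L^n$-almost everywhere and satisfies \eqref{eq:coerc_bounds}, weak convergence $u_k\rightharpoonup u$ in $\sobo^{1,p}(\Omega;\R^N)$ yields
\begin{equation*}
\mathscr F[u]=\int_\Omega F(x,\D u)\dd{x}\le \liminf_{k\to\infty}\int_\Omega F(x,\D u_k)\dd{x}=\lim_{k\to\infty}\mathscr F[u_k]=\ell.
\end{equation*}
Since $\ell=\liminf_{k\to\infty}\mathscr F[u_k]$ for the original sequence, this gives $\mathscr F[u]\le\liminf_{k\to\infty}\mathscr F[u_k]$ and completes the argument.
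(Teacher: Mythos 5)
Your proposal is correct and follows essentially the same route as the paper's proof: coercivity from \eqref{eq:coerc_bounds} plus Poincar\'e gives weak $\sobo^{1,p}$ compactness, the compact embedding into $\lebe^p$ (hence $\lebe^1\hookrightarrow\sobo^{-1,1}$) identifies the weak limit with the $\sobo^{-1,1}$-limit, and the classical convexity-based lower semicontinuity theorem finishes the argument. The only cosmetic difference is that the paper verifies the boundary condition of the weak limit via continuity of the trace operator, whereas you rely on weak closedness of the Dirichlet class, which is equally valid.
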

\begin{proof}
    The proof is similar to the one in \cite[Lemma 3.2]{GmeinederKristensen19BD}, but we will give it here for sake of completeness. Let $(u_k)_k \subset \sobo^{-1,1}(\Omega; \R^N)$ be a sequence with $u_k \to u$ in $\sobo^{-1,1}(\Omega; \R^N)$ for $k \to \infty$. If $ \liminf_{k \to \infty} \mathscr F[u_k]=+\infty$ there is nothing to show. Therefore we may assume that there is a subsequence $(u_k)_k$ (not relabeled), such that \[
    \lim \limits_{k \to \infty} \mathscr{F}[u_k] = \liminf \limits_{k \to \infty} \mathscr{F}[u_k] < \infty.
    \]
    Due to \eqref{eq:coerc_bounds} we deduce that $\D u_k$ is uniformly bounded in $\lebe^p$ and by Poincaré's inequality so is $u_k$, meaning we have a subsequence $u_{k_l}$ which converges weakly to some $v \in \sobo^{1,p}(\Omega; \R^N)$. Due to the continuity of the trace operator with respect to weak convergence in $\sobo^{1,p}(\Omega; \R^N)$ we even have $v \in \sobo_{u_0}^{1,p}(\Omega; \R^N)$. Since $\Omega$ is Lipschitz we have $u_{k_l} \to v$ strongly in $\lebe^p(\Omega; \R^N)$ by the compact embedding $\sobo^{1,p}(\Omega; \R^N) \hookrightarrow\hookrightarrow \lebe^p(\Omega; \R^N)$. We conclude $u = v$ $\mathscr{L}^n$-a.e. because of the embeddings
    $$\lebe^p(\Omega; \R^N) \hookrightarrow \lebe^1(\Omega; \R^N) \hookrightarrow \sobo^{-1,1}(\Omega; \R^N)$$
    on bounded domains. Now using the growth bound in \eqref{eq:coerc_bounds} and the convexity of $F$ in the second argument we can conclude by standard arguments, for example, \cite[Theorem 2.6]{Rindler} that $\mathscr{F}$ is lower semicontinuous with respect to norm convergence on $\sobo^{-1,1}(\Omega; \R^N)$. 
\end{proof}

The next lemma shows how a linear growth integrand can be modified in order to fit into the framework of Lemma \ref{Lemma 3.2}.

\begin{lemma}\label{Lemma 3.3}
Let $F: \Omega \times \R^{N \times n}\to \R$ be a function of linear growth in the sense of Assumption \ref{cond:4}. Then for any $\Theta>0$ there exist constants $C>0$ and $\vartheta>0$ such that
$$\Theta |z |^2 - \vartheta \le F(x,z) + \Theta |z|^2 \le C (1+|z|^2)$$
for all $z \in \R^{N\times n}, x \in \Omega$.
\end{lemma}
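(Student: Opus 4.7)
The statement splits naturally into a lower bound and an upper bound, both of which follow directly from the linear growth hypothesis \ref{cond:4} together with one elementary use of Young's inequality, so there is no real obstacle here. The plan is to verify each inequality separately and read off the constants.

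For the lower bound, the goal is to produce $\vartheta>0$ with $F(x,z)\ge -\vartheta$ uniformly in $x\in\Omega$ and $z\in\R^{N\times n}$; adding $\Theta|z|^2$ to both sides then yields $\Theta|z|^2-\vartheta\le F(x,z)+\Theta|z|^2$. The linear growth assumption gives $F(x,z)\ge c_0|z|-c_2\ge -c_2$ since $c_0>0$ and $|z|\ge 0$, so the choice $\vartheta:=c_2$ is admissible.

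For the upper bound, starting from $F(x,z)\le c_1(1+|z|)$ one obtains
\begin{equation*}
F(x,z)+\Theta|z|^2\le c_1+c_1|z|+\Theta|z|^2.
\end{equation*}
The only nontrivial term is the linear one $c_1|z|$, which I would estimate by the elementary inequality $|z|\le\tfrac12(1+|z|^2)$ (equivalently, Young's inequality with exponents $2,2$). This gives
\begin{equation*}
F(x,z)+\Theta|z|^2\le c_1+\tfrac{c_1}{2}(1+|z|^2)+\Theta|z|^2\le C(1+|z|^2),
\end{equation*}
where $C:=\tfrac{3c_1}{2}+\Theta$ is a valid choice (and depends only on $c_1$ and $\Theta$, as required).

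Combining the two estimates with $\vartheta=c_2$ and $C=\tfrac{3c_1}{2}+\Theta$ proves the lemma. The only point worth emphasizing is that the bound $c_1|z|\le\tfrac12 c_1(1+|z|^2)$ is sharp enough to absorb the linear term into $C(1+|z|^2)$ without having to exploit the added $\Theta|z|^2$, which is why the constant $C$ can be taken independently of the arbitrarily small $\Theta$ appearing on the left-hand side; the only role of the $\Theta|z|^2$ term is to ensure the $p=2$ coercivity required in Lemma \ref{Lemma 3.2}.
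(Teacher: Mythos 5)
Your proof is correct and follows essentially the same route as the paper: both read off the lower bound from $c_0|z|-c_2\ge -c_2$ and absorb the linear term $c_1|z|$ into $C(1+|z|^2)$, the only (immaterial) difference being that the paper does this last step by distinguishing $|z|\le 1$ from $|z|>1$ instead of using $|z|\le\tfrac12(1+|z|^2)$.
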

\begin{proof}
    By the linear growth hypothesis we have
    $$\Theta |z|^2 - c_2 \le c_0|z| - c_2 +\Theta|z|^2 \le F(x,z) + \Theta |z|^2 \le c_1(1+|z|) + \Theta |z|^2$$
    which immediately gives the claim by distinguishing the cases $|z|\le 1$ and $|z|>1$ on the right hand side.
\end{proof}

\subsection{Implementation of the Ekeland approximation}\label{sec:EkelandConstruction}
We closely follow the steps developed in \cite[Chapter 5.1]{BeckSchmidt13} and \cite[pp. 10-12]{GmeinederKristensen19BD} for the construction of a good approximating sequence. Here, the use of Ekeland's variational principle is necessary due to the possible non-uniqueness of generalized minimizers. This phenomenon occurs because of the relaxation of the functional (see \eqref{eq:relaxedFunctional}) even in the case where $F(x, \cdot)$ is strictly convex for all $x \in \Omega$. In constructing an approximating sequence for a given generalized minimizer $u$, we have to use perturbations that are weak enough, so that we can deal with them using the available a priori bounds (see also the comments in Section \ref{sec:negativeSobolevSpace}).\\ 

Let $u \in \vari(\Omega; \R^N)$ be a generalized minimizer of $\mathscr F$ (cf. equation \eqref{eq:gen_min}). From the approximation result Lemma \ref{lem:BildhauerApprox} and from the Reshetnyak continuity theorem (cf. Theorem \ref{thm:Reshetnyak}) we know that
$$ \inf_{u_0 + \sobo^{1,1}_0(\Omega; \R^N)} \mathscr F = \min_{\vari(\Omega; \R^N)} \overline{\mathscr F}_{u_0}.$$
We denote $ \mathscr D:= u_0 + \sobo^{1,1}_0(\Omega; \R^N)$. Again by Lemma \ref{lem:BildhauerApprox} we find a sequence $(w_k) \in \mathscr D$ with $w_k \to u$ in $\lebe^1(\Omega; \R^N)$ and $(\mathscr L^n, \D w_k) \to (\mathscr L^n, \D u)$ strictly as $k \to \infty$. Since all $w_k$ are in $\sobo^{1,1}(\Omega; \R^N)$ we obtain again by using the Reshetnyak continuity theorem that $\mathscr F(w_k) \to \inf \mathscr F[\mathscr D]$ as $k \to \infty$.
Therefore $(w_k)$ is a minimizing sequence for $\mathscr F$. Possibly passing to a non-relabeled subsequence we may conclude that
\begin{align}\label{eq:3.4}
\mathscr F[w_k] \le \inf \mathscr F[\mathscr D] + \frac{1}{8k^2}\end{align}
for all $k \in \mathbb N$. By the $\mu$-ellipticity (and hence convexity) and the linear growth of $F$, we know that $F$ is Lipschitz with some constant $L>0$. For the Dirichlet data, we find a sequence $(u_k^{\partial \Omega}) \subset \sobo^{1,2}(\Omega; \R^N)$ which satisfies
\begin{align}\label{eq:3.5}
\norm{ u_k^{\partial \Omega} -u_0 }_{\sobo^{1,1}(\Omega; \R^N)} \le \frac{1}{8Lk^2}
\end{align}
for all $k \in \mathbb N$. We denote $\mathscr{D}_k:= u_k^{\partial \Omega}+ \sobo^{1,2}_0(\Omega; \R^N)$ and find that, because of $w_k \in u_0 + \sobo^{1,1}_0(\Omega; \R^N)$ there exists a sequence $(v_k) \subset \mathscr D_k$ with
$$\norm{(v_k - u_k^{\partial \Omega})- (w_k - u_0)}_{\sobo^{1,1}(\Omega;\R^N)}\le \frac{1}{8Lk^2}$$
from which we infer
\begin{align}\label{eq:3.6}
    \norm{v_k - w_k}_{\sobo^{1,1}(\Omega; \R^N)} \leq \frac{1}{4Lk^2}
\end{align}
for all $k \in \mathbb N$ by the triangle inequality.
For all $\psi \in \sobo_0^{1,2}(\Omega; \R^N)$ it holds 
\begin{align*}
    \inf_{u_0+\sobo_{0}^{1,2}(\Omega; \R^N)} \mathscr{F} & \leq \mathscr{F} [u_0 + \psi] \nonumber\\ 
    &= \left( \int \limits_\Omega \left[  F(x, \D(u_0 + \psi)) - F(x, \D (u_k^{\partial \Omega} + \psi)) \right] \dd{x} \right) + \int \limits_\Omega F(x, \D (u_k^{\partial \Omega} + \psi)) \dd{x}\\
    & \leq L \int \limits_\Omega |\D (u_0 - u_k^{\partial \Omega})| \dd{x} + \int \limits_\Omega F(x, \D (u_k^{\partial \Omega} + \psi)) \dd{x} \quad \text{ (Lipschitz continuity of  $F$)}\\
    & \leq L \norm{u_k^{\partial \Omega} - u_0}_{\sobo^{1,1}(\Omega; \R^N)} + \int \limits_\Omega F(x, \D (u_k^{\partial \Omega} + \psi)) \dd{x} \nonumber\\
    & \overset{\eqref{eq:3.5}}{\leq} \frac{1}{8k^2} + \int \limits_\Omega F(x, \D (u_k^{\partial \Omega} + \psi)) \dd{x}.
\end{align*}
Taking the infimum over $\psi \in \sobo^{1,2}_0(\Omega; \R^N)$ and using the fact that $u_0 \,+\, \sobo^{1,2}_0(\Omega; \R^N)$ is norm-dense in $\mathscr D = u_0 + \sobo_0^{1,1}(\Omega;\R^N)$ gives
\begin{equation}\label{eq:FDDk}
  \inf_{\mathscr D = u_0 + \sobo_0^{1,1}(\Omega;\R^N)} \mathscr F = \inf_{u_0 + \sobo^{1,2}_0(\Omega, \R^N)} \mathscr F \le \inf_{\mathscr D_k = u_k^{\partial \Omega} + \sobo^{1,2}_0(\Omega;\R^N)} \mathscr F + \frac{1}{8k^2}.  
\end{equation}
Moreover, by using \eqref{eq:3.6} and the Lipschitz bound on $F$ we get 
\begin{equation}\label{eq:lip_Fvw}
    \mathscr F [v_k] - \mathscr F [w_k] = \int \limits_\Omega \left[  F(x, \D v_k) - F(x, \D w_k) \right] \dd{x} \leq L \int \limits_\Omega |\D (v_k - w_k)| \dd{x} \leq \frac{1}{4k^2}.
\end{equation}
Furthermore by using \eqref{eq:3.4}, \eqref{eq:FDDk} and \eqref{eq:lip_Fvw}, we obtain
\begin{align}\label{eq:3.8}
\mathscr F [v_k] \overset{\eqref{eq:lip_Fvw}}{\leq} \mathscr F [w_k] + \frac{1}{4k^2} \overset{\eqref{eq:3.4}}{\leq} \inf \mathscr F[\mathscr D] + \frac{3}{8k^2} \overset{\eqref{eq:FDDk}}{\leq} \inf \mathscr F[\mathscr D_k] + \frac{1}{2k^2}
\end{align}
for all $k \in \mathbb N$.\\

Now we define integrands $F_k$, which are regularized versions of $F$. Let 
\begin{equation}\label{eq:3.9}
    \begin{aligned}
  F_k(x, z) &:= F(x, z) + \frac{1}{2k^2A_k}(1+|z|^2),\\ 
 \text{ where} \quad A_k &:= 1 + \int \limits_\Omega (1+|\D v_k|^2) \dd{x}.  
\end{aligned}
\end{equation}
With these functions we can define
\begin{equation}\label{def:scrF_k}
 \mathscr F_k [w] :=  \begin{cases} \displaystyle
    \int \limits_\Omega F_k(x, \D w) \dd{x}, \ &\text{provided } w \in \mathscr D_k,\\
    + \infty, \ &\text{provided } w \in \sobo^{-1,1}(\Omega; \R^N)\backslash \mathscr D_k.\\
\end{cases}   
\end{equation}
We aim to apply Lemma \ref{Lemma 3.2} for each $k \in \mathbb N$ and for $p = 2$. We note that the conditions specified in Lemma \ref{Lemma 3.2} are met as a consequence of Lemma \ref{Lemma 3.3}. Hence, all $\mathscr F_k$ are lower semicontinuous with respect to norm convergence in $\sobo^{-1,1}(\Omega; \R^N)$. Now we compute for $v_k \in \mathscr D_k$
\begin{align}\label{eq:explanationF_kF}
    \mathscr F_k[v_k] = \int \limits_{\Omega} \left[  F(x, \D v_k) + \frac{1}{2k^2A_k}(1+| \D v_k |^2) \right] \dd{x} \overset{\eqref{eq:3.9}_2}{\leq} \mathscr F [v_k] + \frac{1}{2k^2}.
\end{align}
Then by \eqref{eq:3.8}, we further obtain
\begin{equation*}
    \mathscr F [v_k] + \frac{1}{2k^2} \le \inf \mathscr F[\mathscr D_k] + \frac{1}{k^2}.
\end{equation*}
Observe that by the definition \eqref{def:scrF_k} of $\mathscr F_k$, we have $\inf \mathscr F[\mathscr D_k] \leq \inf \mathscr F_k[\sobo^{-1,1}(\Omega; \R^N)]$. By combining all these ineequalities, we arrive at
\begin{align}\label{eq:3.10}
\mathscr F_k [v_k] \overset{\eqref{eq:explanationF_kF}}{\leq} \mathscr F [v_k] + \frac{1}{2k^2} \overset{\eqref{eq:3.8}}{\leq} \inf \mathscr F[\mathscr D_k] + \frac{1}{k^2} \le \inf \mathscr F_k[\sobo^{-1,1}(\Omega; \R^N)] + \frac{1}{k^2}.
\end{align}
This establishes that $v_k$ is an almost-minimizer of the functional $\mathscr F_k$. We are now in place to apply Ekeland's variational principle (cf. Lemma \ref{lem:Ek_Prin}) in the Banach space $\sobo^{-1,1}(\Omega; \R^N)$ to obtain a sequence $(u_k) \subset \sobo^{-1,1}(\Omega; \R^N)$ such that
\begin{align}\label{eq:3.11}
\lVert u_k - v_k \rVert_{\sobo^{-1,1}(\Omega; \R^N)} &\le \frac 1k, \notag \\ 
\mathscr F_k [u_k] &\le \mathscr F_k [w] + \frac 1k \lVert w - u_k \rVert_{\sobo^{-1,1}(\Omega; \R^N)} \quad  \forall w \in \sobo^{-1,1}(\Omega; \R^N), \ k \in \mathbb N.
\end{align}
We apply the second part of \eqref{eq:3.11} to $w=v_k$ to obtain
\begin{align*}
    \int \limits_\Omega |\D u_k| \dd{x} & \overset{\ref{cond:4}}{\leq} \frac{1}{c_0} (\mathscr{F}[u_k] + c_2\mathscr L^n(\Omega) ) \leq \frac{1}{c_0} (\mathscr{F}_k[u_k] + c_2\mathscr L^n(\Omega)) \\
    & \overset{\eqref{eq:3.11}_2}{\leq} \frac{1}{c_0} (\mathscr{F}_k[v_k] + \frac{1}{k} \norm{v_k - u_k}_{\sobo^{-1,1}(\Omega, \R^N)} + c_2\mathscr L^n(\Omega)) \\
    & \overset{\eqref{eq:3.11}_1}{\leq} \frac{1}{c_0} \left(\mathscr{F}_k [v_k] + \frac{1}{k^2} + c_2\mathscr L^n(\Omega) \right) \\
    & \overset{\eqref{eq:3.10}}{\leq} \frac{1}{c_0} \left(\inf \mathscr{F}_k [\sobo^{-1,1}(\Omega; \R^N)] + \frac{2}{k^2} + c_2\mathscr L^n(\Omega) \right) \leq C,
\end{align*}
where $C > 0$ is a finite constant independent of $k \in \N$. To see this, we use that $\inf \mathscr F_k[\sobo^{-1,1}(\Omega; \R^N)] <\infty$ which follows from the fact that clearly $\inf \mathscr F_k[\mathscr D_k] < \infty$ and that $\mathscr F_k$ is non-increasing in $k$.\\

In particular we can deduce that $(u_k)$ is uniformly bounded in $\sobo^{1,1}(\Omega; \R^N)$ because we have by the Poincaré inequality
\begin{align*}
    \int \limits_\Omega |u_k| \dd{x} \le \int \limits_\Omega \left(|u_k -u_k^{\partial \Omega}| + |u_k^{\partial \Omega}| \right)\dd{x} \le \int \limits_\Omega \left(|\D u_k| + |\D u_k^{\partial \Omega}| \right)\dd{x} + \int \limits_\Omega |u_k^{\partial \Omega}| \dd{x} \le C
\end{align*}
where the uniform boundedness of $u_k^{\partial \Omega}$ in $\sobo^{1,1}(\Omega; \R^N)$ follows from \eqref{eq:3.5}. Hence both $|u_k|$ and $|\D u_k|$ are uniformly $\lebe^1$-bounded and we have shown
\begin{equation}\label{eq:3.13}
    u_k \text{ is uniformly bounded in } \sobo^{1,1}(\Omega; \R^N).
\end{equation}
The functions $u_k$ are not necessarily minimizers of the modified functional $\mathscr F_k$, but they satisfy an Euler-Lagrange type inequality.
\begin{lemma}[Approximate Euler-Lagrange]\label{Lemma 3.4}
Let $F_k$ and $u_k$ be defined as above. For all $k \in \mathbb N$ we have
    \begin{equation}\label{eq:3.14}
        \left| \int \limits_\Omega \langle \D_z F_k(x,\D u_k), \D\varphi \rangle \dd{x} \right| \leq \frac{1}{k} \norm{\varphi}_{\sobo^{-1,1}(\Omega; \R^N)}
    \end{equation}
for all $\varphi \in \sobo^{1,2}_0(\Omega; \R^N)$.
\end{lemma}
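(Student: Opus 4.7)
The plan is to derive the stated inequality directly from the almost-minimality property in the second line of \eqref{eq:3.11}, by testing it with the perturbation $w = u_k + t\varphi$ for $\varphi \in \sobo^{1,2}_0(\Omega;\R^{N})$ and a scalar parameter $t \in \R$, and then letting $t \to 0$. The key technical point will be showing that the one-sided derivative of $t \mapsto \mathscr{F}_k[u_k + t\varphi]$ at $t=0$ equals the integral $\int_\Omega \langle \D_z F_k(x,\D u_k), \D\varphi\rangle \dd{x}$ and can be bounded by the right-hand side $\tfrac{1}{k}\|\varphi\|_{\sobo^{-1,1}}$ up to the desired absolute value.

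First I would check admissibility: from \eqref{eq:3.11} applied once at $w=v_k$ together with $v_k \in \mathscr{D}_k$ one has $\mathscr{F}_k[u_k] \le \mathscr{F}_k[v_k] + \tfrac{1}{k^2} < \infty$, so by the definition \eqref{def:scrF_k} of $\mathscr{F}_k$ necessarily $u_k \in \mathscr{D}_k = u_k^{\partial\Omega} + \sobo^{1,2}_0(\Omega;\R^{N})$. Since $\varphi \in \sobo^{1,2}_0(\Omega;\R^{N})$, the perturbation $u_k + t\varphi$ also lies in $\mathscr{D}_k$ for every $t \in \R$, hence both sides are finite and \eqref{eq:3.11} yields
\begin{equation*}
\mathscr{F}_k[u_k] - \mathscr{F}_k[u_k + t\varphi] \le \frac{|t|}{k}\,\|\varphi\|_{\sobo^{-1,1}(\Omega;\R^{N})}.
\end{equation*}
Dividing by $t > 0$ and by $t < 0$ respectively produces the two-sided bound
\begin{equation*}
\left|\frac{\mathscr{F}_k[u_k + t\varphi] - \mathscr{F}_k[u_k]}{t}\right| \le \frac{1}{k}\,\|\varphi\|_{\sobo^{-1,1}(\Omega;\R^{N})}, \qquad t\neq 0.
\end{equation*}

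The remaining and principal task is to justify passing $t \to 0$ in the left-hand side and to identify the limit as $\int_\Omega \langle \D_z F_k(x,\D u_k), \D\varphi\rangle \dd{x}$. For this I use that by Lemma \ref{Lemma 3.3} the modified integrand $F_k(x,\cdot)$ is convex and satisfies the quadratic bound $|F_k(x,z)| \lesssim 1 + |z|^2$, while $\D_z F_k(x,z) = \D_z F(x,z) + \tfrac{1}{k^2 A_k}z$ has linear growth, with $\D_z F$ bounded by the Lipschitz constant $L$ coming from the linear growth and convexity of $F$. Pointwise the difference quotient $\tfrac{1}{t}[F_k(x,\D u_k + t\D\varphi) - F_k(x,\D u_k)]$ converges to $\langle \D_z F_k(x,\D u_k), \D\varphi\rangle$ as $t\to 0$. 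Convexity of $F_k(x,\cdot)$ makes the one-sided difference quotients monotone in $|t|$; combined with the quadratic growth, for $|t|\le 1$ they are dominated by an $L^1$-function of the form $C(1 + |\D u_k|^2 + |\D\varphi|^2)$, which is integrable since $u_k, \varphi \in \sobo^{1,2}(\Omega;\R^{N})$. The dominated convergence theorem therefore yields
\begin{equation*}
\lim_{t\to 0}\frac{\mathscr{F}_k[u_k + t\varphi] - \mathscr{F}_k[u_k]}{t} = \int_\Omega \langle \D_z F_k(x,\D u_k), \D\varphi\rangle \dd{x},
\end{equation*}
and passing to the limit in the previous displayed inequality produces \eqref{eq:3.14}. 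The main obstacle is strictly the dominated convergence step; once one notices that $F_k$ is both convex and of quadratic upper growth, the monotone-difference-quotient argument supplies the required $L^1$-bound, and the remainder of the proof is purely algebraic.
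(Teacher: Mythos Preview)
Your proof is correct and follows essentially the same route as the paper's (test \eqref{eq:3.11} with $w = u_k \pm \varepsilon\varphi$, divide, and pass to the limit), with the added merit of spelling out the dominated-convergence justification that the paper leaves implicit. One small slip: the claimed two-sided bound $\bigl|\tfrac{\mathscr{F}_k[u_k+t\varphi]-\mathscr{F}_k[u_k]}{t}\bigr|\le \tfrac{1}{k}\|\varphi\|_{\sobo^{-1,1}}$ for all $t\ne 0$ does not actually hold before passing to the limit --- from \eqref{eq:3.11} you only obtain $\tfrac{\mathscr{F}_k[u_k+t\varphi]-\mathscr{F}_k[u_k]}{t}\ge -\tfrac{1}{k}\|\varphi\|$ for $t>0$ and $\le \tfrac{1}{k}\|\varphi\|$ for $t<0$ --- but since the G\^{a}teaux derivative exists by your dominated-convergence argument, these two one-sided inequalities combine at $t=0$ to yield \eqref{eq:3.14}.
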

\begin{proof}
    Fix $k \in \mathbb N$ and let $\varphi \in \sobo^{1,2}_0(\Omega; \R^N)$ be arbitrary. For every $\varepsilon>0$, by $u_k \in \mathscr D_k$ we have $u_k \pm \varepsilon \varphi \in \mathscr D_k$. Using the second line of \eqref{eq:3.11} we get
    $$\int \limits_\Omega \big (F_k(x, \D u_k) - F_k(x, \D u_k \pm \eps \D\varphi)\big ) \dd{x} = \mathscr F_k[u_k] - \mathscr F_k [u_k \pm \varepsilon \varphi] \le \frac{\varepsilon}{k} \norm{\varphi}_{\sobo^{-1,1}(\Omega, \R^N)}.$$
    Dividing by $\varepsilon >0$ and letting $\varepsilon \searrow 0$ we arrive at the claim.
\end{proof}

\begin{remark}
    We briefly comment here on the advantages of this specific way of obtaining an approximating sequence.
    
    Firstly, the Euler-Lagrange-type inequality \eqref{eq:3.14} can now be used to obtain estimates for fractional difference quotients. This is due to the $\sobo^{-1,1}$-norm on the right hand side, which enables us to use the uniform boundedness of the sequence $(u_k)$ in $\sobo^{1,1}(\Omega; \R^N)$ and the remarks on the norm of $\sobo^{-1,1}(\Omega; \R^N)$ in Section \ref{sec:negativeSobolevSpace} to control the right hand side of \eqref{eq:3.14} uniformly in $k$ upon testing with second order finite differences of $u_k$.
    
    Secondly, by construction we have $u_k \to u$ in $\sobo^{-1,1}(\Omega; \R^N)$. From \eqref{eq:3.13} we obtain a subsequence of $(u_k)_k$ converging weakly$^*$ in $\bv(\Omega; \R^N)$ and the convergence in $\sobo^{-1,1}(\Omega; \R^N)$ identifies $u$ as the weak$^*$ limit. This entails especially the weak$^*$ convergence of $\D u_k$ to $\D u$ as finite Radon measures. This leads to the limit $u$ inheriting suitable uniform estimates on the $u_k$ at the end of the proof of Theorem \ref{maintheorem}. Since we started from an arbitrary generalized minimizer, we will thereby obtain a result for every $u \in \operatorname{GM}(\mathscr{F}; u_0)$.
\end{remark}

\subsection{Fractional estimates for the approximating sequence}
We now proceed with the proof of Theorem \ref{maintheorem}. Fix an arbitrary generalized minimizer $u \in \operatorname{GM}(\mathscr{F}; u_0)$ and consider the sequence $(u_k)$ just constructed, cf. \eqref{eq:3.11}. For every $k \in \mathbb{N}$, the function $u_k$ satisfies the Euler-Lagrange type inequality \eqref{eq:3.14}. 

Given $x_0 \in \Omega$ and radii $0 < r < R < \text{dist}(x_0, \partial \Omega)$, we select a cut-off function $\rho \in \hold^1_c(\Omega; [0, 1])$ such that $\mathbbm{1}_{\ball(x_0, r)} \leq \rho \leq \mathbbm{1}_{\ball(x_0, R)}$. 

Since the regularity assertion of Theorem \ref{maintheorem} is local, we may assume without loss of generality that $\Omega$ is connected. For $0 < |h| < \text{dist}(\partial \ball(x_0, R); \partial \Omega)$ and $s \in \{1, \ldots, n\}$, define $\varphi$ by
\[
\varphi := \tau_{s, -h}(\rho^2 \tau_{s, h}(u_k)) \in \sobo_0^{1,2}(\Omega; \mathbb{R}^N),
\]
which will serve as a test function in the approximate Euler-Lagrange inequality \eqref{eq:3.14}. 
Since the differential operator $\D$ and the translation operator $\tau_{s,h}$ commute, we can write
\begin{equation}\label{eq:3.20}
\left| \int \limits_{\Omega} \left\langle \D_z F_k(x, \D u_k), \tau_{s, -h}(\D (\rho^2 \tau_{s, h} u_k)) \right\rangle \, \mathrm{d}x \right| \leq \frac{1}{k} \norm{\tau_{s, -h}(\rho^2 \tau_{s, h} u_k)}_{\sobo^{-1,1}(\Omega; \mathbb{R}^N)}.
\end{equation}
Using $A_k$ from the definition in \eqref{eq:3.9}, we apply discrete integration by parts to \eqref{eq:3.20} and use the product rule to derive the following inequality:
\begin{align}
    \mathit{I} &:= \int \limits_{\Omega} \langle \tau_{s,h} \D_z F_k(x,\D u_k), \rho^2 \tau_{s,h} \D u_k \rangle \dd{x} \nonumber\\
    &\leq \left| \int \limits_{\Omega} \langle \tau_{s,h}  \D_z F_k(x,\D u_k), 2 \rho \D \rho \otimes \tau_{s,h} u_k \rangle \dd{x} \right| +  \frac{1}{k} \norm{\tau_{s,-h}(\rho^2 \tau_{s,h}u_k)}_{\sobo^{-1,1}(\Omega; \R^N)} \nonumber \\
    &\leq \left| \int \limits_{\Omega} \langle \D_z F_k(x + he_s,\D u_k(x + he_s)) - \D_z F_k(x,\D u_k(x + he_s)), 2 \rho \D \rho \otimes \tau_{s,h} u_k \rangle \dd{x} \right| \nonumber\\
    & \hspace{1cm} + \left| \int \limits_{\Omega} \langle \D_z F_k(x,\D u_k(x + he_s)) - \D_z F_k(x,\D u_k(x)), 2 \rho \D \rho \otimes \tau_{s,h} u_k \rangle \dd{x} \right| \nonumber \\
    &\hspace{3cm} + \frac{1}{k} \norm{\tau_{s,-h}(\rho^2 \tau_{s,h}u_k)}_{\sobo^{-1,1}(\Omega; \R^N)} \nonumber \\ 
    &\leq \left| \int \limits_{\Omega} \langle \D_z F(x + he_s,\D u_k(x + he_s)) - \D_z F(x,\D u_k(x + he_s)), 2 \rho \D \rho \otimes \tau_{s,h} u_k \rangle \dd{x} \right| \nonumber\\
    & \hspace{1cm} + \left| \int \limits_{\Omega} \langle \D_z F(x,\D u_k(x + he_s)) - \D_z F(x,\D u_k(x)), 2 \rho \D \rho \otimes \tau_{s,h} u_k \rangle \dd{x} \right| \nonumber \\
    &\hspace{1.5cm} +  \frac{1}{A_k k^2} \left| \int \limits_{\Omega} \langle \tau_{s,h} \D u_k, 2 \rho \D \rho \otimes \tau_{s,h} u_k \rangle \dd{x} \right|+ \frac{1}{k} \norm{\tau_{s,-h}(\rho^2 \tau_{s,h}u_k)}_{\sobo^{-1,1}(\Omega; \R^N)}  \nonumber \\ 
    &=: \mathit{II} + \mathit{III} + \mathit{IV} + \mathit{V}. \label{sum_of_four}
\end{align}

Now we will bound the terms individually from above:
\begin{itemize}
    \item On $\mathit{II}$: Using the H\"{o}lder continuity of $\D_z F(\cdot, z)$ (see Condition \ref{cond:3}), we obtain
\begin{equation*}
    \mathit{II} \leq  C|h|^{\alpha}\ \int \limits_{\Omega} |\rho \D \rho \otimes \tau_{s,h} u_k| \, \mathrm{d}x  \leq C|h|^{\alpha}   \int \limits_{\ball(x_0,R)} |\D \rho \otimes \tau_{s,h} u_k| \, \mathrm{d}x.
\end{equation*}
Now by standard theory of difference quotients in $\sobo^{1,1}(\Omega; \R^N)$ (see also \cite[Lemma 2.2]{Mingione03}) we obtain
\begin{equation*}
    \int \limits_{\ball(x_0,R)} | \tau_{s,h} u_k| \, \mathrm{d}x  \leq C |h| \int \limits_{\ball(x_0,R')} |Du_k| \, \mathrm{d}x \leq C |h|.
\end{equation*}
Therefore we have
\begin{equation*}
    \mathit{II} \leq C |h|^{\alpha + 1}.
\end{equation*}
\item On $\mathit{III}$: Due to the convexity and the linear growth of $F$, the function $F$ is Lipschitz continuous with respect to the second variable for any fixed $x_0 \in \Omega$. Using \ref{cond:3}, we obtain for any $x \in \Omega, z \in \R^{N\times n}$:
$$|\D_zF(x,z)|\le |\D_zF(x,z)-\D_zF(x_0, z)|+ |D_zF(x_0, z)| \le C \mathrm{diam}(\Omega)^\alpha + C \le \tilde C.$$
Hence $|\D_zF(x,z)|$ is uniformly bounded independent of $x$ and $z$ and we may estimate
    \begin{equation*}
        \left|  \D_z F(x,\D u_k(x + he_s)) - \D_z F(x,\D u_k(x)) \right| \leq M \quad \text{for some } M > 0,
    \end{equation*}
    with $M$ independent of $x$ and $k$. Consequently, by standard estimates on difference quotients for functions in $\sobo^{1,1}$, we obtain
    \begin{align*}
       \mathit{III} 
       \leq CM |h| \int \limits_{\ball(x_0, R)} |\Delta_{s,h} u_k| \, \mathrm{d}x
       \leq C M |h| \norm{u_k}_{\sobo^{1,1}(\Omega, \mathbb{R}^N)}.
    \end{align*}
    Thus, we conclude
    \begin{equation*}
       \mathit{III} \leq C M |h|,
    \end{equation*}
    since $u_k$ is uniformly bounded in $\sobo^{1,1}(\Omega; \mathbb{R}^N)$.
    \item On $\mathit{IV}$: Applying the Cauchy-Schwarz and Young inequality, we find for $\delta > 0$ sufficiently small,
    \begin{align}
        \mathit{IV} &\leq \frac{\delta}{A_k k^2} \int \limits_{\Omega} |\rho \tau_{s,h} \D u_k|^2 \dd{x} + \frac{C(\delta)}{A_k k^2} \int \limits_{\Omega} |\D \rho \otimes \tau_{s,h} u_k|^2 \dd{x} \nonumber \\
        &= \frac{\delta}{A_k k^2} \int \limits_{\Omega} |\rho \tau_{s,h} \D u_k|^2 \dd{x} + \frac{C(\delta,\rho)|h|^2}{A_k k^2} \int \limits_{\ball(x_0,R)} |\Delta_{s,h} u_k|^2 \dd{x} \nonumber \\
        &\leq \frac{\delta}{A_k k^2} \int \limits_{\Omega} |\rho \tau_{s,h} \D u_k|^2 \dd{x} + \frac{C(\delta,\rho)|h|^2}{A_k k^2} \int \limits_{\Omega} |\partial_s u_k|^2 \dd{x} . \nonumber
    \end{align}

    Proceeding from here, we apply the definition of $\mathscr{F}_k$ and \eqref{eq:3.11}\textsubscript{2} with $w = v_k$ to derive
    \begin{align}\label{eq:BoundForIV}
        \frac{1}{2k^2A_k} \int \limits_{\Omega} |\partial_s u_k|^2 \dd{x} &\leq \frac{1}{2k^2A_k} \int \limits_{\Omega} (1 + |\D u_k|^2) \dd{x}\nonumber \\
        &\leq c(\Omega) \left (\mathscr{F}_k[u_k]+c_2\mathscr L^n(\Omega) \right )\nonumber \\
        &\overset{\eqref{eq:3.11}_2}{\leq} \left(\mathscr{F}_k[v_k] + \frac{1}{k} \norm{v_k-u_k}_{\sobo^{-1,1}(\Omega; \R^N)}+c_2\mathscr L^n(\Omega) \right)\nonumber \\
        &\leq C(\Omega) < \infty
    \end{align}
    with the constant $c_2 \ge 0$ from the linear growth assumption \ref{cond:4}. Consequently, we obtain
    \[
    \mathit{IV} \leq \frac{\delta}{A_k k^2} \int \limits_\Omega |\rho \tau_{s,h} \D u_k|^2  \dd{x} + C(\delta, \rho, \Omega)|h|^2 =: \mathit{IV}_1^{(\delta)} + \mathit{IV}_2^{(\delta)}.
    \]
    \item On $\mathit{V}$: We estimate using standard difference quotient estimates, equations \eqref{Dw-11}, \eqref{eq:DifferenceQuotientSecondOrder}, as well as the definition of $\rho$ and the uniform boundedness of $u_k$ in $\sobo^{1,1}(\Omega;\R^N)$:
\begin{align}
    \mathit{V} &= \frac{1}{k} \norm{\tau_{s, -h} (\rho^2 \tau_{s,h} u_k)}_{\sobo^{-1,1}(\Omega; \R^N)} = \frac{|h|^{2}}{k} \norm{\Delta_{s,-h} \left(\rho^2 \Delta_{s,h} u_k \right)}_{\sobo^{-1,1}(\Omega_h; \R^N)} \notag \\
    &\leq \frac{|h|^{2}}{k} \norm{\rho^2 \Delta_{s,h} u_k}_{\lebe^1(\Omega; \R^N)} \notag \\ 
    &\leq C \frac{|h|^{2}}{k} \norm{u_k}_{\sobo^{1,1}(\Omega; \R^N)} \leq C \frac{|h|^{2}}{k}. \label{3.26}
\end{align}
\end{itemize}

Now, we proceed to establish a lower bound for $\mathit{I}$. To achieve this, let $\mu$ be the ellipticity exponent from the theorem. We derive estimates by leveraging the H\"{o}lder continuity of $\D_z F(\cdot, z)$ and the definition of $F_k$:
\begin{align*}
    \mathit{I} &= \int \limits_{\Omega} \langle \tau_{s,h} \D_z F_k(x,\D u_k), \rho^2 \tau_{s,h} \D u_k \rangle \dd{x} \nonumber\\
    &= \int \limits_{\Omega} \langle \D_z F_k(x + he_s,\D u_k(x + he_s)) - \D_z F_k(x,\D u_k(x + he_s)), \rho^2 \tau_{s,h} \D u_k \rangle \dd{x} \nonumber\\
    & \hspace{1cm} +\int \limits_{\Omega} \langle \D_z F_k(x,\D u_k(x + he_s)) - \D_z F_k(x,\D u_k(x)), \rho^2 \tau_{s,h} \D u_k \rangle \dd{x} \nonumber \\ 
    &\geq -C|h|^\alpha \int \limits_\Omega\left | \rho^2 \tau_{s,h} \D u_k\right | \dd{x}  + \int \limits_{\Omega} \int \limits_0^1 \langle \D^2_z F(x,\D u_k +
    t \tau_{s,h} \D u_k)\rho \tau_{s,h}\D u_k, \rho \tau_{s,h} \D u_k \rangle \dd{t} \dd{x} \nonumber\\
    &\hspace{4cm} + \frac{1}{A_k k^2} \int \limits_{\Omega} |\rho \tau_{s,h} \D u_k|^2 \dd{x} =: \mathit{I'}.\nonumber
\end{align*}
Using the uniform $\sobo^{1,1}$-boundedness of $(u_k)$, the term $\int \limits_\Omega |\rho^2 \tau_{s,h}\D u_k| \dd x$ is uniformly bounded. By making use of the $\mu$-ellipticity for the second term, we conclude
\begin{align}
    \mathit{I'} \ge - C |h|^{\alpha} + \int \limits_\Omega \int \limits_0^1 \frac{\lambda|\rho \tau_{s,h} \D u_k|^2}{(1+ |\D u_k + t \tau_{s,h} \D u_k|^2)^{\mu/2}} \dd{t} \dd{x} + \frac{1}{A_k k^2} \int \limits_{\Omega} |\rho \tau_{s,h} \D u_k|^2 \dd{x}. \label{est-blw-1}
\end{align}

By the Cauchy-Schwarz inequality, for $0 \le t \le 1$ and any $a, b \in \mathbb{R}^{N \times n}$, there exists a constant $C > 0$, independent of $t, a$, and $b$, such that
$$(1 + |a + tb|^2)^{1/2} \le C (1 + |a|^2 + |b|^2)^{1/2}.$$

Using this estimate, we proceed to further estimate \eqref{est-blw-1} from below
\begin{multline*}
    \mathit{I'} \ge -C|h|^{\alpha} + \tilde \lambda \int \limits_\Omega \frac{|\rho \tau_{s,h}\D u_k|^2}{(1+ |\D u_k(x)|^2 + |\D u_k(x+he_s)|^2)^{\mu/2}}\dd{x}  + \frac{1}{A_k k^2} \int \limits_{\Omega} |\rho \tau_{s,h} \D u_k|^2 \dd{x} =: \mathit{I''}.
\end{multline*}
At this point we introduce the notation
\begin{align}\label{Notation}
M_{h,s}(x) := 1 + |\D u_k(x+he_s)|^2 + |\D u_k(x)|^2.
\end{align}
For $M \in \N$, we will work with the auxiliary function $V_\kappa: \mathbb{R}^{M} \to \mathbb{R}^{M}$ given by 
$$V_\kappa(\xi) := (1+ |\xi|^2)^{\frac{1-\kappa}{2}}\xi, \quad \xi \in \mathbb{R}^M.$$
We remark that the comparison estimate
\begin{align}\label{eq:comparison}
|\tau_{s,h}V_\kappa(v(x))| \sim (1 + |v(x+he_s)|^2 + |v(x)|^2)^{\frac{1-\kappa}{2}} |\tau_{s,h} v(x)|
\end{align}
holds for any measurable map $v$ and for $1 < \kappa < 2$, where $e_s \in \mathbb{R}^n$ is a unit vector (see \cite[Lemma 2.4]{GmeinederKristensen19BD}). We note that the parameter $\alpha$ in Lemma 2.4 of \cite{GmeinederKristensen19BD} corresponds to the parameter  $\kappa$ in our current setup. This allows us to derive
\begin{multline*}
    \mathit{I} \ge \mathit{I''} \ge -C|h|^{\alpha} + c(\tilde \lambda) \int \limits_\Omega \frac{|\rho \tau_{h,s} V_\kappa(\D u_k(x))|^2}{M_{h,s}(x)^{\frac{2(1-\kappa)+\mu}{2}}}\dd{x} + \frac{1}{A_k k^2} \int \limits_{\Omega} |\rho \tau_{s,h} \D u_k|^2 \dd{x}   =: \mathit{I}_1'' + \mathit{I}_2'' + \mathit{I}_3''.
\end{multline*}

The term $\mathit{I}_1'' = -C|h|^{\alpha}$ is moved from the lower bound for $\mathit{I}$ onto the right-hand side. If $\delta < 1$, we can absorb $\mathit{IV}_1^{(\delta)}$ into $\mathit{I}_3''$, 
leading to the inequality
\begin{multline*}
    c(\tilde \lambda) \int_\Omega \frac{|\rho \tau_{h,s} V_\kappa(\D u_k(x))|^2}{M_{h,s}(x)^{\frac{2(1-\kappa) + \mu}{2}}} \, \mathrm{d}x + \frac{1 - \delta}{A_k k^2} \int_{\Omega} |\rho \tau_{s,h} \D u_k|^2 \, \mathrm{d}x \\
    \leq C\left( |h|^{1+ \alpha}+|h|+  \frac{|h|^2}{k} +|h|^\alpha \right) + C(\delta, p, \Omega) |h|^2.
\end{multline*}
Since the second term on the left hand side is finite and we may assume without loss of generality that $|h|<1$, we can divide by $|h|^{ \alpha}$ to obtain
\[
\int_\Omega \frac{|\rho \tau_{h,s} V_\kappa(\D u_k(x))|^2}{|h|^{ \alpha} M_{h,s}(x)^{\frac{2(1-\kappa) + \mu}{2}}} \, \mathrm{d}x \le C < \infty,
\]
where $C$ does not depend on $k$. Therefore, we have arrived at
\begin{align}\label{3.31.1}
\sup_{k \in \N} \int \limits_\Omega \left \lvert \frac{\rho \tau_{s,h}V_\kappa(\D u_k(x))}{|h|^{\frac{\alpha}{2}}}\right \rvert^2 \frac{1}{\left(1+ |\D u_k(x+he_s)|^2 + |\D u_k(x)|^2\right)^{\frac{2(1-\kappa) + \mu}{2}}} \dd{x} <\infty.
\end{align}
Now, to establish higher integrability of the gradients $\D u_k$, 
 we proceed by utilizing \eqref{eq:comparison} and obtain
\begin{align}\label{eq:YoungNikolskii}
    \int \limits_\Omega \rho \frac{|\tau_{s,h} V_\kappa(\D u_k(x))|}{|h|^{\frac{\alpha}{2}}} \dd{x}\notag 
    & =\int \limits_\Omega \rho \frac{|\tau_{s,h} V_\kappa(\D u_k(x))|}{|h|^{\frac{\alpha}{2}}} M_{h,s}^{-\frac{\mu + 2(1-\kappa)}{4}} M_{h,s}^{\frac{\mu + 2(1-\kappa)}{4}} \dd{x}\notag \\
    \le &\frac 12 \int \limits_\Omega \rho^2 \frac{|\tau_{s,h}V_\kappa(\D u_k)|^2}{|h|^{\alpha}}\frac{1}{M_{h,s}(x)^{\frac{\mu + 2(1-\kappa)}{2}}} \dd{x} + \frac 12 \int \limits_\Omega M_{h,s}(x)^{\frac{\mu + 2(1-\kappa)}{2}} \dd{x}.
\end{align}
The first term in the last line is bounded by \eqref{3.31.1}, and the second term is uniformly bounded if $\mu + 2(1 - \kappa) \le 1$ because the sequence $(u_k)_k$ is uniformly bounded in $\sobo^{1,1}(\Omega; \R^N)$. This condition translates to $\frac{\mu + 1}{2} \le \kappa$. Note that all this is valid under the constraint $1 < \kappa < 2$, where the lower bound is automatically satisfied due to $\mu > 1$, and the upper bound requires $\mu < 3$.
Summarizing, we have:
\[
\sup_{k \in \N} \int \limits_\Omega \rho \frac{|\tau_{s,h} V_\kappa(\D u_k(x))|}{|h|^{\frac{\alpha}{2}}} \, \mathrm{d}x < \infty,
\]
which, due to the arbitrariness of $\rho$ and the direction $s$, implies that the sequence $(V_\kappa(\D u_k))_k$ is uniformly bounded in the Besov space $\displaystyle \B^{\alpha/2}_{1,\infty}(K; \R^N)$ for any relatively compact Lipschitz subset $K \Subset \Omega$.
According to function space theory (see, for example,  \cite[Theorem 7.34]{adams_sobolev_2003} in conjunction with embeddings of weak $\lebe^p$-spaces), this implies that $(V_\kappa(\D u_k))_k$ is uniformly bounded in $\lebe^{\frac{2n}{2n - \alpha} - \delta'}(K; \R^N)$ for any $0 < \delta' < \frac{2n}{2n - \alpha}$.
Furthermore, by \cite[Lemma 2.4]{GmeinederKristensen19BD} with $p$ being $\frac{2n}{2n- \alpha} - \delta'$ in our case, we conclude that for any relatively compact Lipschitz set $K \Subset \Omega$, there exists a constant $C(\alpha, \delta', \kappa,K)$ such that
\[
\sup_{k \in \N} \int \limits_K |\D u_k|^{(2 - \kappa) \left( \frac{2n}{2n - \alpha} - \delta' \right)} \, \mathrm{d}x = C(\alpha, \delta', \kappa,K) < \infty.
\]
We now choose $\kappa$ and $\delta'$ in a suitable way. Notice that the condition
\[
(2 - \kappa) \frac{2n}{2n - \alpha} > 1 \Leftrightarrow \kappa < 1 + \frac{\alpha}{2n}
\]
should be satisfied to achieve higher integrability. Considering the lower bound for $\kappa$ derived earlier, we arrive at the condition
\[
\frac{\mu + 1}{2} < 1 + \frac{ \alpha}{2n} \Leftrightarrow \mu < 1 + \frac{\alpha}{n}.
\]
 Therefore, for $\mu < 1 + \frac{\alpha}{n}$, we can choose a $\kappa$ that satisfies both bounds. Having chosen such a $\kappa$, we then select $\delta' > 0$ small enough so that
\[
(2 - \kappa) \left( \frac{2n}{2n - \alpha} - \delta' \right)>1.
\]
This way we have deduced the uniform bound
\begin{align}\label{Lpuniform}
\sup_{k \in \N} \int \limits_K |\D u_k|^p \, \mathrm{d}x < \infty
\end{align}
on any relatively compact Lipschitz set $K \Subset \Omega$ for the exponent 
\[
p := (2 - \kappa) \left( \frac{2n}{2n - \alpha} - \delta' \right) > 1.
\]
Since we can let $\delta' \to 0$ and optimize $\kappa$ within the given bounds, it follows that the sequence $(\D u_k)_k$ is bounded in $\lebe^q(K;\R^{N \times n})$ for every $q < \frac{(3 - \mu)n}{2n - \alpha}$.

By the weak$^*$ convergence of $\D u_k$ to $\D u$, we may now apply the lower semicontinuity part of Reshetnyak's theorem (Theorem \ref{thm:Reshetnyak}) for the convex function $g(z) = |z|^q$ as explained in \eqref{eq:ReshetnyakApplication} to obtain:
$$\int \limits_K |\nabla u|^q \dd{x} + \int \limits_K g^\infty\left(\frac{\dd{\D^su}}{\dd{|\D^s u|}}\right) \dd{|\D^su|} \le \liminf_{k \to \infty} \int \limits_K |\D u_k|^q \dd{x} < \infty.$$
Since $q>1$ we have $g^\infty(z) \equiv + \infty$, and this implies that $\D^s u$ vanishes in $K$. By arbitrariness of $K \Subset \Omega$ we even have $\D^su=0$ in $\Omega$ and $\nabla u = \D u$. Hence 
$$\int \limits_K |\D u|^q \dd{x} <\infty,$$
which implies $\D u \in \lebe^q_{\mathrm{loc}}(\Omega; \R^{N \times n})$ for all $q < \frac{(3-\mu)n}{2n-\alpha}$. By Poincaré's inequality we infer $u \in \sobo^{1,q}_{\mathrm{loc}}(\Omega; \R^N)$ for the same range of $q$. The proof is complete.  \qed\\

Alternatively, in the very last step we could argue that for $n \ge 2$ we have $\bv(\Omega; \R^N) \hookrightarrow \lebe^{\frac{n}{n-1}}(\Omega; \R^N)$, and note that $\frac{(3-\mu)n}{2n-\alpha} < \frac{n}{n-1}$ for all admissible $\mu$.

\section{Lack of regularity in one dimension for $\mu>1+\alpha$}
\label{sharpness}

In this section, we provide a one-dimensional example, demonstrating that the range for the ellipticity parameter $\mu$ in Theorem \ref{maintheorem} is essentially sharp, reflecting the lack of regularity for higher ellipticity parameters in this setting. To that end, we show that for $n=N=1$, whenever $ \mu > \alpha + 1$, there is a generalized minimizer to a linear growth functional of the form considered in Theorem \ref{maintheorem}, which is of class $\bv\backslash \sobo^{1,1}_{\mathrm{loc}}(I, \R)$. Our construction is strongly inspired by previous and by now classical examples involving slightly different functionals, see \cite[p. 132]{Kaiser75}, \cite[Example 3.1]{GiaquintaModicaSoucek79-2} and \cite[Example 5.2]{HakkarainenKinnunenLahti}.

For this, we consider weighted integrands of the form
\begin{equation}\label{1Dproblem}
\mathscr F[u; I]: = \int \limits_a^b f(u'(x)) w(x) \dd x
\end{equation}
on the interval $I= (a,b)$ for $f \in \hold^2(\R)$ strictly convex and of linear growth, and for $w \in \hold([a,b])$ bounded below by some $m>0$. We seek to minimize the functional over Dirichlet classes
$$\mathscr D:= \{u \in \sobo^{1,1}(a,b): u(a)= y_1, u(b)=y_2\},$$
where due to continuity of Sobolev functions in one dimension the Dirichlet boundary condition is to be understood in the sense of a pointwise evaluation at the boundary points. 
\begin{beisp}
\label{counterexample}
Let $\mu \in (1,2), \alpha \in (0,1)$ with $\mu > \alpha +1$. Consider the integrand $F: [-1,1] \times \mathbb{R} \to \mathbb{R}$ given by $F(x,z) = w(x) f(z)$, where \[
w(x) = 1 + |x|^\alpha 
\]
and \[
f(z) = \begin{cases}
    \frac{\mu-1}{2\mu} z^2, & |z| \leq 1 \\
    |z| - \frac{1}{\mu(2-\mu)}|z|^{2 - \mu} + \frac{\mu-1}{2(2-\mu)}, & |z| > 1.
\end{cases}
\]
Then there exists $\tilde{M} > 0$ such that for any $M > \tilde{M}$, the generalized minimizer for the functional \eqref{1Dproblem} with boundary conditions $u(-1) = 0$ and $u(1) = M$ is not in $\sobo^{1,1}_{\mathrm{loc}}(-1,1)$ and exhibits a jump discontinuity at $x=0$.
\end{beisp}

This shows that the range $\mu < 1+\alpha$ of Theorem \ref{maintheorem} is essentially sharp, since the integrand satisfies all hypotheses of the Theorem except the range for $\mu$.\\

To prove the example we give a more abstract result on how to construct minimizers with jumps in one dimension.

\begin{prop}
\label{prop:counter}
  Let $f$ additionally satisfy the following conditions:
  \begin{enumerate}
    \item $\min f(\R) = 0 = f(0)$.
    \item $\lim_{t\to + \infty} f(t)/t = 1$ and $\lim_{t \to -\infty} f(t)/t = -1$, or equivalently $f'(\R) = (-1,1)$.
    \item $y_1=0, y_2=M>0$.
\end{enumerate}
    Assume in addition that $w$ attains its minimum $m$ at exactly one point $c \in (a,b)$. Defining $g:= (f')^{-1}: (-1,1) \to \R$, let
    $$M_0 := \int \limits_a^b g\left(\frac{m}{w(t)}\right) \dd t,$$
    and assume that $M_0$ is finite. If $M>M_0$, then any generalized minimizer has a jump at $c$ of size at least $M-M_0$.
\end{prop}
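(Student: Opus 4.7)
The plan is to exhibit an explicit admissible competitor $v$ with a single positive jump of size $M-M_0$ at $c$, to verify by a Fenchel-type duality argument that $v$ attains the minimum of the relaxed functional, and finally to read off from the saturation of the duality bound that every other minimizer must carry the same jump at $c$.

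Since $f\in\hold^2(\R)$ is strictly convex with $f'(\R)=(-1,1)$, its recession function is $f^\infty(z)=|z|$, so the integral representation \eqref{eq:integralrepresentation} yields the relaxed functional on $\bv(a,b)$ in the form
\[
\overline{\mathscr F}[u] = \int_a^b f(u')\,w\dd{x} + \int_{(a,b)} w\dd{|\D^s u|} + w(a)|u(a^+)| + w(b)|M-u(b^-)|.
\]
The competitor is $v\in\bv(a,b)$ with $v(a^+)=0$, $v'(x)=g(m/w(x))$ a.e., and a single jump of size $M-M_0$ at $c$; then $v(b^-)=M_0+(M-M_0)=M$ and a direct computation gives $\overline{\mathscr F}[v]=\int_a^b wf(g(m/w))\dd{x}+m(M-M_0)$. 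For the matching lower bound, for any $C\in(0,m)$ I would combine the tangent-line inequality for $f$ at $g(C/w(x))$ (where $f'=C/w(x)$) with the elementary bounds $w(x)|[u]_x|\ge C[u]_x$ on jumps and $w(a)|u(a^+)|\ge Cu(a^+)$, $w(b)|M-u(b^-)|\ge C(M-u(b^-))$ on the boundary. The $\bv$ identity
\[
u(a^+)+\int_a^b u'\dd{x}+\int_{(a,b)}\dd{\D^s u}+(M-u(b^-))=M
\]
collapses everything to
\[
\overline{\mathscr F}[u]\ge \int_a^b wf(g(C/w))\dd{x} - C\int_a^b g(C/w)\dd{x} + CM.
\]
Passing $C\nearrow m$ by dominated convergence — using $0\le g(C/w)\le g(m/w)\in \lebe^1(a,b)$ by hypothesis, and $|f(g(C/w))|\lesssim g(m/w)+1$ from the linear growth of $f$ — yields $\overline{\mathscr F}[u]\ge\overline{\mathscr F}[v]$, so $v$ is indeed a minimizer.

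To finish, for an arbitrary minimizer $u$ I would introduce the three non-negative residuals
\begin{align*}
R_{AC}(C)&:=\int_a^b w\bigl[f(u')-f(g(C/w))-(C/w)(u'-g(C/w))\bigr]\dd{x}, \\
R_J(C)&:=\int_{(a,b)}w\dd{|\D^s u|}-C\int_{(a,b)}\dd{\D^s u}, \\
R_B(C)&:=w(a)|u(a^+)|-Cu(a^+)+w(b)|M-u(b^-)|-C(M-u(b^-)),
\end{align*}
whose sum equals exactly the gap between $\overline{\mathscr F}[u]$ and the duality lower bound above and therefore tends to $0$ as $C\nearrow m$, forcing each of the three limits to vanish. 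Strict convexity of $f$ then gives $u'=g(m/w)$ a.e.; the condition $w\equiv m$ required on the support of $|\D^s u|$ forces $\D^s u$ to be a non-negative atom concentrated at the single point $c$ (ruling out any Cantor part and any jumps elsewhere); and $w(a),w(b)>m$ force the traces $u(a^+)=0$ and $u(b^-)=M$. Integrating then yields $M=M_0+[u]_c$, so $[u]_c=M-M_0>0$; in particular the jump at $c$ is (exactly, hence at least) $M-M_0$ and $u\notin\sobo^{1,1}_{\mathrm{loc}}(a,b)$. The main technical point is the passage $C\nearrow m$, where $g(m/w(x))\to+\infty$ at $x=c$; this is precisely why the duality is run at subcritical $C<m$ and the limit is taken only after all four contributions to the energy have been reassembled.
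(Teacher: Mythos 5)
Your proposal is correct, but it takes a genuinely different route from the paper's proof. The paper proceeds by successive competitor modifications applied to an arbitrary minimizer $u$: a shifted competitor shows the boundary values are attained, an atom-replacement competitor shows $\D^s u$ must concentrate at $c$, and the problem is then reduced to maps of the form $v_0 + k\chi_{[c,b]}$ with $v_0\in\sobo^{1,1}$, for which the Euler--Lagrange equation $f'(v_0')w\mp m=C$ is solved explicitly and monotonicity in the constant $C$ yields $v_0(b)\le M_0$, hence $k\ge M-M_0$. You instead run a calibration/Fenchel-type argument at subcritical slope: for $C<m$ the weighted tangent-line inequality for $f$ at $g(C/w)$, the trivial bounds on the singular and boundary terms (using $w\ge m>C$), and the $\bv$ fundamental-theorem identity give a lower bound on $\overline{\mathscr F}[u]$ whose limit as $C\nearrow m$ (dominated convergence, legitimate since $M_0<\infty$ under the standing hypothesis $M>M_0$ and $f$ has linear growth) equals the energy of your explicit competitor $v$; saturation of the bound then forces, via Fatou and strict convexity, $\nabla u=g(m/w)$ a.e., $\D^s u=(M-M_0)\delta_c$ (a measure supported at the single point $c$ can only be a non-negative atom, which rules out Cantor parts and other jumps), and attainment of the traces since $w(a),w(b)>m$. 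This buys you strictly more than the paper's statement: the jump size is exactly $M-M_0$, the minimizer is unique, and you bypass both the separate competitor constructions and the discussion of extremals of the reduced one-dimensional functional; the only delicate point, the blow-up of $g(m/w)$ at $c$, is handled correctly by keeping $C<m$ and passing to the limit only after reassembling all contributions to the energy.
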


\begin{proof}
    {If $M>M_0$, let $u \in \bv(a,b)$ be a generalized minimizer of \eqref{1Dproblem}, which exists by the standard direct method. Specifically, $u$ is a minimizer of the relaxed functional, where $\tilde u$  denotes the prescribed boundary values.} 
    \begin{align*}
        \overline{\mathscr F}[u]:= \int \limits_a^b f(\nabla u(x)) w(x) \dd x + \int \limits_a^b f^\infty \left( \frac{\dd \D^su}{ \dd \left|\D^su\right|} \right) w \ \dd \left|\D^su\right| + \int_{\partial (a,b)} f^\infty(u-\tilde u) w \ \dd \mathscr H^0.
    \end{align*}

    {The recession function in this context is given by:}
    $$f^\infty(z) = \limsup_{t \to \infty} \frac{f(tz)}{t} = \limsup_{t \to \infty} z \frac{f(tz)}{tz} = |z|$$
    {where the final equality follows from the second assumption on $f$ in the Proposition \ref{prop:counter}.} 
    Therefore we have
    $$\overline{\mathscr F}[u]= \int \limits_a^b f(\nabla u(x)) w(x) \dd x + \int \limits_a^b w \dd |\D^su| + w(a) |u(a)| + w(b)|u(b)-M|.$$
    
    We now show that $u$ attains its boundary values. For this purpose, consider the competitor
    $$v= \begin{cases}
        u-u(a), &\text{on } [a,c)\\
        u - u(b) + M, &\text{on } [c,b],\\
    \end{cases}$$
    for which we obtain by minimality of $u$
    \begin{align*}
        \overline{ \mathscr F}[u] \le \overline{\mathscr F}[v] = \int \limits_a^b f(\nabla u(x)) w(x) \dd x + \int \limits_{(a,b)\backslash \{c\}} w \dd |\D^su| + w(c) |u(c+)-u(b)+M -u(c-)+u(a)|\\
        \le \int \limits_a^b f(\nabla u(x)) w(x) \dd x + \int \limits_a^b w \dd |\D^su| + w(a)|u(a)| + w(b)|u(b)-M| =\overline{\mathscr F}[u],
    \end{align*}
    where we used the triangle inequality in the last line as well as $0<w(c)< w(a)$ and $0<w(c)< w(b)$. The second inequality therefore has to be an equality which can only hold if
    $$(w(a)-w(c))|u(a)| = (w(b) -w(c))|u(b)-J|=0,$$
    implying $u(b)=M$ and $u(a)=0$ by $w^{-1}(\{m\})=c$.\\

    Next we show that $\D^su$ must concentrate at $c$. 
    {Suppose not; then let}  $v \in \bv(a,b)$ be such that $\D v = \nabla u \mathscr L^1 + \left(\int \limits_a^b \dd \D^su\right) \delta_c$ with $v(a) = 0$. Such a function exists by integrating $\nabla u$ and adding a jump of appropriate size at $c$. We then have by construction $v(b)=u(b)=M$ and hence $v$ is admissible. We calculate
    $$\overline{\mathscr F}[v] = \int \limits_a^b f(\nabla u) w \dd x +w(c) \left | \int \limits_a^b \dd \D^su\right | \le \int \limits_a^b f(\nabla u)w \dd x+ \int \limits_a^b w \dd |\D^su| = \overline{\mathscr F}[u],$$
    where we used the definition of $c$ as the unique minimum point for $w$ and the triangle inequality for the total variation measure. By minimality of $u$ we have equality which is only attained (again by definition of $c$) if $\mathrm{supp}(\D^su) \subset w^{-1}(m) = \{c\}$. This means that it suffices to minimize over maps $v \in \bv(a,b)$ such that
    $$v= v_0 + k\chi_{[c,b]}$$
    with $k \in \R$, $v_0 \in \sobo^{1,1}$ with $v_0(a)=0$. This is due to the fact that the Cantor part of the derivative cannot concentrate in one single point. Therefore, only a jump can occur. In this case, the jump height $k$ is determined by $M= \int \limits_a^b v_0'(t) \dd t + k$, so that
    $$\overline{\mathscr F}[v] = \int \limits_a^b f(v_0'(t))w(t) \dd t+ w(c)|k|$$
We claim that $k\ge M-M_0>0$. To that end, depending on whether $k>0$ or $k \le 0$, we have
$$\overline{\mathscr F}[v] = \underbrace{\int \limits_a^b f(v_0'(t))w(t) \mp m v_0'(t) \dd t}_{\mathscr J[v_0]} \pm mM.$$
It therefore suffices to find an extremal of $\mathscr J$ in $\{v_0 \in \sobo^{1,1}(a,b): v_0(a)=0\}$. The Euler-Lagrange equation is $\left(f'(v_0'(t))w(t) \mp m\right)'=0$, hence
$$f'(v_0'(t)) w(t) \mp m=C$$
for some constant $C$ with $C \in (-2m,0)$ if $k>0$ and $C \in (0,2m)$ if $k \le 0$.
This range for $C$ can be checked by plugging in $t=c$ and using $f'(\R) = (-1,1)$. We can solve this to obtain
$$v_0(x) = \int \limits_a^x g\left(\frac{C\pm m}{w(t)}\right)\dd t.$$
Since for fixed $x$, the right hand side is increasing in $C$, we obtain
$$v_0(b)\le \int \limits_a^bg\left(\frac{m}{w(t)}\right)\dd t = M_0.$$
This implies that $k=M- \int \limits_a^b v_0'(t) \dd t \ge M-M_0 >0$.\\
We have hence shown that a jump of size $k\ge M-M_0>0$ at $c= w^{-1}({m})$ occurs, if the right hand side boundary value $M$ is larger than $M_0$.    

\end{proof}

\begin{figure}[t]
    \centering
    \includegraphics[scale=0.8]{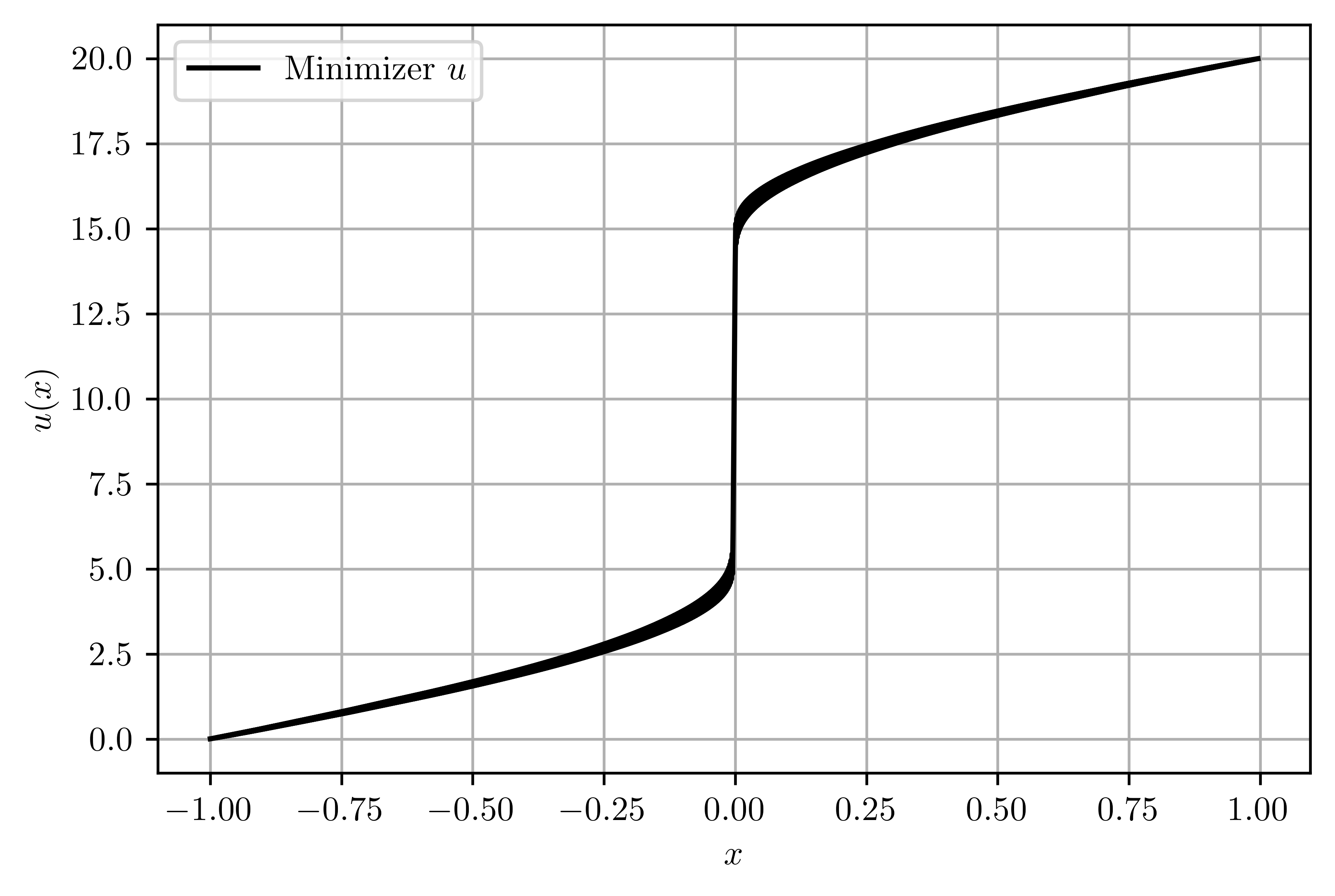}
    \caption{Numerical simulation of Example \ref{counterexample} with parameters $\mu = 1.4$, $\alpha = 0.25$ and $M=20$.}
    \label{fig:enter-label}
\end{figure}

\begin{proof}[Proof of Example \ref{counterexample}]
We have \[
f'(z) = \begin{cases}
    \frac{\mu-1}{\mu} z & |z|\leq1 \\
    \frac{z}{|z|} - \frac{z}{\mu|z|^\mu} & |z| > 1
\end{cases}
\]
and \[
g(z) = (f')^{-1}(z) = \begin{cases}
    ´\frac{\mu}{\mu-1}z & |z| \leq \frac{\mu-1}{\mu} \\
    -(\mu(z+1))^{\frac{1}{1-\mu}} & -1 < z < \frac{1-\mu}{\mu} \\
    (\mu(1-z))^\frac{1}{1-\mu} & 1 > z > \frac{\mu-1}{\mu}.
\end{cases}
\]
Since we have $m:=\min_{[-1,1]} w = 1$ and $\frac{\mu-1}{\mu}<\frac{1}{2}$, we obtain taking advantage of $\frac 12 < \frac{1}{1+|x|^\alpha} \le 1$ on $(-1,1)$, that \begin{align*}
M_0 & = \int \limits_{-1}^1 \left(\mu\left(1-\frac{1}{1+|z|^\alpha}\right)\right)^\frac{1}{1-\mu} \dd{z} \\
& = 2 \mu^{\frac{1}{1-\mu}} \int \limits_0^1 \left(\frac{t^\alpha}{1+t^\alpha}\right)^\frac{1}{1-\mu} \dd{t}.
\end{align*}
This integral is finite if and only if \[
\frac{\alpha}{1-\mu} > -1,
\]
which is equivalent to $\mu>\alpha +1 $.
We also note that \[
f''(z) = \begin{cases}
    \frac{\mu-1}{\mu} & |z| \leq 1 \\
    \frac{\mu-1}{\mu}|z|^{-\mu} & |z| > 1,
\end{cases}
\]
so $f \in \mathrm{C}^2(\mathbb{R})$ is strictly convex and $\mu$-elliptic. 

{Furthermore, $f'$ is bounded, implying that $f$ is of linear growth. The weight $w$, on the other hand, is continuous and bounded from below by $m = 1$. The function $f$ achieves its global minimum at $z=0$, and satisfies $\lim \limits_{t \to \infty} f'(t) = 1$ and $\lim \limits_{t \to -\infty} f'(t) = -1$. Consequently, Proposition \ref{prop:counter} is applicable. The preceding calculations demonstrate that for $\mu >\alpha +1$, there exists a $\bv$-minimizer that does not belong to $\sobo^{1,1}_{\mathrm{loc}}$, since the necessary integral $M_0$ is finite, thereby confirming the existence of $\tilde M$ as claimed.}

\end{proof}
Note that all assumptions of Theorem \ref{maintheorem} are fulfilled by $F$ from the example. The linear growth and $\mu$-ellipticity are immediate from the calculations, while the boundedness of $f'(z)$ implies the Hölder continuity condition \ref{cond:3}, since $w(x)$ is obviously $\alpha$-Hölder continuous.

\section{Dimension bound - Proof of Theorem \ref{maintheorem2}}\label{sec:proofDimension}
The estimates achieved in the proof of Theorem \ref{maintheorem} lead to higher differentiability, which can be seen by the minimizers $u$ belonging to fractional Sobolev spaces. By the measure density lemma we can transform this fractional differentiability into a dimension bound for the set of non-Lebesgue points of the derivative $\D u$ of a generalized minimizer $u$. In view of the partial regularity results (see the representation \eqref{eq:singularset}) this is equivalent to a dimension bound for the singular set.
\begin{proof}[Proof of Theorem \ref{maintheorem2}]
We go back to the uniform estimate \eqref{3.31.1} established in the proof of Theorem \ref{maintheorem} for the larger regime $1 < \mu < 1+ \frac{\alpha}{n}$. Similarly as in \eqref{eq:YoungNikolskii} above, we use Young's inequality to estimate on a relatively compact Lipschitz subset $K \Subset \Omega$: 
\begin{align}\label{eq:YoungNikolskii2}
    \int \limits_K \frac{|\tau_{s,h}V_\kappa(\D u_k(x))|}{|h|^{\frac{\alpha}{2}}M_{h,s}(x)^{\frac{1-\kappa}{2}}} \dd{x} 
    & =\int \limits_K  \frac{|\tau_{s,h}V_\kappa(\D u_k(x))|}{|h|^{\frac{\alpha}{2}}M_{h,s}(x)^{\frac{1-\kappa}{2}}} M_{h,s}^{-\mu/4}(x) M_{h,s}^{\mu/4}(x)\dd{x}\notag \\
    & \le \frac 12 \int \limits_K \frac{|\tau_{s,h}V_\kappa(\D u_k(x))|^2}{|h|^{ \alpha}}\frac{1}{M_{h,s}(x)^{\frac{\mu + 2(1-\kappa)}{2}}} \dd{x} + \frac 12 \int \limits_K M_{h,s}(x)^{\mu/2} \dd{x}.
\end{align}
Here, the first term is uniformly bounded by \eqref{3.31.1}, and the second term is uniformly bounded if and only if $\mu < \frac{(3-\mu) n}{2n-\alpha}$, as seen in Theorem \ref{maintheorem}. This is equivalent to $\mu < \frac{3n}{3n-\alpha}.$

For this range of $\mu$, according to \eqref{eq:comparison}, inequality \eqref{eq:YoungNikolskii2} is equivalent to
$$\sup_{k \in \N} \int \limits_K  \frac{|\tau_{s,h}\D u_k(x)|}{|h|^{\frac{\alpha}{2}}}<\infty$$
which implies that the sequence $(\D u_k)_k$ is uniformly bounded in the Nikolskiǐ space $\mathcal{N}^{\frac{\alpha}{2}, 1}(K; \R^N)$. By definition, this space coincides with the Besov space $\B_{1, \infty}^{\alpha/2}(K;\R^N)$, which, by standard interpolation theory, is continuously embedded into $\B_{1, 1}^\theta(K;\R^N)$ for all $\theta < \frac{\alpha}{2}$ (see, for example, \cite[2.3.2, Prop. 2]{Triebel}). $\B_{1, 1}^\theta(K;\R^N)$, in turn, is isomorphic to $\sobo^{\theta,1}(K;\R^N)$
, implying that $(\D u_k)$ is uniformly bounded in the fractional Sobolev spaces $\sobo^{\theta,1}(K;\R^N)$ for all $\theta < \frac{\alpha}{2}$.
\\

By the compact embedding $\sobo^{\theta,1}(K; \R^N) \hookrightarrow\hookrightarrow\lebe^1(K; \R^N)$ we infer the existence of a subsequence of $(\D u_k)_k$ converging strongly in $\lebe^1(K; \R^N)$ and pointwise a.e. to some limit $v$. By the weak$^*$-convergence of $(\D u_k)_k$ to $\D u$ we obtain $\D u =v$ a.e. in $K$, and hence by Fatou's lemma
 \begin{align*}
    \int \limits_K \frac{|\tau_{s,h}\D u|}{|h|^{\frac{\alpha}{2}}}\dd{x} \le \liminf_{k \to \infty} \int \limits_K \frac{|\tau_{s,h}\D u_k|}{|h|^{\frac{\alpha}{2}}}\dd{x} \le \sup_{k \in \N} \sup_{0< |h| <\min\{1, \tilde \delta\}}\int \limits_K \frac{|\tau_{s,h}\D u_k|}{|h|^{\frac{\alpha}{2}}}\dd{x} <\infty,
 \end{align*}
 where $\tilde{\delta}$ is chosen such that $x+\tilde{\delta}e_s \in \Omega$ for all $x \in K$, $s \in \{1, \dots, n\}$.
 Taking the supremum over all $h$ in a suitable range and all $s \in \{1, \dots, n\}$ on the left hand side yields $\D u \in \sobo^{\theta,1}(K;\R^N)$ for all $\theta < \frac{\alpha}{2}$ following the same reasoning as for $\D u_k$.  
 
Since we can exhaust $\Omega$ by countably many relatively compact subsets $K$, we therefore have by Proposition \ref{thm:fracsobo-hausdorff}
$$\dim_{\mathscr H}(\mathrm{Sing}(u)) \leq n- \frac{\alpha}{2} .$$
This concludes the proof.
\end{proof}

As noted before, the Sobolev regularity result holds in the broader range $1 <\mu < 1+ \frac{ \alpha}{n}$. However, to achieve the necessary smoothness required for dimension reduction, the ellipticity parameter $\mu$ must be selected based on the precise value of the maximal integrability. At this stage in the proof of Theorem \ref{maintheorem2}, the more restrictive condition $1< \mu <\frac{3n}{3n-\alpha}$ becomes essential.\\

As alluded to in the introduction, a better dimension bound is available in the case of autonomous $\mu$-elliptic linear growth integrands. Since it is hard to find a proof in the literature, we provide a sketch here. The underlying ideas are, in principle, not novel and can be partially found in the literature; see, for instance, \textsc{Beck \& Schmidt} \cite{BeckSchmidt13} and \textsc{Gmeineder} \cite{Gmeineder16,Gmeineder20BD}.
\begin{satz}[The autonomous case]\label{thm:autonomous}
    Let $F: \R^{N \times n} \to \R$ be a linear growth integrand which is $\mu$-elliptic for $1< \mu \le \frac{n}{n-1}$ if $n\ge 3$ or for $1<\mu <2$ if $n=2$. Then every generalized minimizer $u \in \bv(\Omega; \R^N)$ to the variational principle
    $$\mathscr F[u; \Omega] = \int \limits_\Omega F(\D u) \dd{x}, \quad u \in \sobo^{1,1}_{u_0}(\Omega; \R^N)$$
    satisfies
    $$\dim_{\mathscr H}(\operatorname{Sing} u) \le n-1.$$
\end{satz}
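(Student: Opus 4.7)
The plan is to adapt the vanishing viscosity scheme of Section \ref{sec:EkelandConstruction} to the autonomous setting, exploiting the stronger a priori estimates that become available when $F$ has no $x$-dependence. Starting from an arbitrary $u \in \operatorname{GM}(\mathscr F; u_0)$, one constructs an Ekeland approximating sequence $(u_k)$ satisfying the approximate Euler-Lagrange inequality \eqref{eq:3.14} for the autonomous stabilized integrand $F_k(z) := F(z) + \tfrac{1}{2k^2 A_k}(1+|z|^2)$, a uniform $\sobo^{1,1}$-bound, and $u_k \wstar u$ in $\bv$.

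Testing \eqref{eq:3.14} with $\varphi = \tau_{s,-h}(\rho^2 \tau_{s,h} u_k)$ leads to the same decomposition \eqref{sum_of_four} as in the non-autonomous case, with two essential simplifications: the Hölder term $\mathit{II}$ vanishes identically, and the term $\mathit{III}$ can be refined using the upper bound $|\D_z^2 F(z)| \lesssim (1+|z|^2)^{-1/2}$ from $\mu$-ellipticity in place of the mere Lipschitz continuity of $\D_z F$. With the auxiliary parameter $\kappa := 1 + \mu/2 \in (1,2)$ (admissible because $\mu < 2$) and the comparison estimate \eqref{eq:comparison}, this yields — possibly after a preliminary bootstrap to obtain a uniform $\lebe^{p_0}_{\mathrm{loc}}$-bound on $\D u_k$ for some $p_0 > 1$, supplied by the classical autonomous $\mu$-elliptic theory in the strictly larger range $1 < \mu < 1 + 2/n$ — a Caccioppoli inequality of the second kind:
$$\sup_{k \in \N} \int_K |\D V_\kappa(\D u_k)|^2 \dd x < \infty \quad \text{for every } K \Subset \Omega,$$
so that $V_\kappa(\D u_k)$ is uniformly bounded in $\sobo^{1,2}_{\mathrm{loc}}(\Omega; \R^{N\times n})$.

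Sobolev embedding then gives $V_\kappa(\D u_k) \in \lebe^{2^*}_{\mathrm{loc}}$ with $2^* = 2n/(n-2)$ for $n \ge 3$ (or every $p < \infty$ for $n=2$), and since $|V_\kappa(z)| \sim |z|^{1-\mu/2}$ for $|z| \to \infty$, we obtain a uniform $\lebe^{n(2-\mu)/(n-2)}_{\mathrm{loc}}$-bound for $\D u_k$. Applying \eqref{eq:comparison} in the reverse form $|\tau_{s,h}\D u_k| \lesssim M_{h,s}^{\mu/4}\,|\tau_{s,h} V_\kappa(\D u_k)|$ together with Cauchy-Schwarz yields
$$\|\tau_{s,h}\D u_k\|_{\lebe^1(K)} \lesssim \|(1+|\D u_k|)^{\mu/2}\|_{\lebe^2(K)} \cdot \|\tau_{s,h}V_\kappa(\D u_k)\|_{\lebe^2(K)} \lesssim |h|,$$
uniformly in $k$, provided $(\D u_k)$ is uniformly bounded in $\lebe^\mu_{\mathrm{loc}}$. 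A direct computation shows that the previous integrability supplies this precisely when $\mu \le n(2-\mu)/(n-2)$, equivalently $\mu(n-1) \le n$, i.e.\ $\mu \le n/(n-1)$ (respectively $\mu < 2$ if $n=2$), which is exactly the hypothesized range. Consequently $(\D u_k)$ is uniformly bounded in the Nikolskiǐ space $\mathcal N^{1,1}_{\mathrm{loc}}$; by the embedding $\mathcal N^{1,1} \hookrightarrow \sobo^{s,1}$ for every $s < 1$ and weak$^*$-lower semicontinuity, the limit satisfies $\D u \in \sobo^{s,1}_{\mathrm{loc}}(\Omega; \R^{N\times n})$ for all $s < 1$. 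Proposition \ref{thm:fracsobo-hausdorff} with $p = 1$ and $s \nearrow 1$, combined with the characterization \eqref{eq:singularset} of $\operatorname{Sing} u$, then yields $\dim_{\mathscr H}(\operatorname{Sing} u) \le n - 1$.

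The main obstacle is the rigorous derivation of the Caccioppoli inequality of the second kind for $V_\kappa(\D u_k)$ at the level of the Ekeland approximants: after applying Young's inequality to $\mathit{III}$, a cross-term of the form $\int |\D\rho|^2 |\tau_{s,h} u_k|^2 \dd x$ appears, and controlling it by $O(h^2)$ uniformly in $k$ is not automatic from the uniform $\sobo^{1,1}$-bound alone. The preliminary $\lebe^{p_0}_{\mathrm{loc}}$-integrability of $\D u_k$ mentioned above, together with a careful analysis of the Ekeland remainder in the $\sobo^{-1,1}$-norm (cf.\ Section \ref{sec:negativeSobolevSpace}), is what ultimately closes the argument.
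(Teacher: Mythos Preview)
Your proposal is essentially correct and parallels the paper's own argument closely: both construct the Ekeland sequence $(u_k)$, both establish the key second-order Caccioppoli-type bound (your $\sup_k \int_K |\D V_\kappa(\D u_k)|^2 \dd x < \infty$ with $\kappa = 1+\mu/2$ is exactly the paper's $\sup_k \int_U |\D^2 u_k|^2 (1+|\D u_k|^2)^{-\mu/2} \dd x < \infty$, since $|\D V_\kappa(\D u_k)|^2 \sim (1+|\D u_k|^2)^{-\mu/2}|\D^2 u_k|^2$), and both close the $\lebe^\mu$-requirement via the Sobolev embedding for $V_\kappa(\D u_k)$, yielding precisely the threshold $\mu \le n/(n-1)$.

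Two points of divergence are worth noting. First, your final step passes through the Nikolski\u{\i} space $\mathcal N^{1,1}$ and then $\sobo^{s,1}$ for all $s<1$, invoking Proposition~\ref{thm:fracsobo-hausdorff} with $s\nearrow 1$; the paper instead obtains the slightly sharper $\D u \in \bv_{\mathrm{loc}}$ directly (from the uniform $\sobo^{1,1}$-bound on $\D u_k$) and applies the $\bv$-Poincar\'e inequality together with Lemma~\ref{lem:measuredensity}. Both routes give $\dim_{\mathscr H}(\operatorname{Sing} u)\le n-1$, but the paper's buys the extra information $\D u \in \bv_{\mathrm{loc}}$. Second, the ``main obstacle'' you flag --- controlling $\int |\D\rho|^2 |\tau_{s,h} u_k|^2 \dd x$ uniformly --- is genuine but admits a simpler resolution than a preliminary $\lebe^{p_0}$-bootstrap: if in term $\mathit{III}$ you apply Cauchy--Schwarz \emph{with respect to the bilinear form} $\D_z^2 F$ (rather than first passing to the scalar upper bound), the remainder becomes $\int |\D\rho|^2 \langle \D_z^2 F(\cdot)\, e_s\!\otimes\!\tau_{s,h} u_k,\, e_s\!\otimes\!\tau_{s,h} u_k\rangle \dd x \lesssim \int |\D\rho|^2 (1+|\D u_k|^2)^{-1/2}|\tau_{s,h} u_k|^2 \dd x$, and since $(1+|z|^2)^{-1/2}|z|^2 \le |z|$ this is controlled by $|h|\,\|\D u_k\|_{\lebe^1}$ alone. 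The paper delegates this step to \cite{BeckSchmidt13}, where the same trick is used.
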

Note that, as a consequence of Proposition \ref{prop:PartialRegularity}, every generalized minimizer is partially $\hold^{1,\alpha}$-regular.
\begin{proof}[Sketch of proof]
    As a first step, we need to establish that for the vanishing viscosity approximating sequence $(u_k)_k$, 
    {constructed with the help of Ekeland's variational principle, we have the following uniform bound}
    \begin{equation}\label{eq:autonomous1}
    \int \limits_{U} \frac{|\D^2 u_k|^2}{\left(1+|\D u_k|^2\right)^{\mu/2}} \dd{x} \le C(U, \mu)<\infty 
    \end{equation}
    for every Lipschitz subset $U \Subset \Omega$. This is a consequence of the approximate Euler-Lagrange inequality satisfied by the sequence $(u_k)_k$. The reasoning follows similarly to the proof of Theorem \ref{maintheorem}. For the autonomous case, the argument is essentially contained in \cite[Lemmas 5.1, 5.2]{BeckSchmidt13}, if one exchanges the ellipticity exponent $3$ used by the authors in \cite{BeckSchmidt13} by $\mu$ in our case and the integrability $p$ of $u_k$ by $2$. Having established this, we proceed as follows:    
    By Young's inequality, we have for any $U \Subset \Omega$
    \begin{equation}\label{eq:autonomous2}
    \int \limits_U |\D^2 u_k| \dd{x} \le C \int \limits_U \frac{|\D^2 u_k|^2}{\left(1+|\D u_k|^2\right)^{\mu/2}} \dd{x} + \int \limits_U (1+|\D u_k|^2)^{\mu/2} \dd{x}.
    \end{equation}
   In order to find out when the right-hand side is uniformly bounded in $k$, we employ the auxiliary function 
   $$\tilde V_\mu(\xi):= (1+|\xi|^2)^{\frac{2-\mu}{4}}.$$
A straightforward application of the chain rule yields the estimate
   $$\abs{\partial_{k} \tilde V_\mu(\D w)}^2\le C(\mu)(1+|\D w|^2)^{-\mu /2} \abs{\partial_k \D w}^2, \quad k = 1, \dots, n, $$
   from which the uniform bound recorded in \eqref{eq:autonomous1} implies the uniform boundedness of $\tilde V_\mu(\D u_k)$ in $\sobo^{1,2}_{\mathrm{loc}}(\Omega; \R^N)$. By the Sobolev embedding theorem, this further implies the uniform boundedness of $(\tilde V_\mu(\D u_k))$ in $\lebe^{\frac{2n}{n-2}}_{\mathrm{loc}}$ for $n \ge 3$ or in $\operatorname{BMO}_{\mathrm{loc}}$ for $n =2$. By the definition of the auxiliary function $\tilde V_\mu$, this leads to the bound:
   $$\int \limits_U |\D u_k|^{\frac{(2-\mu)n}{n-2}} \dd{x} \le C(U) < \infty$$
   for $n \ge 3$, where the exponent satisfies $\frac{(2-\mu)n}{n-2}>1$ due to the condition $\mu \le \frac{n}{n-1} < 1+ \frac 2n$.\\

   Referring back to \eqref{eq:autonomous2}, we {deduce} for $n \ge 3$ that the right-hand side is uniformly bounded if and only if
   $$\mu \le \frac{(2-\mu)n}{n-2} \Leftrightarrow \mu \le \frac{n}{n-1}.$$
   For $n = 2$, we utilize the embedding $\operatorname{BMO}_{\mathrm{loc}} \hookrightarrow \lebe^q_{\mathrm{loc}}$ for every $1 \le q < \infty$. {Consequently,} $\tilde V_\mu(\D u_k)$ is uniformly bounded in $\lebe^q_{\mathrm{loc}}$ for every $1 \le q <\infty$. The right-hand side of \eqref{eq:autonomous2} is therefore uniformly bounded if and only if
   $$\mu \le \frac{2-\mu}{2}q \Leftrightarrow \mu \le \frac{2q}{2+q}.$$
   By letting $q \nearrow \infty$ we obtain the range $1 < \mu <2$ for $n=2$. Independently of the dimension, we conclude as follows: From the uniform bound in \eqref{eq:autonomous2}, {it follows that} $(\D u_k)_k$ is uniformly bounded in $\sobo^{1,1}_{\mathrm{loc}}(\Omega; \R^N)$, and hence converges weakly$^*$ to some $v \in \bv_{\mathrm{loc}}(\Omega; \R^N)$. By the convergence properties of $(u_k)_k$ we have $v= \D u$ a.e. in $\Omega$, and $\D u \in \bv_{\mathrm{loc}}(\Omega; \R^N)$. {Subsequently, the measure density lemma, in conjunction with the Poincaré inequality for}  $\bv$,  namely
   $$\fint \limits_{\ball(x_0, r)} |\D u - (\D u)_{\ball(x_0, r)}| \dd{x} \le \frac{|\D^2 u|(\ball(x_0,r))}{\mathscr{L}^n(\ball(x_0,r))^{\frac{n-1}{n}}}$$
   {can be applied} in the following way: if a point belongs to the singular set $\operatorname{Sing} u$, taking the limit superior for $r \searrow 0$ in the {above} inequality, the left hand side stays positive. Then the measure density lemma (Lemma \ref{lem:measuredensity}) applied to the set function $O \mapsto |\D^2 u|(O)$ implies
   $$\dim_{\mathscr H}(\operatorname{Sing} u) \le n-1$$
   upon noting that $\mathscr L^n(\ball(x_0,r))^{\frac{n-1}{n}} \sim r^{n-1}.$
\end{proof}

\section{Possible extensions}\label{sec:extensions}
Let us briefly comment on possible extensions or improvements of Theorems \ref{maintheorem} and \ref{maintheorem2}. First, as mentioned in the introduction, an a priori $\lebe^\infty$-assumption usually allows to obtain Sobolev regularity results in a larger range of $\mu$. More specifically, it would be interesting to show Sobolev regularity for all bounded minimizers to $\mu$-elliptic non-autonomous linear growth functionals for $\mu \le 3$. This would match the ellipticity bounds for the autonomous case and it is the best range of $\mu$ one can hope fore. This is due to a counterexample by \textsc{Bildhauer} \cite[Chapter 4.4]{Bildhauer03book}, based on ideas by \textsc{Giaquinta, Modica \& Souček} \cite{GiaquintaModicaSoucek79-1,GiaquintaModicaSoucek79-2}, showing that for $\mu>3$, Sobolev regularity for all bounded generalized minimizers can not be achieved.\\

Concerning the dimension reduction, it would be interesting to investigate the gap in the dimension bound that our results show in comparison to the autonomus case. For $\alpha \nearrow 1$, we are able to arrive at $\left( n- \frac 12 \right)$ as a bound for the dimension of the singular set, but not at $(n-1)$ as in the autonomus case, see Theorem \ref{thm:autonomous}. It is presently not clear to us if this gap is natural, seeing as a gap in the admissible ellipticity ranges between Lipschitz continuous non-autonomous and autonomous integrands does actually occur (see the case of $(p,q)$-growth in Chapter \ref{sec:context} and \cite{EspositoLeonettiMingione04}). This leaves open interesting questions for future investigations - either to look into closing this gap or to find examples of integrands that confirm the existence of this jump.\\

Another possible generalization would be to handle fully non-autonomous integrals of linear growth. By that, we mean integrands $F: \Omega \times \R^N \times \R^{N \times n} \to \R$ with linear growth in the third variable. The corresponding functionals are of the form
$$\mathscr F[u; \Omega]:= \int \limits_\Omega F(x, u, \D u) \dd{x}, \quad u: \Omega \to \R^N.$$
Similar to the case of non-autonomous integrands as treated by us, one has to work with a suitably relaxed functional defined on $\bv(\Omega; \R^N)$. However, there is presently no regularity theory available for these functionals, neither Sobolev nor (partial) Hölder regularity are known. One particular problem in obtaining estimates like the ones derived in this article is the fact that the approximate Euler-Lagrange inequality takes a more complicated form. This is because the Euler-Lagrange equation for a generalized minimizer also contains a derivative with respect to the second variable. In view of this, there is little hope to obtain regularity results for fully non-autonomous linear growth functionals with the methods presented here. Nonetheless, it would be interesting to explore these functionals in terms of regularity.

\section*{Acknowledgements}

The authors thank Franz Gmeineder for suggesting the topic and fruitful discussions on the theme of the article. L. F. gratefully acknowledges financial support by the Hector foundation and the Studienstiftung des deutschen Volkes. The research of B. P. was partially supported by YSF offered by University of Konstanz under the project number: FP 638/23. P. S. acknowledges the financial support through a stipend of the Landesgraduiertenf\"orderungsgesetz Baden-W\"urttemberg. The authors are also grateful to the anonymous referee for a careful reading of the manuscript and for valuable comments.

\bibliographystyle{acm}
\bibliography{main.bib}

\begin{thebibliography}{10}

\bibitem{adams_sobolev_2003}
{\sc Adams, R.~A., and Fournier, J. J.~F.}
\newblock {\em Sobolev {Spaces}}.
\newblock Elsevier, June 2003.

\bibitem{AmbrosioDalMaso92}
{\sc Ambrosio, L., and Dal~Maso, G.}
\newblock {On the Relaxation in $\bv(\Omega, \R^m)$ of Quasi-convex Integrals}.
\newblock {\em J. Funct. Anal. 109\/} (1992), 76--97.

\bibitem{AmbrosioFuscoPallara}
{\sc Ambrosio, L., Fusco, N., and Pallara, D.}
\newblock {\em {Functions of Bounded Variation and Free Discontinuity Problems}}.
\newblock Oxford mathematical monographs. Clarendon Press, Oxford, 2000.

\bibitem{AnzellottiGiaquinta88}
{\sc Anzellotti, G., and Giaquinta, M.}
\newblock Convex functionals and partial regularity.
\newblock {\em Arch. Ration. Mech. Anal. 102}, 3 (1988), 243--272.

\bibitem{Beck}
{\sc Beck, L.}
\newblock {\em {Elliptic Regularity Theory : A First Course}}.
\newblock Lecture Notes of the Unione Matematica Italiana 19. Springer International Publishing, Cham, 2016.

\bibitem{BeckBulicekGmeineder20}
{\sc Beck, L., Bulíček, M., and Gmeineder, F.}
\newblock {On a Neumann problem for variational functionals of linear growth}.
\newblock {\em Annali della Scuola normale superiore di Pisa, Classe di scienze\/} (2020), 695--737.

\bibitem{BeEiFr24}
{\sc Beck, L., Eitler, F., and Gmeineder, F.}
\newblock {Gradient integrability for bounded BD-minimizers}, 2024.
\newblock ArXiv preprint. \href{https://arxiv.org/abs/2412.16131}{arXiv:2412.16131}.

\bibitem{BeckSchmidt13}
{\sc Beck, L., and Schmidt, T.}
\newblock {On the Dirichlet problem for variational integrals in BV}.
\newblock {\em Journal für die reine und angewandte Mathematik 2013}, 674 (2013), 113--194.

\bibitem{BeckSchmidt15InteriorGradient}
{\sc Beck, L., and Schmidt, T.}
\newblock {Interior gradient regularity for BV minimizers of singular variational problems}.
\newblock {\em Nonlinear Analysis: Theory, Methods \& Applications 120\/} (2015), 86--106.

\bibitem{Bildhauer02gradient_estimates}
{\sc Bildhauer, M.}
\newblock {A Priori Gradient Estimates for Bounded Generalized Solutions of a Class of Variational Problems with Linear Growth}.
\newblock {\em J. Convex Anal. 9\/} (2002), 117--137.

\bibitem{Bildhauer03book}
{\sc Bildhauer, M.}
\newblock {\em {Convex Variational Problems - Linear, Nearly Linear and Anisotropic Growth Conditions}}.
\newblock No.~1818 in Lecture Notes in Math. Springer Verlage, 2003.

\bibitem{Bildhauer03two-dimensional}
{\sc Bildhauer, M.}
\newblock Two dimensional variational problems with linear growth.
\newblock {\em Manuscripta Mathematica 110}, 3 (2003), 325--342.

\bibitem{BildhauerFuchs01}
{\sc Bildhauer, M., and Fuchs, M.}
\newblock Partial regularity for variational integrals with $(s,\mu ,q)$ -growth.
\newblock {\em Calc. Var. Partial Differential Equations 13}, 4 (2001), 537--560.

\bibitem{BildhauerFuchsMingione01}
{\sc Bildhauer, M., Fuchs, M., and Mingione, G.}
\newblock {A Priori Gradient Bounds and Local $\operatorname{C}^{1,\alpha}$-Estimates for (Double) Obstacle Problems under Non-Standard Growth Conditions}.
\newblock {\em Z. Anal. Anwend. 20\/} (2001), 959--985.

\bibitem{BärlinKessler2022}
{\sc Bärlin, M., and Keßler, K.}
\newblock {Partial Regularity for $\mathbb{A}$-quasiconvex Functionals}, 2022.
\newblock ArXiv preprint. \href{https://arxiv.org/abs/2203.00153}{arXiv:2203.00153}.

\bibitem{defilippis2024boundedminimizersdoublephase}
{\sc De~Filippis, C., De~Filippis, F., and Piccinini, M.}
\newblock {Bounded minimizers of double phase problems at nearly linear growth}, 2024.
\newblock ArXiv preprint. \href{https://arxiv.org/abs/2411.14325}{arXiv:2411.14325}.

\bibitem{DeFilippis1}
{\sc De~Filippis, C., and Mingione, G.}
\newblock Regularity for double phase problems at nearly linear growth.
\newblock {\em Arch. Ration. Mech. Anal. 247}, 5 (2023), Paper No. 85, 50.

\bibitem{De_Filippis_2024}
{\sc De~Filippis, F., and Piccinini, M.}
\newblock Regularity for multi-phase problems at nearly linear growth.
\newblock {\em Journal of Differential Equations 410\/} (Nov. 2024), 832–868.

\bibitem{Ekeland1974}
{\sc Ekeland, I.}
\newblock On the variational principle.
\newblock {\em Journal of Mathematical Analysis and Applications 47}, 2 (1974), 324--353.

\bibitem{EspositoLeonettiMingione99JDE}
{\sc Esposito, L., Leonetti, F., and Mingione, G.}
\newblock Higher integrability for minimizers of integral functionals with $(p,q)$ growth.
\newblock {\em Journal of Differential Equations 157}, 2 (1999), 414--438.

\bibitem{EspositoLeonettiMingione99NoDEA}
{\sc Esposito, L., Leonetti, F., and Mingione, G.}
\newblock Regularity for minimizers of functionals with {$p$}-{$q$} growth.
\newblock {\em Nonlinear Differential Equations and Applications NoDEA 6}, 2 (1999), 133--148.

\bibitem{EspositoLeonettiMingione02}
{\sc Esposito, L., Leonetti, F., and Mingione, G.}
\newblock Regularity results for minimizers of irregular integrals with (p,q) growth.
\newblock {\em Forum Mathematicum 14}, 2 (2002), 245--272.

\bibitem{EspositoLeonettiMingione04}
{\sc Esposito, L., Leonetti, F., and Mingione, G.}
\newblock Sharp regularity for functionals with $(p,q)$ growth.
\newblock {\em Journal of Differential Equations 204}, 1 (2004), 5--55.

\bibitem{evans_measure_1991}
{\sc Evans, L.~C., and Gariepy, R.~F.}
\newblock {\em Measure {Theory} and {Fine} {Properties} of {Functions}}.
\newblock CRC Press, Dec. 1991.

\bibitem{FonsecaMüller93}
{\sc Fonseca, I., and Müller, S.}
\newblock {Relaxation of Quasiconvex Functionals in $\bv(\Omega, \R^p)$ for Integrand $f(x,u, \nabla u)$}.
\newblock {\em Arch. Ration. Mech. Anal. 123\/} (1993), 1--49.

\bibitem{GiaquintaModica79}
{\sc Giaquinta, M., and Modica, G.}
\newblock {Almost-everywhere regularity for solutions of nonlinear elliptic systems}.
\newblock {\em Manuscripta Math. 28\/} (1979), 109--158.

\bibitem{GiaquintaModicaSoucek79-1}
{\sc Giaquinta, M., Modica, G., and Souček, J.}
\newblock {Functionals with linear growth in the calculus of variations. I}.
\newblock {\em Commentat. Math. Univ. Carol. 20\/} (1979), 143--156.

\bibitem{GiaquintaModicaSoucek79-2}
{\sc Giaquinta, M., Modica, G., and Souček, J.}
\newblock {Functionals with linear growth in the calculus of variations. II}.
\newblock {\em Commentat. Math. Univ. Carol. 20\/} (1979), 157--172.

\bibitem{Giusti}
{\sc Giusti, E.}
\newblock {\em {Direct methods in the calculus of variations}}.
\newblock World Scientific Pub. Co., Singapore, Reprint 2005.

\bibitem{Gmeineder16}
{\sc Gmeineder, F.}
\newblock {On the singular set of generalised minima in BV}.
\newblock Tech. Rep. OxPDE-16/03, Oxford Centre for Nonlinear PDE, 2016.

\bibitem{Gmeineder2017}
{\sc Gmeineder, F.}
\newblock {Symmetric-convex functionals of linear growth}.
\newblock {\em J. Ell. Par. Equations 2}, 1-2 (2016), 59--71.

\bibitem{Gmeineder20BD}
{\sc Gmeineder, F.}
\newblock {The Regularity of Minima for the Dirichlet Problem on BD}.
\newblock {\em Arch. Ration. Mech. Anal. 237}, 3 (2020), 1099--1171.

\bibitem{Gmeineder21symmquasiconvexBD}
{\sc Gmeineder, F.}
\newblock {Partial regularity for symmetric quasiconvex functionals on BD}.
\newblock {\em J. Math. Pures Appl. 145\/} (2021), 83--129.

\bibitem{GmeinederKristensen19BVPartialRegularity}
{\sc Gmeineder, F., and Kristensen, J.}
\newblock {Partial Regularity for BV Minimizers}.
\newblock {\em Arch. Ration. Mech. Anal. 232}, 3 (2019), 1429--1473.

\bibitem{GmeinederKristensen19BD}
{\sc Gmeineder, F., and Kristensen, J.}
\newblock {Sobolev regularity for convex functionals on BD}.
\newblock {\em Calc. Var. Partial Differential Equations 58}, 2 (2019), 1--29.

\bibitem{GmeinederKristensen2024pq}
{\sc Gmeineder, F., and Kristensen, J.}
\newblock Quasiconvex functionals of {$(p,q)$}-growth and the partial regularity of relaxed minimizers.
\newblock {\em Arch. Ration. Mech. Anal. 248}, 5 (2024), Paper No. 80, 125.

\bibitem{GoffmanSerrin64}
{\sc Goffman, C., and Serrin, J.}
\newblock {Sublinear functions of measures and variational integrals}.
\newblock {\em Duke Math J. 31\/} (1964), 159--178.

\bibitem{HakkarainenKinnunenLahti}
{\sc Hakkarainen, H., Kinnunen, J., and Lahti, P.}
\newblock Regularity of minimizers of the area functional in metric spaces.
\newblock {\em Advances in Calculus of Variations 8}, 1 (2015), 55--68.

\bibitem{Kaiser75}
{\sc Kaiser, P.}
\newblock {A problem of slow growth in the Calculus of Variations}.
\newblock {\em Atti Sem. Mat. Fis. Univ. Modena 24}, 2 (1975), 236--246.

\bibitem{KristensenMingione06}
{\sc Kristensen, J., and Mingione, G.}
\newblock {The Singular Set of Minima of Integral Functionals}.
\newblock {\em Arch. Ration. Mech. Anal. 180\/} (2006), 331--398.

\bibitem{KristensenMingione07}
{\sc Kristensen, J., and Mingione, G.}
\newblock {The Singular Set of Lipschitzian Minima of Multiple Integrals}.
\newblock {\em Arch. Ration. Mech. Anal. 184}, 2 (2007), 341--369.

\bibitem{KristensenRindler10}
{\sc Kristensen, J., and Rindler, F.}
\newblock {Relaxation of signed integral functionals on BV}.
\newblock {\em Calc. Var. 37\/} (2010), 29--62.

\bibitem{Li22}
{\sc Li, Z.}
\newblock Partial regularity for $\omega$-minimizers of quasiconvex functionals.
\newblock {\em Calc. Var. Partial Differential Equations 61}, 5 (2022).

\bibitem{Mingione03}
{\sc Mingione, G.}
\newblock The singular set of solutions to non-differentiable elliptic systems.
\newblock {\em Arch. Ration. Mech. Anal. 166}, 4 (2003), 287--301.

\bibitem{mingione_regularity_2006}
{\sc Mingione, G.}
\newblock Regularity of minima: {An} invitation to the dark side of the calculus of variations.
\newblock {\em Applications of Mathematics 51\/} (Apr. 2006), 355--426.

\bibitem{Mingione08wild_side}
{\sc Mingione, G.}
\newblock {Singularities of minima: a walk on the wild side of the Calculus of Variations}.
\newblock {\em J. Global Optim. 40}, 1-3 (2008), 209--223.

\bibitem{Reshetnyak68}
{\sc Reshetnyak, Y.~G.}
\newblock Weak convergence of completely additive vector functions on a set.
\newblock {\em Siberian Mathematical Journal 9}, 6 (1968).

\bibitem{Rindler}
{\sc Rindler, F.}
\newblock {\em {Calculus of Variations}}, 1st ed. 2018.~ed.
\newblock Universitext. Springer Nature, Cham, 2018.

\bibitem{Schmidt09}
{\sc Schmidt, T.}
\newblock {A Simple Partial Regularity Proof for Minimizers of Variational Integrals}.
\newblock {\em Nonlinear Differential Equations Appl. 16}, 1 (2009), 109--129.

\bibitem{Schmidt14}
{\sc Schmidt, T.}
\newblock {Partial Regularity for Degenerate Variational Problems and Image Restoration Models in BV}.
\newblock {\em Indiana Univ. Math. J. 63}, 1 (2014), 213--279.

\bibitem{Schmidt15}
{\sc Schmidt, T.}
\newblock {\em {BV Minimizers of Variational Integrals: Existence, Uniqueness, Regularity}}.
\newblock Habilitationsschrift (postdoctoral thesis), 2015.

\bibitem{Serrin59}
{\sc Serrin, J.}
\newblock {A new definition fo the integral for nonparametric problems in the calculus of variations}.
\newblock {\em Acta Math. 102\/} (1959), 23--32.

\bibitem{Stephan24}
{\sc Stephan, P.}
\newblock {Partial regularity for $\mathbb{A}$-quasiconvex functionals with Orlicz growth}, 2024.
\newblock ArXiv preprint. \href{https://arxiv.org/abs/2412.09478}{arXiv:2412.09478}.

\bibitem{Triebel}
{\sc Triebel, H.}
\newblock {\em {Theory of Function Spaces}}.
\newblock Modern Birkhäuser Classics. Springer Basel, Basel, 1983.

\end{thebibliography}
\end{document}